\def\today{\ifcase \month \or
   January \or February \or March \or April \or
   May \or June \or July \or August \or
   September \or October \or November \or December \fi
   \space\number\day , \number\year}
  \newcommand\@dotsep{4.5}
  \def\@tocline#1#2#3#4#5#6#7{\relax
     \ifnum #1>\c@tocdepth % then omit
     \else
     \par \addpenalty\@secpenalty\addvspace{#2}%
     \begingroup \hyphenpenalty\@M
     \@ifempty{#4}{%
     \@tempdima\csname r@tocindent\number#1\endcsname\relax
        }{%
         \@tempdima#4\relax
           }%
      \parindent\z@ \leftskip#3\relax \advance\leftskip\@tempdima\relax
      \rightskip\@pnumwidth plus1em \parfillskip-\@pnumwidth
       #5\leavevmode\hskip-\@tempdima #6\relax
       \leaders\hbox{$\m@th
       \mkern \@dotsep mu\hbox{.}\mkern \@dotsep mu$}\hfill
       \hbox to\@pnumwidth{\@tocpagenum{#7}}\par
       \nobreak
        \endgroup
         \fi}
\begin{document}

\makeatletter
\@addtoreset{figure}{section}
\def\thefigure{\thesection.\@arabic\c@figure}
\def\fps@figure{h,t}
\@addtoreset{table}{bsection}

\def\thetable{\thesection.\@arabic\c@table}
\def\fps@table{h, t}
\@addtoreset{equation}{section}
\def\theequation{%\thesection.
\arabic{equation}}
\makeatother

\newcommand{\bfi}{\bfseries\itshape}

\newtheorem{theorem}{Theorem}
\newtheorem{acknowledgment}[theorem]{Acknowledgment}
\newtheorem{algorithm}[theorem]{Algorithm}
\newtheorem{axiom}[theorem]{Axiom}
\newtheorem{case}[theorem]{Case}
\newtheorem{claim}[theorem]{Claim}
\newtheorem{conclusion}[theorem]{Conclusion}
\newtheorem{condition}[theorem]{Condition}
\newtheorem{conjecture}[theorem]{Conjecture}
\newtheorem{construction}[theorem]{Construction}
\newtheorem{corollary}[theorem]{Corollary}
\newtheorem{criterion}[theorem]{Criterion}
\newtheorem{data}[theorem]{Data}
\newtheorem{definition}[theorem]{Definition}
\newtheorem{example}[theorem]{Example}
\newtheorem{lemma}[theorem]{Lemma}
\newtheorem{notation}[theorem]{Notation}
\newtheorem{problem}[theorem]{Problem}
\newtheorem{proposition}[theorem]{Proposition}
\newtheorem{question}[theorem]{Question}
\newtheorem{remark}[theorem]{Remark}
\newtheorem{setting}[theorem]{Setting}
\numberwithin{theorem}{section}
\numberwithin{equation}{section}

%%% Todo
\newcommand{\todo}[1]{\vspace{5 mm}\par \noindent
\framebox{\begin{minipage}[c]{0.85 \textwidth}
\tt #1 \end{minipage}}\vspace{5 mm}\par}
%%%

\renewcommand{\1}{{\bf 1}}

\newcommand{\hotimes}{\widehat\otimes}

\newcommand{\Ad}{{\rm Ad}}
\newcommand{\Alt}{{\rm Alt}\,}
\newcommand{\Ci}{{\mathcal C}^\infty}
\newcommand{\comp}{\circ}
\newcommand{\wt}{\widetilde}

\newcommand{\C}{\mathbb C}%{\text{\bf C}}
\newcommand{\D}{\mathbb D}%{\text{\bf D}}
\newcommand{\Hb}{\text{\bf H}}
\newcommand{\N}{\mathbb N}%{\text{\bf N}}
\newcommand{\R}{\mathbb R}%{\text{\bf R}}
\newcommand{\T}{\mathbb T}%{\text{\bf T}}
\newcommand{\Ub}{\mathbb U}%{\text{\bf U}}

\newcommand{\ph}{\text{\bf P}}
\newcommand{\de}{{\rm d}}
\newcommand{\ev}{{\rm ev}}
\newcommand{\fimes}{\mathop{\times}\limits}
\newcommand{\id}{{\rm id}}
\newcommand{\ie}{{\rm i}}
\newcommand{\Cp}{\textbf {CPos}\,}
\newcommand{\End}{{\rm End}\,}
\newcommand{\Gr}{{\rm Gr}}
\newcommand{\GL}{{\rm GL}}
\newcommand{\Hilb}{{\bf Hilb}\,}
\newcommand{\Hom}{{\rm Hom}}
\renewcommand{\Im}{{\rm Im}\,}
\newcommand{\Ker}{{\rm Ker}\,}
\newcommand{\GLH}{\textbf{GrLHer}}
\newcommand{\HLH}{\textbf{HomogLHer}}
\newcommand{\LH}{\textbf{LHer}}
\newcommand{\Kern}{\textbf {Kern}}
\newcommand{\Lie}{\textbf{L}}
\newcommand{\lf}{{\rm l}}
\newcommand{\pr}{{\rm pr}}
\newcommand{\Ran}{{\rm Ran}\,}
\renewcommand{\Re}{{\rm Re}\,}

\newcommand{\RK}{{\mathcal P}{\mathcal K}^{-*}}
\newcommand{\spann}{{\rm span}}
\newcommand{\SLH}{\textbf {StLHer}}
\newcommand{\Rg}{\textbf {RepGLH}}
\newcommand{\Rep}{\textbf {sRep}}

\newcommand{\Tr}{{\rm Tr}\,}
\newcommand{\Tran}{\textbf{Trans}}

\newcommand{\G}{{\rm G}}
\newcommand{\U}{{\rm U}}
\newcommand{\Gl}{{\rm GL}}
\newcommand{\SL}{{\rm SL}}
\newcommand{\SU}{{\rm SU}}
\newcommand{\VB}{{\rm VB}}

\newcommand{\Bc}{{\mathcal B}}
\newcommand{\Cc}{{\mathcal C}}
\newcommand{\Dc}{{\mathcal D}}
\newcommand{\Ec}{{\mathcal E}}
\newcommand{\Fc}{{\mathcal F}}
\newcommand{\Gc}{{\mathcal G}}
\newcommand{\Hc}{{\mathcal H}}
\newcommand{\Kc}{{\mathcal K}}
\newcommand{\Nc}{{\mathcal N}}
\newcommand{\Oc}{{\mathcal O}}
\newcommand{\Pc}{{\mathcal P}}
\newcommand{\Qc}{{\mathcal Q}}
\newcommand{\Rc}{{\mathcal R}}
\newcommand{\Sc}{{\mathcal S}}
\newcommand{\Tc}{{\mathcal T}}
\newcommand{\Uc}{{\mathcal U}}
\newcommand{\Vc}{{\mathcal V}}
\newcommand{\Xc}{{\mathcal X}}
\newcommand{\Yc}{{\mathcal Y}}
\newcommand{\Zc}{{\mathcal Z}}
\newcommand{\Ag}{{\mathfrak A}}
\renewcommand{\gg}{{\mathfrak g}}
\newcommand{\hg}{{\mathfrak h}}
\newcommand{\mg}{{\mathfrak m}}
\newcommand{\nng}{{\mathfrak n}}
\newcommand{\pg}{{\mathfrak p}}
\newcommand{\ug}{{\mathfrak u}}
\newcommand{\Gg}{{\mathfrak g}}
\newcommand{\Sg}{{\mathfrak S}}
\newcommand{\Ug}{{\mathfrak u}}

\markboth{}{}

\makeatletter
\title[Linear connections on vector bundles]
{Linear connections for reproducing kernels on vector bundles}
\author{Daniel Belti\c t\u a and  Jos\'e E. Gal\'e}
\address{Institute of Mathematics ``Simion
Stoilow'' of the Romanian Academy, Research Unit 1, 
P.O. Box 1-764, Bucharest, Romania}
\email{beltita@gmail.com, Daniel.Beltita@imar.ro}
\address{Departamento de matem\'aticas and I.U.M.A.,
Universidad de Zaragoza, 50009 Zaragoza, Spain}
\email{gale@unizar.es}
\thanks{This research was partly  supported by Project MTM2010-16679, DGI-FEDER, of the MCYT, Spain. 
The first-named author has also been supported by a Grant of the Romanian National Authority for Scientific Research, CNCS-UEFISCDI, project number PN-II-ID-PCE-2011-3-0131. 
The second-named author has also been supported by Project E-64, D.G. Arag\'on, Spain.}
\dedicatory{18 July 2013}%{\bfi BG4\_18july2013.tex}

\keywords{tautological bundle; Grassmann manifold; reproducing kernel; classifying morphism; connection; covariant derivative}
\subjclass[2010] {Primary 46E22; Secondary 22E66, 47B32, 46L05, 18A05}
\makeatother

\begin{abstract}
We construct a canonical correspondence from a wide class of reproducing kernels 
on infinite-dimensional Hermitian vector bundles to linear connections on these bundles. 
The linear connection in question is obtained through a pull-back operation involving the tautological universal bundle and the classifying morphism of the input kernel. 
The aforementioned correspondence turns out to be a canonical functor between categories of kernels and linear connections. 
A number of examples of linear connections including the ones associated to classical kernels, homogeneous reproducing kernels and kernels occurring in the dilation theory for completely positive maps are given, together with their covariant derivatives.
\end{abstract}

\maketitle

\tableofcontents

%%%%%%%%%%%%%%%%%%%%%%%%%%%%%%%%
%%%%%%%%%%%%%%%%%%%%%%%%%%%%%%%%
\section{Introduction}

The theory of reproducing kernels and of their applications to the study of Lie group representations 
has undergone an impressive development over the years; 
see for instance the excellent monograph \cite{Ne00} and the references therein. 
The questions addressed in the present paper belong to the apparently not yet explored 
differential geometric aspects of this theory. 
More specifically we show that, under mild assumptions, 
the smooth reproducing kernels on an infinite-dimensional Hermitian vector bundle 
give rise to linear connections on that bundle, and this correspondence sets up 
a functor between suitably defined categories of reproducing kernels and linear connections, respectively. 
This functor also turns out to be canonical in some sense  
(Theorem~\ref{canonical}). 
In the case of the tautological vector bundle over the Grassmann manifold associated to a complex Hilbert space, 
the universal connection corresponds to the so-called universal reproducing kernel 
that we pointed out in the earlier paper \cite{BG11}. 
We also discuss a number of specific examples including the classical Hardy and Bergman kernels 
and others on infinite-dimensional manifolds.

The circle of ideas approached here is motivated by the interest in understanding 
certain physical models (\cite{Od88}, \cite{Od92}) 
as well as the geometric realizations 
for certain representations of groups of invertible elements in $C^*$-algebras; 
see for instance \cite{BR07}, \cite{BG08}, \cite{BG09} or \cite{Ne12}. 
Such realizations, of Borel-Weil type, were constructed by using suitable reproducing kernels 
on homogeneous vector bundles and a rich panel of differential geometric features of operator algebras   
turned out on this occasion. 
The ideas were developed in a categorial framework in \cite{BG11}, 
where the geometric features of reproducing kernels have been reinforced in relation with 
the geometry of tautological vector bundles taken as universal objects. 
We were thus naturally led to investigating the differential geometric features of reproducing kernels. 
The geometric significance of such kernels had been also pointed out for instance in \cite{BH98}  
and more recently, in the very fine paper \cite{Hi08}.

Section~\ref{Sect3} is devoted to the reductive structures in the framework of Banach-Lie groups. 
We briefly discuss here the linear connections induced by the reductive structures and 
we then present some examples related to the $C^*$-algebras 
and which are important for producing geometric realizations for 
representations of certain Banach-Lie groups. 
The reproducing kernels on Banach vector bundles that we deal with in this paper 
are discussed in Section~\ref{Sect4}. 
Our main constructions of linear connections out of reproducing kernels 
are presented in Section~\ref{sectconnect}. 
We compute their covariant derivative in terms of the input reproducing kernel 
(Theorem~\ref{deriv_prop3}), 
and we also study their functorial properties (Theorem~\ref{canonical}). 
And finally, a panel of significant examples of reproducing kernels is discussed in Section~\ref{examples}. 
Namely, we look at the usual type of operator-valued reproducing kernels, 
including the classical reproducing kernels of Hardy or Bergmann type, 
the homogeneous reproducing kernels that we earlier used 
in the geometric representation theory of Banach-Lie groups, 
or the kernels that are implicit in the dilation theory of completely positive maps on $C^*$-algebras. 
In Appendix~\ref{Sect2} we provide some auxiliary properties of connections on Banach bundles  
and we emphasize the operation of pull-back, which plays a key role throughout this paper. 

Let us mention that in the present paper we only show how to define the linear connection induced by a reproducing kernel 
and study the very basic properties of the correspondence between these two types of objects. 
Topics like the deep significance of such connections for the complex structures or 
$C^*$-Hermitian structures on infinite-dimensional vector bundles, the analysis of the linear connections associated with reproducing kernels arising in representations of semisimple Lie groups in function spaces, or applications to Cowen-Douglas operators
will be treated in forthcoming papers.

%%%%%%%%%%%%%%%%%%%%%%%%%%
\section{Reductive structures for Banach-Lie groups}\label{Sect3}
%%%%%%%%%%%%%%%%%%%
%%%%%%%%%%%%%%%%%

In this section we introduce the abstract notion and example which, as inherent in universal bundles, 
will enable us in Section~\ref{sectconnect} to define connections associated with reproducing kernels on general vector bundles.

%%%%%%%%
\subsection{Linear connections induced by reductive structures}
%%%%%%%%%%
We first make the definition of the reductive structures we are interested in.  
Several versions of this notion showed up in the literature of differential geometry in infinite dimensions; 
see for instance \cite{MR92}, \cite{ACS95}, \cite{CG99}, and the references therein. 
Nevertheless it seems to us that, 
maybe due to the fact that the existing literature was largely motivated by problems involving operator algebras, 
one considered mainly reductive structures on homogeneous spaces of groups of invertible elements 
in unital associative Banach algebras. 
See however \cite[Def. 4.1]{Ne02} for the related general notion of a normed symmetric Lie algebra. 
We next introduce the reductive structures on the natural level of generality, 
which does not require Banach algebras or $C^*$-algebras but rather Banach-Lie groups. 

\begin{definition}\label{redestruc}
\normalfont
A {\it reductive structure} is a triple $(G_A,G_B;E)$ where $G_A$ is a real Banach-Lie group with Lie algebra $\Gg_A$, $G_B$ is a Banach-Lie subgroup of $G_A$ with Lie algebra $\Gg_B$,  and  $E\colon\Gg_A\to\Gg_A$ is a continuous linear map with the following properties:
 $E\circ E=E$; $\Ran E=\Gg_B$; and for every $g\in G_B$ the diagram
$$
\begin{CD}
\Gg_A @>{\Ad_{G_A}(g)}>> \Gg_A \\
@V{E}VV @VV{E}V \\
\Gg_A @>{\Ad_{G_A}(g)}>> \Gg_A
\end{CD}
$$
is commutative.

A {\it morphism of reductive structures} from $(G_A,G_B;E)$ to $(\wt G_A,\wt G_B;\wt E)$ is a homomorphism of Banach-Lie groups
$\alpha\colon G_A\to\wt G_A$ such that $\alpha(G_B)\subseteq\wt G_B$ and the diagram
$$
\begin{CD}
\Gg_A @>{d\alpha}>> \wt\Gg_A \\
@V{E}VV @VV{\wt E}V \\
\Gg_A @>{d\alpha}>> \wt\Gg_A
\end{CD}
$$
is commutative. 
For instance, a family of automorphisms of any reductive structure 
$(G_A,G_B;E)$ is provided by 
$\alpha_g\colon x\mapsto gxg^{-1}$, $G_A\to G_A$ ($g\in G_B$). 
\end{definition}

We will see in Theorem~\ref{conred} below that 
if $\rho$ is a uniformly continuous representation from $G_B$ on a Hilbert space $\Hc_B$,  
then any reductive structure for $G_A$ and $G_B$ as above gives rise to a connection 
on the homogeneous vector bundle $\Pi:D=\G_A\times_{G_B}\Hc_B\to G_A/G_B$ induced by $\rho$. 
Recall that 
$\G_A\times_{G_B}\Hc_B$ is the cartesian product $\G_A\times\Hc_B$ quotient the equivalence relation
$$
(g,h)\sim(g',h')\hbox{ iff\, there\, exists } w\in G_B \hbox{ such\ that } g'=gw, h'=\rho(w^{-1})h,
$$ 
endowed with its canonical structure of Banach manifold; see \cite{KM97a}. 
In order to make the statement, we first note that, on account of Remark~\ref{conninduc},  
the tangent bundle $\tau_D\colon TD\to D$ can be described as the mapping
$$
\tau_D\colon
(G_A\ltimes_{\Ad_{G_A}}\Gg_A)\times_{(G_B\ltimes_{\Ad_{G_B}}\Gg_B)}
(\Hc_B\oplus\Hc_B)\to G_A\times_{G_B}\Hc_B
$$
given by $ [((g,X),(f,h))]\mapsto [(g,f)]$.

\begin{theorem}\label{conred}
Let $(G_A,G_B,E)$ be a reductive structure and 
$\rho\colon G_B\to\Bc(\Hc_B)$ be   
a uniformly continuous representation. 
Then the homogeneous vector bundle
$\Pi\colon D=G_A\times_{G_B}\Hc_B\to G_A/G_B$ has a linear connection 
$\Phi_E\colon TD\to TD$ given by 
$$
%\begin{aligned}\Phi_E\colon
[((g,X),(f,h))]%&
\mapsto[((g,E(X)),(f,h))]
%\\&
=[((g,0),(f,d\rho(E(X))f+h))].
%\end{aligned}
$$
\end{theorem}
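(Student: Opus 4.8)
The plan is to show that the stated formula descends to a well-defined smooth endomorphism $\Phi_E$ of $TD$ covering $\id_D$, and that this endomorphism is a vertical projection which is linear with respect to the vector bundle structure, i.e.\ a linear connection in the sense of Appendix~\ref{Sect2}. Throughout I work with the description of $\tau_D\colon TD\to D$ recalled just before the statement (Remark~\ref{conninduc}): a point of $TD$ is a class $[((g,X),(f,h))]$ for the diagonal action of the tangent group $G_B\ltimes_{\Ad_{G_B}}\Gg_B$ on $(G_A\ltimes_{\Ad_{G_A}}\Gg_A)\times(\Hc_B\oplus\Hc_B)$, and $\Phi_E$ is to be induced by the total-space map $((g,X),(f,h))\mapsto((g,E(X)),(f,h))$.

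First I would write this action out explicitly by taking the tangent lift of the $G_B$-action $(g,f)\mapsto(gw,\rho(w^{-1})f)$. A direct computation shows that $(w,Y)\in G_B\ltimes_{\Ad_{G_B}}\Gg_B$ sends $((g,X),(f,h))$ to $((gw,\Ad(w^{-1})X+Y),(\rho(w^{-1})f,\,\rho(w^{-1})h-d\rho(Y)\rho(w^{-1})f))$. The only features I need from this are that the $\Gg_A$-component transforms by $X\mapsto\Ad(w^{-1})X+Y$ with $w\in G_B$ and $Y\in\Gg_B$, whereas the $\Hc_B\oplus\Hc_B$-component does not involve $X$ at all.

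The key step, and the one I expect to be the main obstacle, is well-definedness. Applying $E$ to the transformed $\Gg_A$-component yields $E(\Ad(w^{-1})X+Y)=\Ad(w^{-1})E(X)+Y$: here I use the commuting diagram of Definition~\ref{redestruc}, so that $E\circ\Ad(w)=\Ad(w)\circ E$ for every $w\in G_B$, together with $\Ran E=\Gg_B$, which forces $E(Y)=Y$ for $Y\in\Gg_B$. Thus $E(X)$ transforms under $(w,Y)$ exactly as $X$ does, and since replacing $X$ by $E(X)$ leaves the $\Hc_B$-data untouched, the assignment $[((g,X),(f,h))]\mapsto[((g,E(X)),(f,h))]$ is independent of the chosen representative. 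The second expression for $\Phi_E$ in the statement is simply the value of this same action at $(w,Y)=(e,-E(X))$, which is admissible because $-E(X)\in\Gg_B$; this identity also exhibits the image of $\Phi_E$ as a vertical vector. This descent argument is the only place where the axioms of a reductive structure are genuinely used.

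It then remains to record the routine points. The total-space map is smooth because $E$ is continuous and linear, and it descends to a smooth map on the quotient since the quotient projections are submersions; moreover $\tau_D\circ\Phi_E=\tau_D$, so $\Phi_E$ covers $\id_D$. From $E\circ E=E$ one gets $\Phi_E\circ\Phi_E=\Phi_E$, and from $\Ran E=\Gg_B$ one identifies $\Ran\Phi_E$ with the vertical bundle $\Ker d\Pi=\{[((g,X),(f,h))]:X\in\Gg_B\}$, on which $E$ restricts to the identity; hence $\Phi_E$ is a projection onto the vertical bundle. Finally, in the explicit form $[((g,X),(f,h))]\mapsto[((g,0),(f,d\rho(E(X))f+h))]$ the vertical value $d\rho(E(X))f+h$ depends linearly on the fibre variables $(f,h)$, so $\Phi_E$ commutes with the fibrewise scalar multiplications and additions of the vector bundle $D\to G_A/G_B$, which is precisely the linearity required of a connection in Appendix~\ref{Sect2}.
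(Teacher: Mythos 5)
Your proposal is correct and follows essentially the same route as the paper's proof: the equality of the two expressions is obtained by acting with $(\1,-E(X))$, well-definedness reduces to $E(\Ad(w^{-1})X+Y)=\Ad(w^{-1})E(X)+Y$ via the $\Ad(G_B)$-equivariance of $E$ (plus $E|_{\Gg_B}=\id$), and smoothness follows from the quotient map being a submersion. You merely spell out more explicitly the verification of the connection axioms, which the paper dismisses as "readily checked."
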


\begin{proof}
First we check that the equality in the image of $\Phi$ holds. Take an arbitrary element $(u,Y)\in G_B\ltimes_{\Ad_{G_B}}\Gg_B$. 
Since 
$$
d\rho(Y)=\rho(u^{-1})d\rho(\Ad_{G_B}(u)Y)\rho(u),
$$
it follows from the matrix expression of $T\rho(u,Y)$ in Remark \ref{conninduc}
that 
$$
T\rho(u,Y)^{-1} 
=\begin{pmatrix}\rho(u^{-1})& 0 \cr 
-\rho(u^{-1}) d\rho(\Ad_{G_B}(u)Y) &\rho(u^{-1})\end{pmatrix},
$$
for every $u\in G_B$ and $Y\in\Gg_B$. Hence, 
if $(g,X)\in G_A\ltimes_{\Ad_{G_A}}\Gg_A$ we have 
$$
(g,E(X))\cdot(u,Y)=(g\,u,\Ad_{G_B}(u^{-1})E(X)+Y)
$$
and
$$
T\rho(u,Y)^{-1}\cdot(f,h)
=(\rho(u^{-1})f,-\rho(u^{-1})d\rho(\Ad_{G_B}(u)Y)f+
\rho(u^{-1})h).
$$
Then the equality of equivalence classes in the definition of $\Phi_E$ follows just taking $u=\1_B$ and $Y=-E(X)$.

Analogously, it is not difficult to check that the mapping $\Phi_E$ is well defined. 
In effect, for $(g,X)$ and $(u,Y)$ as above we have
$$
\begin{aligned}
((g\,u,E(\Ad(u^{-1})X)+Y)&,(T\rho)(u,Y)^{-1}(f,h))\\
&=((g\,u,\Ad(u^{-1})E(X)+Y),(T\rho)(u,Y)^{-1}(f,h))\\
&=((g,E(X))\cdot (u,Y),(T\rho)(u,Y)^{-1}(f,h))\\
&\sim ((a,E(X)),(f,h)),
\end{aligned}
$$
where we have used in the first equality the commutativity of the diagram in Definition~\ref{redestruc}.

The fact that $\Phi$ is smooth follows since 
$(g,X,f,h)\mapsto(g,E(X),f,h)$ is a smooth map on 
$G_A\times \Gg_A\times\Hc_B\times\Hc_B$, and the corresponding quotient map
$$(G_A\times \Gg_A)\times(\Hc_B\times\Hc_B)\to 
(G_A\ltimes_{\Ad}\Gg_A)\times_{(G_B\ltimes_{\Ad}\Gg_B)}(\Hc_B\oplus\Hc_B)=TD$$ 
is a submersion (see e.g., \cite[Th.~37.12]{KM97a}).

Finally, the connection properties are readily checked.
\end{proof}

\begin{definition}\label{defconind}
\normalfont
The connection $\Phi_E$ constructed in Theorem~\ref{conred} will be called the 
{\it linear connection induced by the reductive structure} $(G_A,G_B;E)$.
\end{definition}

\begin{remark}\label{prinandvert}
\normalfont
In the definition of the connection $\Phi_E$, the expression 
$$
\Phi_E\left([((g,X),(f,h))]\right)=[((g,E(X)),(f,h))]
$$ 
reflects the fact that $\Phi_E$ is a linear connection on the 
$\Pi\colon G_A\times_{G_B}\Hc_B\to G_A/G_B$ induced by the principal connection $E$ on the principal bundle $G_A\to G_A/G_B$. Complementarily, the presentation 
$$
\Phi_E\left([((g,X),(f,h))]\right)=[((g,0),(f,d\rho(E(X))f+h))]
$$ 
emphasizes on the fact that the range of the connection $\Phi_E$ lies in the vertical subbundle of 
$T(G_A\times_{G_B}\Hc_B)$.
\end{remark}

We will now show that the reductive structures and pull-backs of connections are compatible, 
in the sense that connections induced by reductive structures are invariant under the pull-back action. 
Specifically, let $\alpha\colon G_A\to\wt G_A$ be a morphism of reductive structures from
$(G_A,G_B,E)$ to $(\wt G_A,\wt G_B,\wt E)$. 
Let $\wt\rho_B\colon \wt G_B\to\Bc(\Hc_B)$ a uniformly continuous representation and define  $\rho_B:=\wt\rho_B\circ\alpha|_{G_B}$. 
Thus we can construct the homogeneous vector bundles
$\Pi\colon D=G_A\times_{G_B}\Hc_B\to G_A/G_B$ and
$\wt\Pi\colon \wt D=\wt G_A\times_{\wt G_B}\Hc_B\to \wt G_A/\wt G_B$ carrying the linear connections $\Phi_E$ and $\wt\Phi_{\wt E}$, respectively, induced by the corresponding reductive structures (see Definition~\ref{defconind}).

Set $\Theta=(\delta,\zeta)$ where
$$
\zeta\colon gG_B\mapsto \alpha(g)\wt G_B,\ G_A/G_B\to \wt G_A/\wt G_B
$$
and
$$
\delta\colon[(g,f)]\mapsto [( \alpha(g),f)],\   G_A\times_{G_B}\Hc_B
\to\wt G_A\times_{\wt G_B}\Hc_B.
$$
It is readily seen that the pair $\Theta=(\delta,\zeta)$ is a morphism of the bundle $\Pi$ into $\wt\Pi$. 
Let $\Theta^*(\wt\Phi_{\wt E})$ denote the pull-back of the connection $\wt\Phi_{\wt E}$ through $\Theta$, 
in accordance with Definition \ref{pullcon}.

\begin{proposition}\label{redumorf} 
In the above setting, $\Theta^*(\wt\Phi_{\wt E})=\Phi_E$.
\end{proposition}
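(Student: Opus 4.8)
The plan is to reduce the identity $\Theta^*(\wt\Phi_{\wt E})=\Phi_E$ to the single algebraic fact that $\alpha$ intertwines the two projections, namely $\wt E\comp d\alpha=d\alpha\comp E$, which is precisely the content of $\alpha$ being a morphism of reductive structures (Definition~\ref{redestruc}). First I would record that, because $\delta$ acts as the identity on the typical fibre $\Hc_B$ in the natural trivializations, the bundle morphism $\delta$ restricts to a linear isomorphism on each fibre and covers $\zeta$. Using the description of $TD$ as the associated bundle recalled before Theorem~\ref{conred}, I would then compute the tangent map $T\delta\colon TD\to T\wt D$ in these coordinates; since $\delta$ is induced by $\alpha$ on the $G_A$-factor and leaves the fibre coordinates untouched, it reads
$$
T\delta\colon [((g,X),(f,h))]\mapsto [((\alpha(g),d\alpha(X)),(f,h))].
$$
As in Theorem~\ref{conred}, the only nontrivial point here is that this formula is independent of the chosen representative, which follows from $\alpha(G_B)\subseteq\wt G_B$, the compatibility $d\alpha\comp\Ad_{G_B}(w)=\Ad_{\wt G_B}(\alpha(w))\comp d\alpha$, and $\rho_B=\wt\rho_B\comp\alpha|_{G_B}$.

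Next I would unwind Definition~\ref{pullcon} in the present fibrewise-isomorphism situation. Since $T\delta$ maps each vertical subspace $V_{[(g,f)]}D$ isomorphically onto $V_{\delta([(g,f)])}\wt D$, the pull-back connection is the unique linear connection on $D$ whose vertical projection is
$$
\Theta^*(\wt\Phi_{\wt E})=(T\delta|_{V})^{-1}\comp\wt\Phi_{\wt E}\comp T\delta,
$$
equivalently, the connection whose horizontal subbundle is the $(T\delta)^{-1}$-preimage of the horizontal subbundle of $\wt\Phi_{\wt E}$. It therefore suffices to evaluate the right-hand side on an arbitrary vector $v=[((g,X),(f,h))]$ and to compare the outcome with the expression for $\Phi_E$ furnished by Remark~\ref{prinandvert}.

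The heart of the argument is then a short computation. Applying $T\delta$ and the second (vertical) presentation of $\wt\Phi_{\wt E}$ from Remark~\ref{prinandvert} gives
$$
\wt\Phi_{\wt E}(T\delta(v))=[((\alpha(g),0),(f,d\wt\rho_B(\wt E(d\alpha(X)))f+h))].
$$
Here I would insert the two compatibility identities: $\wt E(d\alpha(X))=d\alpha(E(X))$, which is the commuting diagram of Definition~\ref{redestruc}, and $d\wt\rho_B(d\alpha(Y))=d\rho_B(Y)$ for $Y\in\Gg_B$, which holds by differentiating $\rho_B=\wt\rho_B\comp\alpha|_{G_B}$; applied to $Y=E(X)\in\Ran E=\Gg_B$ they turn the fibre component into $d\rho_B(E(X))f+h$. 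Applying $(T\delta|_{V})^{-1}$, which merely replaces $\alpha(g)$ by $g$ in a vertical vector, yields
$$
\bigl(\Theta^*(\wt\Phi_{\wt E})\bigr)(v)=[((g,0),(f,d\rho_B(E(X))f+h))]=\Phi_E(v),
$$
the last equality being exactly the vertical presentation of $\Phi_E$ in Remark~\ref{prinandvert}. As $v$ was arbitrary, this proves the proposition.

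I expect the only genuine obstacle to be the bookkeeping surrounding $T\delta$: writing the tangent map correctly in the associated-bundle coordinates and checking that both it and the fibrewise inverse $(T\delta|_{V})^{-1}$ descend to the equivalence classes. Once this is settled, and once Definition~\ref{pullcon} is matched with the fibrewise-isomorphism formula above, the geometric content collapses entirely to the intertwining relation $\wt E\comp d\alpha=d\alpha\comp E$ built into the notion of a morphism of reductive structures, so that the final verification is immediate.
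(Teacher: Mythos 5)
Your proof is correct and follows essentially the same route as the paper: both compute $T\delta$ in associated-bundle coordinates and reduce everything to the intertwining relation $\wt E\circ d\alpha=d\alpha\circ E$ from Definition~\ref{redestruc}. The only (cosmetic) difference is that you verify the pull-back identity through the vertical presentation of Remark~\ref{prinandvert}, which costs you the extra but immediate identity $d\wt\rho_B\circ d\alpha\vert_{\Gg_B}=d\rho_B$ coming from $\rho_B=\wt\rho_B\circ\alpha\vert_{G_B}$, whereas the paper checks $T\delta\circ\Phi_E=\wt\Phi_{\wt E}\circ T\delta$ directly on the first presentation.
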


\begin{proof}
The tangent map $T\delta\colon TD\to T\wt D$ is given by
$$
T\delta\colon[((g,X),(f,h))]\mapsto[((\alpha(g),d\alpha(X)),(f,h))],\ TD\to T\wt D.
$$
Therefore, by Definition~\ref{conred} we get
\allowdisplaybreaks
\begin{align}
((T\delta)\circ\Phi_E)([((g,X),(f,h))])
&=T\delta\left([((g,E(X)),(f,h))]\right) \nonumber\\
&=[((\alpha(g),d\alpha(E(X))),(f,h))] \nonumber\\
&=
[((\alpha(g),\wt E(d\alpha(X))),(f,h))] \nonumber\\
&=\wt\Phi_{\wt E}\left([((\alpha(g),d\alpha(X)),(f,h))] \right) \nonumber\\
&=(\wt\Phi_{\wt E}\circ T\delta)\left([((g,X),(f,h))]\right), \nonumber
\end{align}
for every $g\in G_A$, $X\in\Gg_A$, $f,h\in\Hc_B$,
where the third equality follows since $\alpha$ is a morphism of reductive structures. Thus $\Theta^*(\wt\Phi_{\wt E})=\Phi_E$ by Definition~\ref{pullcon}.
\end{proof}

In the following remark we sketch a method for computing the covariant derivative 
for the linear connection induced by a reductive structure 
in the particular case when the construction of the homogeneous vector bundle 
involves the restriction 
of a representation of the larger group.  
This computation in the finite-dimensional situation can be found in~\cite{BR90}. 
In the special case when $G_A$ is the group of invertible elements of some associative Banach algebra, 
the Maurer-Cartan form introduced below was also constructed in \cite[subsect. 3.1]{MR92}. 

\begin{remark}\label{MC_rem}
\normalfont
Let $(G_A,G_B;E)$ be a reductive structure and denote $\mg=\Ker E$, so that $\Gg_A=\Gg_B\dotplus\mg$ 
and $\Ad_{G_A}(G_B)\mg=\mg$. 

\smallbreak

\noindent (1) 
There exists a natural isomorphism of vector bundles $T(G_A/G_B)\simeq G_A\times_{G_B}\mg$ 
(see for instance the proof of \cite[Cor. 5.5]{BG08}), 
where the latter homogeneous bundle is defined by using the adjoint action of $G_B$ on $\mg$. 
It follows that the function 
$$\beta\colon G_A\times_{G_B}\mg\to\Gg_A,\quad  \beta([(g,X)])=\Ad_{G_A}(g)X$$
can be thought of as a $\Gg_A$-valued differential 1-form $\beta\in\Omega^1(G_A/G_B,\Gg_A)$, 
to be called the \emph{Maurer-Cartan form} of the reductive structure under consideration. 
This vector-valued differential form essentially comes from an embedding of the tangent vector bundle 
$T(G_A/G_B)$ into the trivial vector bundle $(G_A/G_B)\times\Gg_A$ over $G_A/G_B$. 
Specifically, we have $T(G_A/G_B)\simeq G_A\times_{G_B}\mg\hookrightarrow G_A\times_{G_B}\Gg_A$ 
and also the $G_A$-equivariant trivialization of vector bundles 
$G_A\times_{G_B}\Gg_A\simeq(G_A/G_B)\times\Gg_A$, $[(g,X)]\mapsto (gG_B,\Ad_{G_A}(g)X)$. 

\smallbreak

\noindent (2) 
A similar trivialization can be set up  
for any homogeneous vector bundle whose construction involves the restriction 
of a representation of the larger group. 
More precisely, if $\rho\colon G_A\to\Bc(\textbf{E})$ is a uniformly continuous representation 
on some Banach space $\textbf{E}$, then we have the (inverse to each other) 
isomorphisms of vector bundles over $G_A/G_B$, 
$$
G_A\times_{G_B}\textbf{E}\simeq (G_A/G_B)\times\textbf{E}
$$
given by $[(g,v)]\mapsto(gG_B,\rho(g)v)$ and $(gG_B,v)\mapsto[(g,\rho(g)^{-1}v)]$, respectively.  
Since there exist one-to-one correspondences between the smooth $\textbf{E}$-valued functions or differential forms on $G_A/G_B$ 
and the sections or differential forms with values in the trivial vector bundle 
$(G_A/G_B)\times\textbf{E}\to G_A/G_B$, 
we can use the above isomorphisms of vector bundles in order to define the covariant derivative of $\textbf{E}$-valued functions 
\begin{equation}\label{MC_rem_eq1}
\nabla\colon\Ci(G_A/G_B,\textbf{E})\to\Omega^1(G_A/G_B,\textbf{E})
\end{equation}
as the covariant derivative induced by the reductive structure $(G_A,G_B;E)$ and the representation $\rho$.

\smallbreak

\noindent (3) 
For every $F\in\Ci(G_A/G_B,\textbf{E})$ we denote by $(\de\rho\circ\beta).F\in\Omega^1(G_A/G_B,\textbf{E})$ 
the differential form defined for every $z\in G_A/G_B$ and $X\in T_z(G_A/G_B)$ by 
$$
((\de\rho\circ\beta).F)(X)=(\de\rho(\beta_z(X)))((z)),
$$ 
where $\beta_z:=\beta\vert_{T_z(G_A/G_B)}\colon T_z(G_A/G_B)\to\Gg_A$ 
and $\de\rho\colon\Gg_A\to\Bc(\textbf{E})$ is the derived representation. 
The method of proof of \cite[Prop. 1.1]{BR90}, 
which extends directly to the present infinite-dimensional case 
(see also \cite[Th. 37.23(9) and Th. 37.30--31]{KM97a}), 
leads to the following conclusion.
If $\rho\colon G_A\to\Bc(\textbf{E})$ is a representation as above, 
then the covariant derivative \eqref{MC_rem_eq1} 
can be computed for every $F\in\Ci(G_A/G_B,\textbf{E})$ by the formula 
$\nabla F=\de F-(\de\rho\circ\beta).F$, 
that is, 
$$
(\nabla F)([(g,X)])=(\de F)([(g,X)])
-\rho(g)\de\rho(X)\rho(g)^{-1}F(gG_B)
$$
for all $g\in G_A$ and $X\in\mg$, 
which follows by also using the expression of $\beta$ given in  
item~(1) above and the fact that 
$d\rho(\rm{Ad}_{G_A}(g)X)=\rho(g)\de\rho(X)\rho(g)^{-1}$ for all $g\in G_A$, $X\in\Gg_A$.   
\end{remark}

%%%%%%%%%%%%%%%%%
\subsection{Some reductive structures related to $C^*$-algebras}
%%%%%%%%%%%

Next we give some key examples of reductive structures and morphisms between them.

\begin{example}[Lie group representations]\label{ex1}
\normalfont
%{\it Lie group representations}. 
Let $(G_A,G_B;E)$ be a reductive structure and let 
$\rho_A\colon G_A\to \Bc(\Hc_A)$ be a uniformly continuous unitary representation such that 
$\rho_{A}|_{G_B}$ has a non-trivial 
invariant closed subspace $\Hc_B\subseteq\Hc_A$. Denote
$\rho_B(g):=\rho_A(g)|_{\Hc_B}$ for every
$g\in G_B$ and define
\begin{itemize}
\item $\widetilde G_A=\U(\Hc_A)$, the unitary operators on $\Hc_A$; 

\item $\widetilde G_B=\U(\Hc_A)\cap\{p\}'$, the subgroup of $\U(\Hc_A)$ formed by the operators commuting with the orthogonal 
projection $p$ on $\Hc_B$ (that is, the operators that leave $\Hc_B$ invariant);

\item $\widetilde E\colon \widetilde\Gg_A=\ug(\Hc_A)\to\widetilde\Gg_B= \ug(\Hc_A)\cap\{p\}'$, where 
$\ug(\Hc_A)=\{X\in\Bc(\Hc_A): X^*=-X\}$ and 
for $X\in\Bc(\Hc_A)$  we take
$\widetilde E(X):=pXp+(\1-p)X(\1-p)$.
\end{itemize}
%If we {\it assume} that $\wt E\circ d\rho_A=d\rho_B\circ E$
Then the mapping $\rho_A\colon G_A\to\widetilde G_A$ is a morphism of reductive structures from $(G_A,G_B;E)$ to
$(\widetilde G_A, \widetilde G_B; \widetilde E)$.
\end{example}

\begin{example}[conditional expectations on $C^*$-algebras]\label{ex2}
\normalfont
%{\it Conditional expectations on $C^*$-algebras}. 

Let $A$ be a unital $C^*$-algebra with a unital $C^*$-subalgebra $B$ 
for which there exists a conditional expectation $E\colon A\to B$. 
This means that $E$ is a linear projection on $A$ with $\Ran E=B$ and norm one. 
By Tomiyama's theorem we have moreover that
$$
E(b_1ab_2)=b_1E(a)b_2 \text{ and }E(b^*)=E(b)^*\quad (a\in A; b_1, b_2\in B),
$$
and additionally $E(\1_A)=\1_B\left(=\1_A\right)$. 
Let $\G_\Lambda$ denote for $\Lambda\in\{A,B\}$ the Banach-Lie group of invertibles in $\Lambda$ 
endowed with its norm topology. 
Then the Lie algebra of $\G_\Lambda$ is $\Gg_\Lambda=\Lambda$, with the element $X$ of $\Gg_\Lambda$ 
obtained by derivation of the path $e^{tX}$ at $t=0$.   
Since in this $C^*$ case we have that
$Ad(g)a=gag^{-1}$ for every $g\in \G_A$ and $a\in A$,
the expectation $E$ satisfies the conditions of Definition \ref{redestruc}, 
so that $(\G_A,\G_B;E)$ is a reductive structure.

If for two triples $(A,B;E)$, $(\widetilde A,\widetilde B;\widetilde E)$ as above 
we also have a bounded $\ast$-homomorphism
$\phi\colon A\to \widetilde A$ satisfying $\phi\circ E= \widetilde E\circ\phi$ 
then $\alpha:=\phi|_{\G_A}$ defines a morphism 
between the reductive structures $(\G_A,\G_B;E)$ and 
$(\G_{\widetilde A},\G_{\widetilde B};\widetilde E)$. 

The reductive structures $(\U_A,\U_B;E\vert_{\ug_A})$ and 
$(\U_{\widetilde A},\U_{\widetilde B};\widetilde E\vert_{\ug_{\widetilde A}})$ 
defined by the unitary groups have a similar property, 
where we recall that the unitary group $\U_A=\{u\in A\mid uu^*=u^*u=\1\}$ 
is a Banach-Lie group whose Lie algebra is $\ug_A=\{x\in A\mid x^*=-x\}$, 
and similarly for the other unitary groups involved here. 
\end{example}

\begin{example}[completely positive maps]\label{ex3}
\normalfont
%{\it Completely positive maps}. 
Let $A$ be a unital $C^*$-algebra and $\Hc_0$ be a complex Hilbert space. 
Recall that a unital linear mapping $\Psi\colon A\to\Bc(\Hc_0)$ is said to be completely positive if 
the linear mapping 
$$
\Psi_n:=\Psi\otimes\id_{M_n(\C)}\colon M_n(A)=A\otimes M_n(\C)\to M_n(\Bc(\Hc_0))
$$
is positive (i.e., it maps positive elements to positive elements) for all $n\ge1$. 
It is well known that every conditional expectation is a completely positive map.

By the Stinespring dilation procedure, 
for a given completely positive map $\Psi\colon A\to\Bc(\Hc_0)$, there are a Hilbert space $\Hc$, an isometry $V\colon \Hc\to\Hc_0$ and a unital $*$-representation of $C^*$-algebras $\lambda\colon A\to\Bc(\Hc)$, which is induced by the left multiplication in $A$, such that
$$
\Psi(a)=V^*\lambda(a)V\qquad (a\in A).
$$
Then the representation $\lambda$ is called a Stinespring dilation or representation associated with $\Psi$. 
Details can be found for instance in \cite{Pa02}.

Assume that there is a conditional expectation $E\colon A\to A$, with $B:=E(A)$, such that $\Psi\circ E=\Psi$. 
In \cite {BG08}, it was noted that if $\sigma_A\colon A\to \Bc(\Hc_A)$ and $\sigma_B\colon B\to \Bc(\Hc_B)$ 
are the minimal Stinespring representations associated with
$\Psi$ and $\Psi|_B$, respectively, as above then $\Hc_B\subseteq\Hc_A$, 
and the orthogonal projection $P\colon\Hc_A\to\Hc_B$ is induced by $E\colon A\to B$ in the Stinespring construction. 
Thus, for the given representation $\rho\colon \G\to\Bc(\Hc_B)$,  
we get the connection on the homogeneous bundle
$\G_A\times_{\G_B}\Hc_B\to \G_A/\G_B$ yielded by $E$ as in Definition~\ref{defconind}.
\end{example}

\begin{example}[universal bundles]\label{ex4}
\normalfont
Let $\Hc$ be a complex Hilbert space. 
The {\it Grassmann manifold} of $\Hc$ is 
$$\Gr(\Hc):=\{\Sc\mid\Sc\text{ closed linear subspace of }\Hc\}.$$ 
It is well known that it has the structure of a complex Banach manifold 
(see also \cite{DG01}). 
The set 
$\Tc(\Hc):=\{(\Sc,x)\in\Gr(\Hc)\times\Hc\mid x\in\Sc\}\subseteq\Gr(\Hc)\times\Hc$ 
is also a complex Banach manifold, and the mapping  
$\Pi_{\Hc}\colon\,(\Sc,x)\mapsto\Sc$, $\Tc(\Hc)\to\Gr(\Hc)$ is 
a holomorphic Hermitian vector bundle on which $U(\Hc)$ acts  
holomorphically (non-transitively on the base $\Gr(\Hc)$ if $\dim\Hc\ge2$);  
see \cite[Ex. 3.11 and 6.20]{Up85}. 
We call $\Pi_{\Hc}$ the {\it universal (tautological) vector bundle} associated with  the Hilbert space~$\Hc$. 
A canonical connection can be defined on that bundle, which relies on the preceding examples. 
To see this, let us consider the connected components of $\Gr(\Hc)$.  

For every $\Sc\in\Gr(\Hc)$ denote by $p_{\Sc}\colon\Hc\to\Sc$ the corresponding orthogonal projection. 
Take $\Sc_0\in\Gr(\Hc)$ and put $p:=p_{\Sc_0}$.  
The connected component of $\Sc_0\in\Gr(\Hc)$ is given by
$$
\begin{aligned}
\Gr_{\Sc_0}(\Hc)
&=
\{u\Sc_0\mid u\in U(\Hc)\} 
=\{\Sc\in\Gr(\Hc)\mid\dim\Sc=\dim\Sc_0,\, 
\dim\Sc^\perp=\dim\Sc_0^\perp\} \\
&\simeq U(\Hc)/\Uc(p)\simeq U(\Hc)/(U(\Sc_0)\times U(\Sc_0^\perp))
\end{aligned}
$$
where $\Uc(p):=\{u\in U(\Hc)\mid u\Sc_0=\Sc_0\}$.   
(See for instance \cite[Prop. 23.1]{Up85} or \cite[Lemma~4.3]{BG09}.)

By restricting  $\Pi_{\Hc}$ to $\Tc_{\Sc_0}(\Hc):=\{(\Sc,x)\in\Tc(\Hc)\mid\Sc\in\Gr_{\Sc_0}(\Hc)\}$ 
we obtain  
the 
Hermitian bundle $\Pi_{\Hc,\Sc_0}\colon\Tc_{\Sc_0}(\Hc)\to\Gr_{\Sc_0}(\Hc)$.  
The map  
$$
U(\Hc)\times_{\Uc(p)}\Sc_0\ni[(u,x)]\mapsto(u\Sc_0,ux)\in\Tc_{\Sc_0}(\Hc)
$$ 
is a diffeomorphism of vector bundles between 
$U(\Hc)\times_{\Uc(p)}\Sc_0\to\Uc/\Uc(p)$, 
where the representation of $\Uc(p)$ on $\Sc_0$ is just the tautological action,
and   
$\Pi_{\Hc,\Sc_0}\colon\Tc_{\Sc_0}(\Hc)\to\Gr_{\Sc_0}(\Hc)$. 
See \cite[Prop. 4.5]{BG09}. 

On the other hand, we have a conditional expectation 
$$
E_p\colon \Bc(\Hc)\to\{p\}',\quad X\mapsto E_p(X):=pXp+(1-p)X(1-p),
$$
where $\{p\}'=\{X\in\Bc(\Hc)\mid Xp=pX\}$. 
By restricting $E_p$ to the Lie algebra $\ug(\Hc):=\{X\in\Bc(\Hc)\mid X^*=-X\}$ of $U(\Hc)$  
and applying Theorem \ref{conred} to the reductive structure
$(U(\Hc),\Uc(p);E_p\mid_{\ug(\Hc)})$, we obtain
the following canonical connection on the universal bundle 
$\Tc(\Hc)\to\Gr(\Hc)$. 
Recall that $\Gr_{\Sc}(\Hc)$, for $\Sc$ running over $\Gr(\Hc)$, are the connected components of $\Gr(\Hc)$. 
\end{example}

\begin{definition}\label{tautoconn} 
\normalfont
In the framework of Example~\ref{ex4}, the {\it universal (linear) connection} 
on the tautological bundle $\Pi_{\Hc,\Sc_0}\colon\Tc_{\Sc_0}(\Hc)\to\Gr_{\Sc_0}(\Hc)$ 
is the mapping
$$
\Phi_{\Sc_0}\colon T(\Tc_{\Sc_0}(\Hc))\to T(\Tc_{\Sc_0}(\Hc))
$$ 
given by 
$$
[((u,X), (x,y))]\mapsto 
[((u,E_p(X)), (x,y))]
=
[((u,0), (x,E_p(X)x+y))],
$$
for $u\in U(\Hc)$, $X\in\ug(\Hc)$, and $x, y\in\Sc_0$. 
Then the  {\it universal connection} $\Phi_{\Hc}$ 
on the tautological bundle $\Pi_{\Hc}\colon\Tc(\Hc)\to\Gr(\Hc)$ is defined by
$$ 
\Phi_{\Hc}((u,X)):=\Phi_{\Sc_0}((u,X))
$$
for every $(u,X)\in T(\Tc(\Hc))$ with $(u,X)\in T(\Tc_{\Sc_0}(\Hc))$, 
$\Sc_0\in\Gr(\Hc)$.
\end{definition}

\begin{remark}\label{unitaucon} 
\normalfont
The expression of the connection $\Phi_{\Hc}$ on the sub-bundle $\Pi_{\Hc,\Sc_0}$ 
depends obviously on the realization of $\Pi_{\Hc,\Sc_0}$ as a homogeneous vector bundle, 
given by the diffeomorphism 
$\Tc_{\Sc_0}(\Hc)=\Tc_{\Sc}(\Hc)\equiv U(\Hc)\times_{U(p_{\Sc})}\Sc$, 
for $\Sc$ running over the component $\Gr_{\Sc_0}(\Hc)$. 
However, we can say that $\Phi_{\Sc_0}$ is unique in the sense that it is invariant under the action of $U(\Hc)$:

Let $\alpha\in U(\Hc)$ so that $\Sc_1:=\alpha\Sc_0\in\Gr_{\Sc_0}(\Hc)$. 
Then there is the natural diffeomorphism 
$$ 
U(\Hc)\times_{U(p_{\Sc_0})}\Sc_0\rightarrow U(\Hc)\times_{U(p_{\Sc_1})}\Sc_1,\quad 
[(u,x_0)]\mapsto[(\alpha u\alpha^{-1},\alpha x_0)],
$$
which induces the diffeomorphism 
$T_\alpha\colon T(\Tc_{\Sc_0}(\Hc))\equiv T(\Tc_{\Sc_1}(\Hc))$, 
between the corresponding homogeneous tangent bundles, given by 
$$
T_\alpha\colon[((u,X),(x_0,y_0))]
\mapsto[((u\alpha^{-1},\alpha X\alpha^{-1}),(\alpha x_0,\alpha y_0))] 
$$
Then it is readily seen that, on the level of connections, 
$\Phi_{\Sc_1}=T_\alpha\circ\Phi_{\Sc_0}\circ (T_\alpha)^{-1}$, which is to say,  
$\Phi_{\Hc}$ is $U(\Hc)$-equivariant.

In the sequel, and particularly in what concerns Theorem \ref{canonical}, 
whenever we deal with connections on $T(\Tc(\Hc))$ 
we will be assuming that an element $\Sc_0$ has been fixed in every connected component 
$\Gr_{\Sc_0}(\Hc)$ of $\Gr(\Hc)$, 
and that the connection $\Phi_{\Sc_0}$ is referred to that element as indicated above. 
\end{remark}

Universal connections on finite-dimensional bundles were studied in several papers including 
for instance \cite{NR61}, \cite{NR63}, \cite{Sch80}, and \cite{PR86}. 

It turns out that the Grassmannian objects that we are considering here fit also well 
in the setting given in the above Example~\ref{ex3}. 
In fact, the compression mapping
$$  
\Psi_{\Sc_0}\colon X\mapsto p\circ X\circ\iota_{\Sc_0},\ \Bc(\Hc)\to\Bc(\Sc_0),
$$
where $\iota_{\Sc_0}$ is the inclusion $\Sc_0\hookrightarrow\Bc(\Hc)$, 
is a unital completely positive mapping satisfying 
$\Psi_{\Sc_0}\circ E_p=\Psi_{\Sc_0}$ and the vector bundle defined by
$(\Bc(\Hc), \{p\}',E_p;\Psi_{\Sc_0})$ as in Example~\ref{ex3} coincides with $\Pi_{\Hc,\Sc_0}$.

\smallbreak
We now provide a formula for the covariant derivative 
corresponding to the universal connection on a tautological bundle, 
which will be needed in the proof of a more general result of this type, 
given in Theorem~\ref{deriv_prop3} below. 
Related finite-dimensional formulas are implicit in \cite[Ch. III]{Le68} and \cite{PR86}, 
but we should mention that the approach to the following result is quite different in a couple of respects, 
beyond the obvious fact that we are working here in infinite dimensions. 
More specifically, our starting point is a connection defined as a splitting 
of the tangent space of the tautological vector bundle, rather than the corresponding covariant derivative 
as in the aforementioned references.  
Secondly, the following statement and proof emphasize the role of the orthogonal projections on closed subspaces 
in order to compute the covariant derivative. 
As the orthogonal projections are just the basic pieces of the  universal reproducing kernels 
(see Example~\ref{univker} below), we thus have an illustration of the main theme of the present paper, 
namely that the reproducing kernels give rise to linear connections of the bundles where these kernels live. 
That is nontrivial even in the case of the tautological bundles associated with finite-dimensional Hilbert spaces, 
and yet we were unable to find any reference for that relationship in the earlier literature. 

\begin{proposition}\label{univ_deriv}
Let $\Sc_0\in\Gr(\Hc)$. 
If $\sigma\in\Omega^0(\Gr_{\Sc_0}(\Hc),\Tc_{\Sc_0}(\Hc))$ is a smooth section, 
then there exists a unique smooth function $F_{\sigma}\in\Ci(\Gr_{\Sc_0}(\Hc),\Hc)$ 
such that $\sigma(\cdot)=(\,\cdot\,,F_{\sigma}(\cdot))$ and we have 
$$
\nabla\sigma(X)=(\Sc,p_{\Sc}(\de F_{\sigma}(X))), \quad \Sc\in\Gr_{\Sc_0}(\Hc), X\in T_{\Sc}(\Gr_{\Sc_0}(\Hc)),
$$ 
where $p_{\Sc}$ is the orthogonal projection from $\Hc$ onto~$\Sc$. 
\end{proposition}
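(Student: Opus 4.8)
The plan is to compute the covariant derivative by passing through the homogeneous-bundle picture of $\Tc_{\Sc_0}(\Hc)$ and applying the Maurer--Cartan formula of Remark~\ref{MC_rem}(3). First I would fix the identification $\Tc_{\Sc_0}(\Hc)\simeq U(\Hc)\times_{\Uc(p)}\Sc_0$ from Example~\ref{ex4}, under which the universal connection $\Phi_{\Sc_0}$ is precisely the connection $\Phi_{E_p}$ induced by the reductive structure $(U(\Hc),\Uc(p);E_p|_{\ug(\Hc)})$. The existence and uniqueness of $F_{\sigma}$ is immediate: since $\Tc_{\Sc_0}(\Hc)\subseteq\Gr_{\Sc_0}(\Hc)\times\Hc$ is the subbundle of pairs $(\Sc,x)$ with $x\in\Sc$, any section has the form $\sigma(\Sc)=(\Sc,F_{\sigma}(\Sc))$ with $F_{\sigma}(\Sc)\in\Sc$, and $F_{\sigma}=\pr_{\Hc}\circ\sigma$ is the unique smooth $\Hc$-valued function recovering it.

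The key step is to realize the tautological representation of $\Uc(p)$ on $\Sc_0$ as the restriction of the ambient representation $\rho=\id\colon U(\Hc)\to\Bc(\Hc)$, so that the covariant derivative falls exactly into the framework of Remark~\ref{MC_rem}(2)--(3), where the fibre $\textbf{E}$ is replaced by $\Hc$ (one restricts afterward to the invariant subbundle with fibre $\Sc_0$). The formula there reads $\nabla F=\de F-(\de\rho\circ\beta).F$, and with $\rho$ the identity representation we have $\de\rho(X)=X\in\ug(\Hc)\subseteq\Bc(\Hc)$. Writing a tangent vector as $X\in T_{\Sc}(\Gr_{\Sc_0}(\Hc))$ and using the isomorphism $T(\Gr_{\Sc_0}(\Hc))\simeq U(\Hc)\times_{\Uc(p)}\mg$ with $\mg=\Ker E_p$, the Maurer--Cartan term $\beta_{\Sc}(X)$ is an element of $\mg$, i.e.\ an operator exchanging $\Sc$ and $\Sc^\perp$. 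Substituting into the formula gives $\nabla\sigma(X)=(\Sc,\de F_{\sigma}(X)-\beta_{\Sc}(X)F_{\sigma}(\Sc))$.

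The final step is to recognize the right-hand side as the orthogonal projection of $\de F_{\sigma}(X)$ onto $\Sc$. Since $F_{\sigma}(\Sc)\in\Sc$ and $\beta_{\Sc}(X)\in\mg$ maps $\Sc$ into $\Sc^\perp$, the vector $\beta_{\Sc}(X)F_{\sigma}(\Sc)$ lies in $\Sc^\perp$. On the other hand, differentiating the relation $p_{\Sc}F_{\sigma}(\Sc)=F_{\sigma}(\Sc)$ (valid because $F_{\sigma}(\Sc)\in\Sc$) along $X$ shows that the $\Sc^\perp$-component of $\de F_{\sigma}(X)$ is exactly $(\de p_{\Sc}(X))F_{\sigma}(\Sc)$, and one checks this coincides with $\beta_{\Sc}(X)F_{\sigma}(\Sc)$. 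Consequently $\de F_{\sigma}(X)-\beta_{\Sc}(X)F_{\sigma}(\Sc)=p_{\Sc}(\de F_{\sigma}(X))$, which is the claimed formula.

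I expect the main obstacle to be the bookkeeping in the second step: correctly transporting the trivialization-dependent formula of Remark~\ref{MC_rem}(3) to the present section $\sigma$ rather than a global $\textbf{E}$-valued function, and identifying $\beta_{\Sc}(X)$ with the derivative $\de p_{\Sc}(X)$ of the projection-valued map. The cleanest way to settle the latter is to differentiate the idempotent relation $p_{\Sc}^2=p_{\Sc}$ together with $p_{\Sc}^*=p_{\Sc}$, which pins down $\de p_{\Sc}(X)$ as an off-diagonal operator (in the $\Sc\oplus\Sc^\perp$ decomposition) and matches it against the explicit action of $E_p$ and $\mg$; once this matching is in place, the projection identity in the third step is routine.
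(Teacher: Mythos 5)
Your proposal follows essentially the same route as the paper: embed the tautological bundle into the ambient homogeneous bundle $U(\Hc)\times_{\Uc(p)}\Hc$ coming from the identity representation, trivialize it as in Remark~\ref{MC_rem}(2), apply the Maurer--Cartan formula $\nabla F=\de F-(\de\rho\circ\beta).F$ of Remark~\ref{MC_rem}(3), and use the off-diagonal form of $\mg=\Ker E_p$ to recognize the result as $p_{\Sc}(\de F_{\sigma}(X))$. The only cosmetic difference is in the last step: the paper observes that the two terms of the Maurer--Cartan decomposition lie in $\Sc_0$ and $\Sc_0^\perp$ respectively (having first checked via Proposition~\ref{deriv_prop1} that the ambient covariant derivative restricts to the subbundle one), whereas you verify the same orthogonality directly by differentiating $p_{\Sc}F_{\sigma}=F_{\sigma}$ and matching $\de p_{\Sc}(X)$ against $\beta_{\Sc}(X)$ on $\Sc$ --- both computations are correct and equivalent.
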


\begin{proof}
We use the tautological representations 
$$
\U(\Sc_0)\times\U(\Sc_0^{\perp})
\mathop{\hookrightarrow}^{\rho_0}\Bc(\Sc_0)
\text{ and }
\U(\Sc_0)\times\U(\Sc_0^{\perp})\hookrightarrow\U(\Hc)
\mathop{\hookrightarrow}^{\rho_1}\Bc(\Hc)
$$
for constructing the homogeneous vector bundles 
$$
\begin{aligned}
\Pi_0 & \colon D_0:=\U(\Hc)\times_{\U(\Sc_0)\times\U(\Sc_0^{\perp})}\Sc_0\to\U(\Hc)/(\U(\Sc_0)\times\U(\Sc_0^{\perp})) 
\text{ and }\\
\Pi_1 & \colon D_1:=\U(\Hc)\times_{\U(\Sc_0)\times\U(\Sc_0^{\perp})}\Hc\to\U(\Hc)/(\U(\Sc_0)\times\U(\Sc_0^{\perp})).
\end{aligned}
$$ 
Then Remark~\ref{MC_rem}(2) provides a $\U(\Hc)$-equivariant diffeomorphism 
$$\delta_{\Sc_0}\colon\U(\Hc)\times_{\U(\Sc_0)\times\U(\Sc_0^{\perp})}\Hc\to 
(\U(\Hc)/(\U(\Sc_0)\times\U(\Sc_0^{\perp})))\times\Hc$$
which together with the natural diffeomorphism 
$$\zeta_{\Sc_0}\colon \U(\Hc)/({\U(\Sc_0)\times\U(\Sc_0^{\perp}))}\to\Gr_{\Sc_0}(\Hc)$$
provide an isomorphism between $\Pi_1$ and the trivial bundle $\Gr_{\Sc_0}(\Hc)\times\Hc\to\Gr_{\Sc_0}(\Hc)$. 
Also, $\Pi_0$ is a $\U(\Hc)$-homogeneous subbundle of $\Pi_1$ and 
the pair $(\delta_{\Sc_0},\zeta_{\Sc_0})$ restricts to an isomorphism 
from $\Pi_0$ onto the tautological bundle $\Pi_{\Hc,\Sc_0}\colon\Tc_{\Sc_0}(\Hc)\to\Gr_{\Sc_0}(\Hc)$. 

Now for $j=0,1$ let $\Phi_j\colon T(D_j)\to T(D_j)$ denote the linear connection 
induced by the reductive structure $(U(\Hc),\U(\Sc_0)\times\U(\Sc_0^{\perp});E_{p_{\Sc_0}})$ 
and $\nabla_j$ 
be the corresponding covariant derivative. 
 It easily follows by Theorem~\ref{conred} that $\Phi_1\vert_{T(D_0)}=\Phi_0$, 
 and then Proposition~\ref{deriv_prop1} shows that 
 $\nabla_1$ agrees with $\nabla_0$. 
 By taking into account the aforementioned isomorphisms of homogeneous vector bundles, 
 it then follows that the covariant derivative $\nabla$ in the tautological vector bundle $\Pi_{\Sc_0,\Hc}$ 
 agrees with the covariant derivative $\widetilde{\nabla}$ in the larger trivial bundle $\Gr_{\Sc_0}(\Hc)\times\Hc\to\Gr_{\Sc_0}(\Hc)$, 
 both these covariant derivatives being the ones induced by the reductive structure $(U(\Hc),\U(\Sc_0)\times\U(\Sc_0^{\perp});E_{p_{\Sc_0}})$. 
Consequently it suffices to compute the action of $\widetilde{\nabla}$ on the sections 
of the subbundle $\Pi_{\Sc_0,\Hc}$, and to this end we use Remark~\ref{MC_rem}(3).

If we write the operators on $\Hc$ as $2\times 2$ block matrices corresponding to the orthogonal decomposition $\Hc=\Sc_0\oplus\Sc_0^\perp$, then 
$$
\mg=\Ug(\Hc)\cap\Ker E_{p_{\Sc_0}}
=\Bigl\{\begin{pmatrix} \hfil 0 & \hfil R \\ \hfil -R^* & \hfil 0\end{pmatrix}\mid 
R\in\Bc(\Sc_0,\Sc_0^\perp)\Bigr\}
$$
hence for every $V\in\mg$ we have $V\Sc_0\subseteq\Sc_0^\perp$. 
Therefore, if we denote by 
$$\beta\colon T(\Gr_{\Sc_0}(\Hc))\simeq T(\U(\Hc)/(\U(\Sc_0)\times\U(\Sc_0^{\perp})))\to\Ug(\Hc)$$
the Maurer-Cartan form for the reductive structure 
$(U(\Hc),\U(\Sc_0)\times\U(\Sc_0^{\perp});E_{p_{\Sc_0}})$, as in Remark~\ref{MC_rem}(3), 
then for every $X\in T_{\Sc_0}(\Gr_{\Sc_0}(\Hc))$ we have 
$(\de\rho_1\circ\beta)(X)\Sc_0\subseteq\Sc_0^\perp$. 
On the other hand for 
$\sigma(\cdot)=(\cdot,F_{\sigma}(\cdot))
\in\Omega^0(\Gr_{\Sc_0}(\Hc),\Tc_{\Sc_0}(\Hc))$ 
as in the statement we have 
$(\widetilde{\nabla}\sigma)(X)
=(\Sc_0,(\widetilde{\nabla}F_{\sigma})(X))\in\{\Sc_0\}\times\Sc_0$, 
hence the equality provided by Remark~\ref{MC_rem}(3)
$$
\de F_{\sigma}(X)
=(\widetilde{\nabla}F_{\sigma})(X)+(\de\rho_1\circ\beta)(X)F_{\sigma}(\Sc_0)
$$
actually gives the decomposition corresponding to the orthogonal direct sum $\Hc=\Sc_0\oplus\Sc_0^\perp$. 
Therefore $(\widetilde{\nabla}F_{\sigma})(X)=p_{\Sc_0}(\de F_{\sigma}(X))$, 
and this proves the assertion for $\Sc=\Sc_0$ since we have seen above that 
$\nabla$ agrees with $\widetilde{\nabla}$. 

The formula for the covariant derivative $\nabla$ at another point $\Sc\in\Gr_{\Sc_0}(\Hc)$ then follows 
by using the transitive action of $\U(\Hc)$ on $\Gr_{\Sc_0}(\Hc)$ 
and the $\U(\Hc)$-equivariance property of the Maurer-Cartan form~$\beta$, 
and this completes the proof. 
\end{proof}

The above Examples \ref{ex3}--\ref{ex4} will be revisited in Section \ref{examples}.

%%%%%%%%%%%%%%%%%%%%
\section{Reproducing kernels and their classifying morphisms}\label{Sect4}
%%%%%%%%%%%%%%%%%%%%%
%%%%%%%%%%%%%%%%

In this section we begin the developments that will lead up in the next section 
to the canonical correspondence between the admissible reproducing kernels and the linear connections. 
More specifically, we will establish the basic properties of the classifying morphisms, 
which are bundle morphisms into the universal bundles over 
the Grassmann manifolds of the reproducing kernel Hibert spaces. 

\subsection{Reproducing kernels on Hermitian bundles}

Geometric models for representations of unitary groups of $C^*$-algebras were obtained in \cite{BR07} 
by using reproducing kernels associated with suitable homogeneous vector bundles.  
An approach to these topics in the framework of category theory was carried out in \cite{BG11}, 
which enables us to recover reproducing kernels on Hermitian vector bundles 
from the universal reproducing kernels on the tautological vector bundles; 
see Example~\ref{univker} and Theorem~\ref{univinvol} below.

\begin{definition}\label{like}
\normalfont
Let $Z$ be a Banach manifold.
A {\it Hermitian structure} on a smooth Banach vector bundle $\Pi\colon D\to Z$ is a family
$\{(\cdot\mid\cdot)_z\}_{z\in Z}$
with the following properties:
\begin{itemize}
\item[{\rm(a)}]
For every $z\in Z$,
$(\cdot\mid\cdot)_z\colon D_z\times D_z\to{\mathbb C}$
is a scalar product (${\mathbb C}$-linear in the first variable) 
that turns the fiber $D_z$ into a complex Hilbert space. 
\item[{\rm(b)}]
%For all $z\in Z$, $\xi\in D_z$, and $\eta\in D_{z^{-*}}$ we have 
%$$\overline{(\xi\mid\eta)}_{z,z^{-*}}=(\eta\mid\xi)_{z^{-*},z}, \hbox { if the pairing is sesquilinear, }$$
%or 
%$$(\xi\mid\eta)_{z,z^{-*}}=(\eta\mid\xi)_{z^{-*},z}, \hbox { if the pairing is bilinear. }$$
%\item[{\rm(c)}]
If $V$ is any open subset of $Z$,
and 
$\Psi_V\colon V\times\Ec\to\Pi^{-1}(V)$
%\text{ and }\Psi_{V^{-*}}\colon V^{-*}\times\Ec\to\Pi^{-1}(V^{-*})$$
is a trivializations (whose typical fiber is the complex Hilbert space $\Ec$) 
of the vector bundle $\Pi$
over $V$,  
%and $V^{-*}$ ($:=\{z^{-*}\mid z\in V\}$), respectively,
then the function
$(z,x,y)\mapsto(\Psi_V(z,x)\mid\Psi_V(z,y))_z$,
$V\times \Ec\times\Ec\to{\mathbb C}$
is smooth.
\end{itemize}
A {\it Hermitian bundle} is a bundle endowed with a Hermitian structure as above.
\end{definition}

\begin{definition}\label{reprokernel}
\normalfont
Let $\Pi\colon D\to Z$ be a Hermitian bundle. 
A {\it reproducing kernel} on~$\Pi$ is a continuous section of the bundle 
$\rm{Hom}(p_2^*\Pi,p_1^*\Pi)\to Z\times Z$ such that the
mappings 
$K(s,t)\colon D_t\to D_s$ ($s,t\in Z$)
are bounded linear operators 
%if the Hermitian-like structure is sesquilinear 
%(respectively, they are conjugate-linear if that structure is bilinear), 
and such that $K$ is positive definite in the following sense: 
For every $n\ge1$ and $t_j\in Z$,
$\eta_j\in D_{t_j}$ ($j= 1,\dots, n$),
\begin{equation}\label{reprokernel_eq1}
\sum_{j,l=1}^n
\bigl(K(t_l,t_j)\eta_j\mid \eta_l\bigr)_{t_l}\ge0.
\end{equation}
Here $p_1,p_2\colon Z\times Z\to Z$ are the natural projection mappings. 

For every $\xi\in D$ we set $K_\xi:=K(\cdot,\Pi(\xi))\xi\colon Z\to D$, 
which is a section of the bundle $\Pi$.  
For $\xi,\eta\in D$, 
%$\xi\in D_s, \eta\in D_t$ with $s,t\in Z$, 
the prescriptions 
\begin{equation}\label{reprokernel_eq2}
(K_\xi\mid K_\eta)_{\Hc^K}:=(K(\Pi(\eta),\Pi(\xi))\xi\mid\eta)_{\Pi(\eta)},
\end{equation}
define an inner product 
$(\cdot\mid \cdot)_{\Hc^K}$ on $\hbox{span}\{K_\xi:\xi\in D\}$ 
whose completion gives rise to a Hilbert space denoted by $\Hc^K$, 
which consists of sections of the bundle $\Pi$ 
(see \cite{Ne00} or \cite[Th. 4.2]{BR07}). 
We also define the mappings 
$$\begin{aligned}
\widehat{K}& \colon D\to\Hc^K,\quad \widehat{K}(\xi)=K_\xi,\\
\zeta_K & \colon Z\to\Gr(\Hc^K),\quad \zeta_K(s)=\overline{\widehat{K}(D_s)},
\end{aligned}
$$
where the bar over $\widehat{K}(D_s)$ indicates the topological closure.
\end{definition}

In the following two lemmas we establish some basic properties of the above mappings. 

\begin{lemma}\label{smoothhat}
In the setting of Definition~\ref{reprokernel},  
if $K$ is a smooth section of the bundle $\rm{Hom}(p_2^*\Pi,p_1^*\Pi)$, then 
the mapping $\widehat{K}\colon D\to\Hc^K$ is smooth. 
\end{lemma}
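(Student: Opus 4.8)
The plan is to reduce the smoothness of the Hilbert-space-valued map $\widehat K$ to the smoothness of a single scalar Gram function, and then to exploit the fact that, for a map into a Hilbert space, strong (norm) convergence of difference quotients is entirely controlled by such scalar data.

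First I would record that the \emph{Gram function}
\[
\Phi\colon D\times D\to\C,\qquad \Phi(\xi,\eta)=(K_\xi\mid K_\eta)_{\Hc^K}=(K(\Pi(\eta),\Pi(\xi))\xi\mid\eta)_{\Pi(\eta)},
\]
is smooth. Since this is a local assertion, I would fix trivializations of $\Pi$ near $\Pi(\xi_0)$ and near $\Pi(\eta_0)$; in these charts the hypothesis that $K$ is a smooth section of $\Hom(p_2^*\Pi,p_1^*\Pi)$ means exactly that $K$ becomes a smooth map with values in the bounded operators between the typical fibers, and then $\Phi$ is the composition of this map with the fiberwise inner product, which is smooth by Definition~\ref{like}(b). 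Hence $\Phi\in\Ci(D\times D,\C)$.

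Because the paper works in the convenient calculus of \cite{KM97a}, to prove that $\widehat K$ is smooth it suffices to show that $\widehat K\circ c\colon\R\to\Hc^K$ is a smooth curve for every smooth curve $c\colon\R\to D$. Fixing such a $c$, I would set $\gamma=\widehat K\circ c$ and $\psi(t,s)=\Re\,\Phi(c(t),c(s))=\langle\gamma(t),\gamma(s)\rangle_{\R}$, where $\langle\cdot,\cdot\rangle_{\R}=\Re(\cdot\mid\cdot)_{\Hc^K}$ is the underlying real inner product; then $\psi\in\Ci(\R^2,\R)$. The decisive point is that every norm one needs to control is a second difference of $\psi$: for the difference quotients $q_h=h^{-1}(\gamma(t+h)-\gamma(t))$ one has
\[
\|q_h-q_{h'}\|^2=\langle q_h,q_h\rangle_\R-2\langle q_h,q_{h'}\rangle_\R+\langle q_{h'},q_{h'}\rangle_\R,
\]
and each term is an explicit divided difference of $\psi$ tending to $\partial_t\partial_s\psi(t,t)$ as $h,h'\to0$. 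Hence $\|q_h-q_{h'}\|\to0$, the net $(q_h)$ is Cauchy, and $\gamma'(t):=\lim_{h\to0}q_h$ exists in the norm of $\Hc^K$. Passing to the limit in the scalar identities yields $\langle\gamma'(t),\gamma(s)\rangle_\R=\partial_t\psi(t,s)$ and $\langle\gamma'(t),\gamma'(s)\rangle_\R=\partial_t\partial_s\psi(t,s)$, whence continuity of $\gamma'$ follows from $\|\gamma'(t)-\gamma'(t')\|^2=\partial_t\partial_s\psi(t,t)-2\partial_t\partial_s\psi(t,t')+\partial_t\partial_s\psi(t',t')\to0$. I would then close by induction: assuming $\gamma^{(k)}$ exists as a norm-limit with $\langle\gamma^{(i)}(t),\gamma^{(j)}(s)\rangle_\R=\partial_t^i\partial_s^j\psi(t,s)$ for $i,j\le k$, the same second-difference estimate applied to the smooth function $\partial_t^k\partial_s^k\psi$ shows that the difference quotients of $\gamma^{(k)}$ converge in norm to $\gamma^{(k+1)}(t)$ and the inner-product identities extend to level $k+1$. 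Thus $\gamma$ is $C^\infty$, and $c$ being arbitrary, $\widehat K$ is smooth.

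The step I expect to be the main obstacle is precisely the passage from scalar (weak) information to norm convergence: a priori $\widehat K$ is only weakly smooth when paired against the dense family $\{K_\eta\}$, and weak smoothness does not by itself imply smoothness. The device that overcomes this is the identity expressing the squared norm of a difference of difference quotients as a second difference of the smooth scalar function $\psi$, which upgrades the numerical convergence coming from Taylor's theorem to genuine convergence in $\Hc^K$. I would also flag two minor but necessary points: one must pass to the real inner product so as to treat $\Hc^K$ as a real Hilbert space and keep all derivatives real-smooth, and one uses that the limits $\gamma^{(k)}(t)$ automatically lie in the closed span of the vectors $K_\xi$ themselves, so that linearity and the identification of derivatives require no density statement beyond the construction of $\Hc^K$.
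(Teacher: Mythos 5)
Your argument is correct, and its first half coincides with the paper's proof: both reduce the problem to the smoothness of the Gram function $(\xi,\eta)\mapsto(\widehat K(\xi)\mid\widehat K(\eta))_{\Hc^K}$, which follows from \eqref{reprokernel_eq2}, the smoothness of $K$ and $\Pi$, and Definition~\ref{like}(b). The difference lies in the second half: the paper simply invokes \cite[Th.~7.1]{Ne10}, which states precisely that a Hilbert-space-valued map is smooth whenever its associated positive definite kernel is smooth on the product, whereas you prove this fact from scratch by the second-difference argument along smooth curves. Your inline proof is essentially the standard proof of that cited theorem and is sound: the identity $\|q_h-q_{h'}\|^2=\langle q_h,q_h\rangle_\R-2\langle q_h,q_{h'}\rangle_\R+\langle q_{h'},q_{h'}\rangle_\R$, together with the convergence of the divided differences of $\psi$ to $\partial_t\partial_s\psi(t,t)$, does upgrade the a priori only weak differentiability to norm convergence of the difference quotients, and the induction closes correctly. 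What the paper's route buys is brevity; what yours buys is self-containedness, and it correctly isolates the genuine content of the citation, namely that the scalar smoothness of the kernel controls norm Cauchyness. One point worth making explicit if you write this up: the reduction to smooth curves is legitimate here because for maps between Banach manifolds (and into the Banach space $\Hc^K$) smoothness in the convenient sense of \cite{KM97a} coincides with ordinary $C^\infty$ smoothness, which is the notion the paper uses elsewhere.
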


\begin{proof}
Since both $K\colon Z\times Z\to\rm{Hom}(p_2^*\Pi,p_1^*\Pi)$ and $\Pi\colon D\to Z$ 
are smooth mappings, it follows by \eqref{reprokernel_eq2} that the function 
$$D\times D\to\C,\quad (\xi,\eta)\mapsto (\widehat{K}(\xi)\mid \widehat{K}(\eta))_{\Hc^K}$$
is smooth. 
Then the assertion follows by \cite[Th. 7.1]{Ne10}. 
\end{proof}

\begin{lemma}\label{equivKinverse}
In the setting of Definition~\ref{reprokernel}, the following assertions are equivalent at each $s\in Z$:
\begin{itemize}
\item[{\rm(i)}]
The operator $\widehat{K}\vert_{D_s}\colon D_s\to\Hc^K$ is injective and has closed range.
\item[{\rm(ii)}] The operator $K(s,s)\in\Bc(D_s)$ is invertible.
\end{itemize}
\end{lemma}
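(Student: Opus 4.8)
The plan is to reduce the whole equivalence to a single identity. For $\xi,\eta\in D_s$ we have $\Pi(\xi)=\Pi(\eta)=s$, so \eqref{reprokernel_eq2} gives immediately
$$
(\widehat{K}(\xi)\mid\widehat{K}(\eta))_{\Hc^K}=(K(s,s)\xi\mid\eta)_s .
$$
First I would record that $\widehat{K}\vert_{D_s}$ is linear (because each $K(s,s)\colon D_s\to D_s$ is linear) and bounded, since $\|\widehat{K}(\xi)\|_{\Hc^K}^2=(K(s,s)\xi\mid\xi)_s\le\|K(s,s)\|\,\|\xi\|^2$. Writing $T:=\widehat{K}\vert_{D_s}\colon D_s\to\Hc^K$, the displayed identity reads $(T\xi\mid T\eta)_{\Hc^K}=(K(s,s)\xi\mid\eta)_s$, i.e.
$$
T^*T=K(s,s)
$$
as operators on the Hilbert space $D_s$. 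Moreover $K(s,s)$ is a positive operator, by the positive-definiteness condition \eqref{reprokernel_eq1} taken with $n=1$.

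With this in hand, the lemma becomes the classical Hilbert-space fact that a bounded operator $T$ is injective with closed range if and only if the positive operator $T^*T$ is invertible, and I would spell out both implications. For (ii)$\Rightarrow$(i): if $K(s,s)$ is positive and invertible, then $K(s,s)\ge c\,\id$ for some $c>0$, so $\|T\xi\|_{\Hc^K}^2=(K(s,s)\xi\mid\xi)_s\ge c\|\xi\|^2$; thus $T$ is bounded below, which forces injectivity and closed range. For (i)$\Rightarrow$(ii): if $T$ is injective with closed range, then $T$ is a bounded bijection of $D_s$ onto the closed (hence complete) subspace $\Ran T\subseteq\Hc^K$, and the bounded inverse theorem yields a $c>0$ with $\|T\xi\|_{\Hc^K}\ge c\|\xi\|$; consequently $(K(s,s)\xi\mid\xi)_s\ge c^2\|\xi\|^2$, and since $K(s,s)$ is positive this gives $K(s,s)\ge c^2\,\id$, so $0$ lies outside the spectrum and $K(s,s)$ is invertible.

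The point that deserves genuine care — and which I regard as the crux of the argument — is precisely the first step: checking that $\widehat{K}\vert_{D_s}$ is linear and bounded so that $T^*$ exists, and correctly identifying $T^*T$ with $K(s,s)$ itself (rather than with some other compression of the kernel) via \eqref{reprokernel_eq2}. Once the relation $T^*T=K(s,s)$ is in place together with the positivity of $K(s,s)$, I expect no real obstacle: the remainder is the standard interplay between being bounded below, being injective with closed range, and invertibility of a positive operator, and no infinite-dimensional subtleties intervene because $D_s$ and $\Hc^K$ are honest Hilbert spaces.
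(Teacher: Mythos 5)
Your proof is correct and follows essentially the same route as the paper: both arguments reduce (i) to the lower bound $\Vert\widehat{K}(\xi)\Vert_{\Hc^K}\ge c\Vert\xi\Vert_{D_s}$, rewrite its square via \eqref{reprokernel_eq2} as $(K(s,s)\xi\mid\xi)_{D_s}\ge c^2\Vert\xi\Vert_{D_s}^2$, and conclude using the positivity of $K(s,s)$ coming from \eqref{reprokernel_eq1} with $n=1$. The $T^*T=K(s,s)$ packaging is a harmless extra layer that the paper does not bother to introduce.
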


\begin{proof} 
The property that $\widehat{K}\vert_{D_s}\colon D_s\to\Hc^K$ is injective and has closed range 
is equivalent to the fact that  
there exists $c>0$ such that 
for every $\xi\in D_s$ we have $\Vert\widehat{K}(\xi)\Vert_{\Hc^K}\ge c\Vert \xi\Vert_{D_s}$, 
which is further equivalent to 
$(K_\xi\mid K_\xi)_{\Hc^K}\ge c^2 \Vert \xi\Vert_{D_s}^2$, 
that is, $(K(s,s)\xi\mid\xi)_{D_s}\ge c^2 \Vert \xi\Vert_{D_s}^2$. 
The latter condition is equivalent to the fact that $K(s,s)$ is invertible on $D_s$, 
since $K(s,s)$ is always a bounded nonnegative self-adjoint operator on the complex Hilbert space $D_s$, 
as a consequence of \eqref{reprokernel_eq1} in Definition~\ref{reprokernel} for $n=1$. 
\end{proof}

\begin{definition}\label{adm_def}
\normalfont
A reproducing kernel $K$ on the Hermitian bundle $\Pi\colon D\to Z$ 
 is called \emph{admissible} 
if it has the following properties: 
\begin{itemize}
\item[(a)] The kernel $K$ is smooth as a section of the bundle $\rm{Hom}(p_2^*\Pi,p_1^*\Pi)$. 
\item[(b)] For every $s\in Z$ the operator $K(s,s)\in\Bc(D_s)$ is invertible. 
\item[(c)] The mapping $\zeta_K\colon Z\to\Gr(\Hc^K)$ is smooth.%continuous. 
\end{itemize} 
\end{definition}

%\begin{lemma}%\label{adm_lem}
%If $K$ is an admissible reproducing kernel on the Hermitian bundle $\Pi\colon D\to Z$,  
%then the mapping $\zeta_K\colon Z\to\Gr(\Hc^K)$ is smooth. 
%\end{lemma}

\begin{example}
\normalfont
Assume $\Pi\colon D\to Z$ is a Hermitian bundle whose fibers are finite dimensional 
(for instance, $\Pi$ is a line bundle). 
If a reproducing kernel $K$ on $\Pi$ satisfies the conditions (a)--(b) in Definition~\ref{adm_def}, 
then it also satisfies the condition~(c) hence it is an admissible reproducing kernel.  

To prove this, let $s_0\in Z$ arbitrary. 
Since the bundle $\Pi$ is locally trivial and its fibers are finite-dimensional, 
it follows by an application of Lemma~\ref{equivKinverse} 
that there exist a positive integer $n\ge 1$ and an open neighborhood $Z_0$ of $s_0\in Z$  
such that $\dim\zeta_K(s)=\dim D_s=n$ for every $s\in Z_0$. 
Then we can use \cite[Th. 5.5]{BG11} 
to obtain that the mapping $\zeta_K\colon Z\to\Gr(\Hc^K)$ is continuous. 

Next, for arbitrary $s_0\in Z$, $K(s_0,s_0)\in \Bc(D_{s_0})$ is an invertible operator.  
By considering a local trivialization of $\Pi$ near $s_0$ with the typical fiber $D_{s_0}$ 
and using the fact that $K\colon Z\times Z\to\rm{Hom}(p_2^*\Pi,p_1^*\Pi)$ is continuous, 
it follows that there exists an open neighborhood $Z_0$ of $s_0\in Z$ 
such that for arbitrary $s,t\in Z_0$ the operator $K(s,t)\in\Bc(D_t,D_s)$ is invertible. 
Let us define 
$\widetilde{K}_0\colon Z_0\to\Bc(D_{s_0},\Hc^K)$, $\widetilde{K}_0(s)=\widehat{K}\circ K(s,s_0)$. 
Since $K\colon Z\times Z\to\rm{Hom}(p_2^*\Pi,p_1^*\Pi)$ is smooth by hypothesis, 
$\dim D_{s_0}<\infty$, 
and $\widehat{K}\colon D\to\Hc^K$ is smooth by Lemma~\ref{smoothhat}, 
it follows that the mapping $\widetilde{K}_0\colon Z_0\to\Bc(D_{s_0},\Hc^K)$ is smooth. 

Moreover, for arbitrary $s\in Z_0$, the operator $K(s,s_0)\in\Bc(D_{s_0},D_s)$ 
is invertible, hence $\Ran(\widetilde{K}_0(s))=\widehat{K}(D_s)=\zeta_K(s)$. 
Consequently we have a smooth mapping $\widetilde{K}_0\colon Z_0\to\Bc(D_{s_0},\Hc^K)$ 
with the property that $\Ran(\widetilde{K}_0(\cdot))\colon Z_0\to\Gr(\Hc^K)$ is continuous. 
It then follows (see for instance \cite[Subsect. 1.8 and 1.5]{MS97}) that 
the mapping $\Ran(\widetilde{K}_0(\cdot))$ is smooth, 
that is, $\zeta_K\vert_{Z_0}\colon Z_0\to\Gr(\Hc^K)$ is smooth. 
Since $Z_0$ is a suitably neighborhood of the arbitrary point $s_0\in Z$, the proof is complete. 
\end{example}

We refer to Proposition~\ref{smoothomog} for examples of 
admissible reproducing kernels on Hermitian bundles with infinite-dimensional fibers,  
however we will briefly discuss right now the simplest instance of such a kernel, 
namely the universal reproducing kernel (cf. \cite{BG11}).  
It lives on the universal bundle of a complex Hilbert space,  
which is a basic example of Hermitian vector bundle.
% provided with an involutive isometry on $\Hc$:

\begin{example}\label{univker}
 \normalfont
%Let $C\colon\Hc\to\Hc$ be an involutive isometric, linear or conjugate-linear, operator on  $\Hc$. 
%Then the mapping
%$$\Sc\mapsto \Sc^{-*}:=C(\Sc),\quad \Gr(\Hc)\to\Gr(\Hc)$$
%is a smooth involution in the Grassmannian manifold  $\Gr(\Hc)$. 
If $\Hc$ is a complex Hilbert space, then the universal bundle $\Pi_{\Hc}$ has a natural Hermitian structure 
%if we endow it with the Hermitian-like  pairings 
given by 
$(x\mid y)_{\Sc}:=(x\mid y)_{\Hc}$ for all $\Sc\in\Gr(\Hc)$ and $x,y\in \Sc$.
%(Notice that these pairings are bilinear if $C$ is conjugate-linear). 
This Hermitian bundle carries a natural reproducing kernel $Q_{\Hc}$ defined by
$$
Q_{\Hc}(\Sc_1,\Sc_2):=p_{\Sc_1}\vert_{\Sc_2}\colon \Sc_2\to \Sc_1\quad \text{ for }\Sc_1,\Sc_2\in \Gr(\Hc).
$$
%We will use the notation $\Pi_{\Hc,C}$ to refer to this Hermitian-like universal bundle in the sequel.
Fix an element $\Sc_0\in\Gr(\Hc)$. 
%such that $C(\Sc_0)=\Sc_0$. Then the orbit $\Gr_{\Sc_0}(\Hc)$ 
%is invariant under the involution $\Sc\mapsto C(\Sc)$ 
%of $\Gr(\Hc)$, and therefore 
Then by restriction we obtain the Hermitian vector bundle  
$\Pi_{\Sc_0}\colon \Tc_{\Sc_0}(\Hc)\to\Gr_{\Sc_0}(\Hc)$ as a subbundle of $\Pi_{\Hc}$.
Denote by $Q_{\Sc_0}$ the restriction of the kernel $Q_{\Hc}$ to the bundle $\Pi_{\Sc_0}$. 
For every $\Sc\in\Gr_{\Sc_0}(\Hc)$ there exists $u\in\Uc$ (see Example~\ref{ex4}) 
such that 
$u\Sc_0=\Sc$ and $u\Sc_0^\perp=\Sc^\perp$. 
Then $up_{\Sc_0}=p_{\Sc}u$, that is, 
$p_{\Sc}=up_{\Sc_0}u^{-1}$. 
Thus for all $u_1,u_2\in\Uc$ and $x_1,x_2\in\Sc_0$ 
we have 
$$
Q_{\Sc_0}(u_1\Sc_0,u_2\Sc_0)(u_2x_2)
=p_{u_1\Sc_0}(u_2x_2)=u_1p_{\Sc_0}(u_1^{-1}u_2x_2).
$$ 
See \cite[Def. 4.2 and Rem. 4.3]{BG11} for some more details.
%Any of the bundles $\Pi_{\Hc,C}$ or $\Pi_{\Sc_0,C}$ will be called a {\it universal vector bundle}.
\end{example}

Let $\Pi\colon D\to Z$ and $\widetilde{\Pi}\colon \widetilde{D}\to\widetilde{Z}$  
be Hermitian vector bundles. 
%(which we assume not necessarily of the same sesquilinear or bilinear type), 
%and assume that 
%each of the manifolds $Z$ and $\widetilde{Z}$ is endowed with an involutive 
%diffeomorphism denoted by $z\mapsto z^{-*}$ for both manifolds. 
A {\it quasimorphism} 
of $\Pi$ into $\widetilde{\Pi}$ is a pair 
 $\Theta=(\delta,\zeta)$,  
where 
$\delta\colon D\to\widetilde{D}$ and 
$\zeta\colon Z\to\widetilde{Z}$ 
are (not necessarily smooth) mappings such that:
\begin{itemize}
\item[(i)] $\zeta\circ\Pi=\widetilde{\Pi}\circ\delta$; 
\item[(ii)] for every $z\in Z$ the mapping 
$\delta_z:=\delta|_{D_z}\colon D_z\to\wt D_{\zeta(z)}$ 
is a bounded linear operator. 
\end{itemize}

\begin{definition}[\cite{BG11}]\label{involanti} 
\normalfont 
Let $\Pi\colon D\to Z$ and $\widetilde{\Pi}\colon \widetilde{D}\to\widetilde{Z}$ be 
Hermitian vector bundles with a quasimorphism $\Theta=(\delta,\zeta)$ from 
$\Pi$ to $\widetilde{\Pi}$. 
Assume that $\widetilde K$ is a reproducing kernel on $\widetilde{\Pi}$.
The {\it pull-back of the reproducing kernel} $\widetilde K$ through $\Theta$ 
is the reproducing kernel $\Theta^*{\widetilde K}$ on $\Pi$ defined by
\begin{equation}\label{pullbackrep}
(\forall s,t\in Z)\quad \Theta^*{\widetilde K}(s,t)
=\delta_s^*\circ {\widetilde K}(\zeta(s),\zeta(t))\circ\delta_t.
\end{equation}
\end{definition}

For later use, we now recall from \cite{BG11} the universality theorem for reproducing kernels.

\begin{theorem}\label{univinvol}
Let $\Pi\colon D\to Z$ be a Hermitian vector bundle 
endowed with a reproducing kernel $K$. 
If we define $\delta_K:=(\zeta_K\circ\Pi,\widehat{K})\colon D\to\Tc(\Hc^K)$, 
then we have the vector bundle quasimorphism $\Delta_K:=(\delta_K,\zeta_K)$ 
from
$\Pi$ into the universal bundle 
$\Pi_{\Hc^K}$ and moreover 
$K=(\Delta_K)^*Q_{\Hc^K}$. 
\end{theorem}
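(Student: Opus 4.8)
The plan is to verify the two assertions separately: that $\Delta_K=(\delta_K,\zeta_K)$ is a quasimorphism, and that $(\Delta_K)^*Q_{\Hc^K}=K$. For the first, I would begin by observing that $\delta_K$ does land in $\Tc(\Hc^K)$: for $\xi\in D$ we have $\delta_K(\xi)=(\zeta_K(\Pi(\xi)),K_\xi)$ with $K_\xi=\widehat K(\xi)\in\widehat K(D_{\Pi(\xi)})\subseteq\zeta_K(\Pi(\xi))$, so the second coordinate lies in the subspace given by the first, as demanded by the definition of $\Tc(\Hc^K)$. Condition (i), namely $\zeta_K\circ\Pi=\Pi_{\Hc^K}\circ\delta_K$, is then read off directly from the shapes of $\delta_K$ and $\Pi_{\Hc^K}$. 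For condition (ii), the fiber map $(\delta_K)_z=\delta_K|_{D_z}$ is, under the identification $\Tc(\Hc^K)_{\zeta_K(z)}\cong\zeta_K(z)$, nothing but $\widehat K|_{D_z}\colon\xi\mapsto K_\xi=K(\cdot,z)\xi$; this is linear in $\xi$ because each $K(s,z)$ is linear, and bounded because $\|K_\xi\|_{\Hc^K}^2=(K(z,z)\xi\mid\xi)_z\le\|K(z,z)\|\,\|\xi\|_z^2$ by \eqref{reprokernel_eq2}.

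The content of the theorem is the pull-back identity, which I would establish pointwise. Fix $s,t\in Z$. By Definition~\ref{involanti} and the formula $Q_{\Hc^K}(\Sc_1,\Sc_2)=p_{\Sc_1}|_{\Sc_2}$ for the universal kernel, together with $(\delta_K)_t\eta=K_\eta$, for $\eta\in D_t$ we obtain
$$
(\Delta_K)^*Q_{\Hc^K}(s,t)\eta
=(\delta_K)_s^*\bigl(p_{\zeta_K(s)}(K_\eta)\bigr).
$$
Here $(\delta_K)_s^*\colon\zeta_K(s)\to D_s$ is the Hilbert-space adjoint of the fiber map $(\delta_K)_s\colon D_s\to\zeta_K(s)$, $\xi\mapsto K_\xi$; one may note in passing that the reproducing property makes $(\delta_K)_s^*$ the evaluation $f\mapsto f(s)$, but the computation below uses only the adjoint relation.

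To conclude, I would pair the right-hand side with an arbitrary $\xi\in D_s$ and unwind using the adjoint relation, the self-adjointness of the orthogonal projection $p_{\zeta_K(s)}$, and the fact that $K_\xi\in\zeta_K(s)$ since $\xi\in D_s$:
$$
((\Delta_K)^*Q_{\Hc^K}(s,t)\eta\mid\xi)_{D_s}
=(p_{\zeta_K(s)}(K_\eta)\mid K_\xi)_{\Hc^K}
=(K_\eta\mid K_\xi)_{\Hc^K}
=(K(s,t)\eta\mid\xi)_{D_s},
$$
the last step being \eqref{reprokernel_eq2} with $\Pi(\xi)=s$ and $\Pi(\eta)=t$. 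Since $\xi\in D_s$ is arbitrary, this gives $(\Delta_K)^*Q_{\Hc^K}(s,t)=K(s,t)$, as required. I expect the only genuinely delicate point to be the careful tracking of the domains and codomains of the fiberwise operators $(\delta_K)_s$, its adjoint, and the projection $p_{\zeta_K(s)}$ — in particular the observation that $p_{\zeta_K(s)}$ may be dropped precisely because $K_\xi$ already lies in $\zeta_K(s)$; everything else reduces to the defining formula \eqref{reprokernel_eq2} for the inner product of $\Hc^K$.
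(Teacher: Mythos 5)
Your proof is correct. The paper itself gives no argument for this theorem beyond the citation \cite[Ths. 5.1 and 6.2]{BG11}, so your direct verification simply supplies the details: the quasimorphism conditions follow from $K_\xi\in\widehat{K}(D_{\Pi(\xi)})\subseteq\zeta_K(\Pi(\xi))$ together with the bound $\Vert K_\xi\Vert^2_{\Hc^K}=(K(z,z)\xi\mid\xi)_z\le\Vert K(z,z)\Vert\,\Vert\xi\Vert_z^2$, and the pull-back identity reduces, once the projection $p_{\zeta_K(s)}$ is moved onto $K_\xi\in\zeta_K(s)$ and absorbed, to the defining formula \eqref{reprokernel_eq2}. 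The only point worth flagging is cosmetic: the fiber map $(\delta_K)_s$ has range $\widehat{K}(D_s)$, which need not be closed in $\zeta_K(s)$ absent admissibility, but this does not affect the existence of the adjoint $(\delta_K)_s^*$ on all of $\zeta_K(s)$, so your argument stands as written.
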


\begin{proof}
See \cite[Ths. 5.1 and 6.2]{BG11}. 
\end{proof}

We will call the quasimorphism  $\Delta_K$ constructed in Theorem~\ref{univinvol} 
{\it the classifying quasimorphism} associated with the kernel $K$. 
In order to define the notion of linear connection induced by a reproducing kernel, 
we need to elucidate when the first component of a classifying quasimorphism is a fiberwise isomorphism. 
This is done in the next subsection.

%%%%%%%%%%%%%%%%%%%%%%%%%%%%%%%%%%%
%%%%%%%%%%%%%%%%%%%%%%%%%%%%%%%%%%%%
\subsection{Quantization maps and kernels}\label{Sect5}
%%%%%%%%%%%%%%%%%%%%%%%%%
%%%%%%%%%%%%%%%%%%%%%%

Motivated by the significant physical interpretation given in \cite{Od88} and \cite{Od92} 
(see also \cite{MP97} and \cite{BG11}) to maps from manifolds  
into the projective space of a complex Hilbert space, we use the following terminology.

\begin{definition}\label{Quanmap}
\normalfont
Let $Z$ be a Banach manifold and and $\Hc$ be a complex Hilbert space. 
Any smooth mapping $\zeta\colon Z\to\Gr(\Hc)$ 
is termed a {\it quantization map} from $Z$ to $\Hc$.
\end{definition}

In the framework of Definition~\ref{Quanmap}, set
$$
\Dc_\zeta:=\{(s,x)\in Z\times\Hc: x\in\zeta(s)\}.
$$
Then $\Dc_\zeta$ is a Banach manifold and the projection
$$
\Pi^\zeta\colon (s,x)\mapsto s,\quad \Dc_\zeta\to Z
$$
defines a vector bundle, with local trivializations
$$
(\Pi^\zeta)^{-1}(\Omega_s)\simeq \Omega_s\times\zeta(s)$$ 
for suitably small open subsets 
$\Omega_s\subseteq \zeta^{-1}(\Gr_{\zeta(s)}(\Hc))$, 
where $s\in Z$ and the fiber at $s\in Z$ is identified to $\zeta(s)$. 
Put now
$$
\psi_\zeta(s,x):=(\zeta(s),x), \quad s\in Z, x\in\zeta(s)\subseteq\Hc,
$$
so that $(\psi_\zeta,\zeta)$ is a vector bundle morphism from
$\Pi_\zeta\colon\Dc_\zeta\to Z$ to the universal bundle $\Tc(\Hc)\to\Gr(\Hc)$. 
In fact, by identifying $\Dc_\zeta$ with $\{(s,(\zeta(s),x))\mid s\in Z, x\in\zeta(s)\}$ 
one has that $\Pi_\zeta\colon\Dc_\zeta\to Z$ is isomorphic to the pull-back of the universal bundle 
$\Tc(\Hc)\to\Gr(\Hc)$ through the mapping $\zeta$.

We provide the bundle
$\Pi_\zeta\colon \Dc_\zeta\to Z$ with the Hermitian structure induced from $\Hc$ 
and with the reproducing kernel given by
$$
K_{\zeta}(s,t):=p_{\zeta(s)}|_{\zeta(t)}\colon\zeta(t)\to\zeta(s)\quad (s,t\in Z).
$$
Clearly, $K_{\zeta}(s,s)=\id_{\zeta(s)}$ for every $s\in Z$. 
In fact, $K_{\zeta}$ is admissible. 
Firstly, $\zeta$ is smooth by assumption. 
Moreover, for every $\Sc_0\in\Gr(\Hc)$ the restriction $K_{\zeta}$ on $\zeta^{-1}(\Gr_{\Sc_0}(\Hc))$ 
can be seen as $Q_{\Hc,\Sc_0}\circ(\zeta,\zeta)$ and then we can apply to $Q_{\Hc,\Sc_0}$ 
the same argument of part (a) in the proof of Proposition \ref{smoothomog} below 
to deduce that $K_{\zeta}$ is smooth. 

Thus to every quantization map $\zeta$ there corresponds an admissible reproducing kernel $K_{\zeta}$, 
and it is natural to investigate the correspondence in the opposite direction.

In the following result the bundle 
$\Pi_{\zeta_K}\colon\Dc_{\zeta_{K}}\to Z$ is endowed with the Hermitian structure and the reproducing kernel 
$K_{\zeta_K}$ introduced above.

\begin{theorem}\label{isoquantum}
Let $\Pi\colon D\to Z$ be a Hermitian vector bundle with an admissible reproducing kernel $K$.
Then the following assertions hold. 
\begin{itemize}
\item[{\rm(i)}] The mapping $\check K:=(\Pi,\widehat K)\colon D\to\Dc_{\zeta_{K}}$ is a diffeomorphism 
and we have the commutative diagram   
$$
\begin{CD}
D @>{\check K}>> \Dc_{\zeta_K} \\
@V{\Pi}VV @VV{\Pi^{\zeta_K}}V \\
Z @>{\id_Z}>> Z 
\end{CD}
$$
which gives an isomorphism $\Delta_{\zeta_K}:=(\check K,\id_Z)$ of smooth vector bundles from  
$\Pi$ onto the pull-back of the tautological bundle $\Pi_{\Hc^K}$ through $\zeta_K$. 
\item[{\rm(ii)}] The quasimorphism $\Delta_K=(\delta_K,\zeta_K)$ of Theorem \ref{univinvol} 
is smooth and factorizes according to the commutative diagram 
$$
\begin{CD}
\delta_K\colon D @>{\check{K}}>> \Dc_{\zeta_K} @>{\psi_K}>> \Tc(\Hc^K) \\
\hskip20pt @VV{\Pi}V @VV{\Pi_{\zeta_K}}V @VV{\Pi_{\Hc^K}}V \\
\zeta_K\colon Z @>{\id_Z}>> Z @>{\zeta_K}>> \Gr(\Hc^K),
\end{CD}
$$
where $\psi_K:=\psi_{\zeta_K}$ is as after Definition \ref{Quanmap}. 
\item[{\rm(iii)}]  The pull-back relation 
$K=\Delta_{K}^*Q_{\Hc^K}$ factorizes as
$$
K=\Delta_{\zeta_K}^*K_{\zeta_K}
=\Delta_{\zeta_K}^*(\psi_K,\zeta_K)^*Q_{\Hc^K}
=\Delta_{K}^*Q_{\Hc^K}.
$$
\end{itemize}
\end{theorem}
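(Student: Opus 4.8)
The plan is to dispose of the three assertions in order, concentrating essentially all of the real work in the diffeomorphism claim of part~(i); parts~(ii) and~(iii) then reduce to bookkeeping built on it. For part~(i), I would first check that $\check K=(\Pi,\widehat K)$ lands in $\Dc_{\zeta_K}$ and covers $\id_Z$: if $\xi\in D_s$ then $\widehat K(\xi)=K_\xi\in\widehat K(D_s)\subseteq\overline{\widehat K(D_s)}=\zeta_K(s)$, so $\check K(\xi)=(s,\widehat K(\xi))\in\Dc_{\zeta_K}$ and $\Pi^{\zeta_K}\circ\check K=\Pi$. Smoothness of $\check K$ follows from smoothness of $\Pi$ together with Lemma~\ref{smoothhat}, which gives smoothness of $\widehat K$. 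On the fiber over $s$ the map is $\widehat K\vert_{D_s}\colon D_s\to\zeta_K(s)$, and here the admissibility hypothesis enters: by condition~(b) of Definition~\ref{adm_def} the operator $K(s,s)$ is invertible, so Lemma~\ref{equivKinverse} makes $\widehat K\vert_{D_s}$ injective with closed range; closedness forces $\widehat K(D_s)=\overline{\widehat K(D_s)}=\zeta_K(s)$, whence $\widehat K\vert_{D_s}$ is a continuous linear bijection between Hilbert spaces and thus a topological isomorphism by the open mapping theorem. Since $\Pi^{\zeta_K}\colon\Dc_{\zeta_K}\to Z$ is, by the construction following Definition~\ref{Quanmap}, exactly the pull-back of $\Pi_{\Hc^K}$ through $\zeta_K$, this already exhibits $\Delta_{\zeta_K}=(\check K,\id_Z)$ as a fiberwise-invertible smooth vector bundle morphism onto that pull-back.

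The delicate point, and the one I expect to be the main obstacle, is upgrading ``fiberwise topological isomorphism'' to ``diffeomorphism'', i.e.\ producing a \emph{smooth} inverse. I would argue locally: fixing a trivialization $\Psi_V\colon V\times\Ec\to\Pi^{-1}(V)$ of $\Pi$ and a trivialization $(\Pi^{\zeta_K})^{-1}(\Omega)\simeq\Omega\times\Fc$ of the pull-back bundle near a given point, in these coordinates $\check K$ takes the form $(s,e)\mapsto(s,A(s)e)$ with $A\colon V\cap\Omega\to\Bc(\Ec,\Fc)$ smooth and each $A(s)$ a topological isomorphism. Then $\check K^{-1}$ is locally $(s,f)\mapsto(s,A(s)^{-1}f)$, which is smooth because inversion $\GL(\Ec,\Fc)\to\GL(\Fc,\Ec)$ is a smooth (indeed real-analytic) map, being a restriction of Banach-algebra inversion. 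Patching these local inverses yields a global smooth $\check K^{-1}$, so $\check K$ is a diffeomorphism and $\Delta_{\zeta_K}$ a vector bundle isomorphism, as claimed.

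For part~(ii) I would simply compute that $\delta_K=\psi_K\circ\check K$: indeed $\psi_K(\check K(\xi))=\psi_K(\Pi(\xi),\widehat K(\xi))=(\zeta_K(\Pi(\xi)),\widehat K(\xi))=\delta_K(\xi)$. Since $\check K$ is smooth by part~(i) and $\psi_K=\psi_{\zeta_K}$ is a smooth vector bundle morphism by the construction after Definition~\ref{Quanmap}, the composite $\delta_K$ is smooth, so $\Delta_K$ is a smooth quasimorphism; the commutativity of the two squares is immediate from $\Pi^{\zeta_K}\circ\check K=\Pi$ and $\Pi_{\Hc^K}\circ\psi_K=\zeta_K\circ\Pi^{\zeta_K}$, both read off directly from the defining formulas.

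Finally, for part~(iii) I would first verify the one new identity $K_{\zeta_K}=(\psi_K,\zeta_K)^*Q_{\Hc^K}$ straight from Definition~\ref{involanti}: under the natural identification of the fiber of $\Dc_{\zeta_K}$ at $s$ and the fiber of $\Tc(\Hc^K)$ at $\zeta_K(s)$ with the same subspace $\zeta_K(s)\subseteq\Hc^K$, the fiber map $(\psi_K)_s$ is the identity of $\zeta_K(s)$, hence $(\psi_K)_s^*=\id_{\zeta_K(s)}$ and $(\psi_K,\zeta_K)^*Q_{\Hc^K}(s,t)=(\psi_K)_s^*\,(p_{\zeta_K(s)}\vert_{\zeta_K(t)})\,(\psi_K)_t=p_{\zeta_K(s)}\vert_{\zeta_K(t)}=K_{\zeta_K}(s,t)$. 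Then, since $\Delta_K=(\psi_K,\zeta_K)\circ(\check K,\id_Z)$ as quasimorphisms by part~(ii), the contravariant functoriality of the kernel pull-back (a one-line check from Definition~\ref{involanti} using $(\delta_2\circ\delta_1)_s^*=(\delta_1)_s^*(\delta_2)_{\zeta_1(s)}^*$) gives $\Delta_K^*=\Delta_{\zeta_K}^*\circ(\psi_K,\zeta_K)^*$. Combining this with $K=\Delta_K^*Q_{\Hc^K}$ from Theorem~\ref{univinvol} produces the displayed chain $K=\Delta_{\zeta_K}^*K_{\zeta_K}=\Delta_{\zeta_K}^*(\psi_K,\zeta_K)^*Q_{\Hc^K}=\Delta_K^*Q_{\Hc^K}$, completing the argument.
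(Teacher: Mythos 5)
Your proposal is correct and follows essentially the same route as the paper's proof: smoothness of $\check K$ via Lemma~\ref{smoothhat}, fiberwise bijectivity and the topological isomorphism via Lemma~\ref{equivKinverse} plus the open mapping theorem, smoothness of the inverse via the local representation of $\check K$ as a smooth family of invertible operators, and then parts (ii) and (iii) by the factorization $\delta_K=\psi_K\circ\check K$ and the functoriality of kernel pull-backs. The only difference is that you spell out a few steps the paper leaves implicit (smoothness of Banach-space inversion, and the explicit check that $(\psi_K,\zeta_K)^*Q_{\Hc^K}=K_{\zeta_K}$), which is harmless.
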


\begin{proof}
(i) It follows by Lemma~\ref{smoothhat} that $\widehat{K}$ is smooth, 
hence also $\check K$ is smooth. 

To prove that $\check K$ is bijective, 
first suppose that $\xi,\eta\in D$ and $\check K(\xi)=\check K(\eta)$. 
This means that
$\Pi(\xi)=s=\Pi(\eta)$  with $s\in Z$, and therefore $\xi,\eta\in D_s$. 
Since $\widehat K$ is injective on $D_s$ by Lemma~\ref{equivKinverse}, 
we deduce that $\xi=\eta$. 
Now, take an element $(s,x)$ with
$s\in Z$ and $x\in\zeta_K(s)=\widehat K(D_s)$, where the equality of these two subsets holds because
$\widehat K(D_s)$ is closed by Lemma~\ref{equivKinverse} again. 
Thus there exists
$\xi\in D_s$ such that $x=\widehat K(\xi)$ and therefore
$(s,x)=(\Pi(\xi),\widehat K(\xi))$. 
We have proved that $\check K$ is both injective and surjective. 
In conclusion, $\check K$ is a bijection between the bundles $D$ and 
$\Dc_{\zeta_K}$, 
and the fact that $\Delta_{\zeta_K}$ is a vector bundle morphism follows readily. 

Moreover, since 
$\widehat K\colon D_s\to\widehat K(D_s)$ is continuous for every $s\in Z$, 
it follows by Lemma~\ref{equivKinverse} that we can use the open mapping theorem to obtain that 
$\widehat K$ is a fiberwise topological isomorphism. 
Since $K$ is an admissible reproducing kernel, it follows 
that the mapping 
$\zeta_K\colon D\to\Gr(\Hc^K)$ is smooth, and then the discussion after Definition~\ref{Quanmap} 
provides local trivializations for the bundle $\Pi^{\zeta_K}\colon\Dc_{\zeta_K}\to Z$. 
It then follows that $\check K$ is represented locally (as in \cite[Ch. III, \S 1, VB~Mor~2]{La01}) by a smooth mapping 
with values invertible operators on the typical fiber, 
and then its pointwise inverse is also smooth.
This shows that the inverse mapping of the bijection $\check K$ is also smooth, 
hence $\check K$ is a diffeomorphism.

(ii) Straightforward consequence of (i), since $\psi_K\circ(\Pi,\widehat K)=(\zeta_K,\widehat K)=\delta_K$.

(iii) The proof is similar to the one of Theorem~\ref{univinvol},   
by also using  
$(\psi_K,\zeta_K)\circ\Delta_{\zeta_K}=(\psi_K,\zeta_K)\circ((\Pi,\widehat K),\id_Z)$ 
and 
$(\psi_K\circ(\Pi,\widehat K),\zeta_K)=(\delta_K,\zeta_K)=\Delta_K$. 
\end{proof}

%%%%%%%%%%%%%%%%%%%%%%%%%
%%%%%%%%%%%%%%%%%%%%%%
%\bigskip
\section{Connections associated with reproducing kernels}\label{sectconnect}
%%%%%%%%%%%%%%%%%%%%%%%%%
%%%%%%%%%%%%%%%%%%%%%%

\subsection{Linear connections induced by reproducing kernels}

Let $\Pi\colon D\to Z$ be a Hermitian vector bundle endowed with an admissible  
reproducing kernel $K$. 
Let $\Delta_K=(\delta_K,\zeta_K)$ be the classifying quasimorphism for $K$ 
constructed in Theorem~\ref{univinvol}, which is smooth by Theorem~\ref{isoquantum} 
since $K$ is supposed to be admissible. 
Assume for a moment that $\Sc_0$ in $\Gr(\Hc^K)$ is such that 
$\zeta_K(Z)\subseteq\Gr_{\Sc_0}(\Hc^K)$ (it holds for instance if $Z$ is connected), 
and therefore $\delta_K(D)\subseteq\Tc_{\Sc_0}(\Hc^K)$, 
so $\Delta_K$ is a morphism from $\Pi$ to the universal bundle $\Pi_{\Sc_0}$ at $\Sc_0\subseteq\Hc^K$:
$$
\begin{CD}
D @>{\delta_K}>> \Tc_{\Sc_0}(\Hc^K)\\ 
@V{\Pi}VV @VV{\Pi_{\Hc^K,\Sc_0}}V \\ 
Z @>{\zeta_K}>> \Gr_{\Sc_0}(\Hc^K) \\ 
\end{CD}
$$
Let $E_p$ be the conditional expectation naturally associated to 
the orthogonal projection $p:=p_{\Sc_0}\colon\Hc^K\to\Sc_0$. 
Then let
$\Phi_{\Sc_0}$ denote the connection induced by $E_p$ on the bundle $\Pi_{\Hc^K,\Sc_0}$ 
as in Definition \ref{tautoconn}, i.e., 
$$
\Phi_{\Sc_0}\colon[((u,X),(x,y))]\mapsto[((u,0),(x,E_p(X)x+y))].
$$
Since we are assuming that $K(s,s)$ is invertible on $D_s$ for all $s\in Z$, 
we have that the map $\delta_K$ is a fiberwise linear 
isomorphism from $D_s$ onto $\widehat K(D_s)$  
and then the following definition is consistent, according to 
Proposition~\ref{prop13}.

\begin{definition}\label{pullconn}
\normalfont
Under the above conditions, we call  {\it connection induced by the admissible reproducing kernel} $K$ 
the pull-back connection $\Phi_K$ on $\Pi$ given by
$$
\Phi_K:=(\Delta_K)^*(\Phi_{\Sc_0}).
$$
Note that condition $\zeta_K(Z)\subseteq\Gr_{\Sc_0}(\Hc^K)$, as prior to the definition, 
may always be assumed without loss of generality since, otherwise, 
one can consider partitioning $D$ into the (open) submanifolds 
$\zeta_K^{-1}(\Gr_{\Sc_0}(\Hc^K))$, define the connection on each of them, 
and then define the global connection on $D$ piecewise.  
\end{definition}

We compute now the covariant derivative 
for the connection induced by a reproducing kernel in the framework of Definition~\ref{pullconn}.

\begin{theorem}\label{deriv_prop3}
In the setting of Definition~\ref{pullconn},  
let $\nabla_K\colon\Omega^0(Z,D)\to\Omega^1(Z,D)$ be the covariant derivative 
for the connection induced by~$K$. 

If $\sigma\in\Omega^0(Z,D)$ has the property that there exists 
$\widetilde{\sigma}\in\Omega^0(\Gr_{\Sc_0}(\Hc^K),\Tc_{\Sc_0}(\Hc^K))$ 
such that $\delta_K\circ\sigma=\widetilde{\sigma}\circ\zeta_K$, 
then for $s\in Z$ and $X\in T_sZ$ we have 
$$
(\nabla\sigma)(X)=
K(s,s)^{-1}
\bigl(\underbrace{\bigl((p_{\zeta_K(s)}(\de(\widehat{K}\circ\sigma)(X)))\bigr)}_{\hskip50pt
\in\Hc^K\subseteq\Omega^0(Z,D)}(s)\bigr).
$$ 
\end{theorem}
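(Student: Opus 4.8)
The plan is to reduce the statement to the already-established formula for the covariant derivative on the tautological bundle (Proposition~\ref{univ_deriv}) via the pull-back compatibility of connections. Since $\Phi_K=(\Delta_K)^*(\Phi_{\Sc_0})$ by Definition~\ref{pullconn}, the covariant derivative $\nabla_K$ is the pull-back of the universal covariant derivative $\nabla$ on $\Tc_{\Sc_0}(\Hc^K)$ through the classifying quasimorphism $\Delta_K=(\delta_K,\zeta_K)$. The key structural input is that $\delta_K$ is a fiberwise topological isomorphism onto its range (this is where admissibility, via invertibility of $K(s,s)$ and Lemma~\ref{equivKinverse}, is used), so $\check K=(\Pi,\widehat K)$ is a diffeomorphism by Theorem~\ref{isoquantum}(i). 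The hypothesis $\delta_K\circ\sigma=\widetilde\sigma\circ\zeta_K$ is exactly the condition that the section $\sigma$ descends to a section $\widetilde\sigma$ of the tautological bundle along the quantization map, so the pull-back formula for covariant derivatives applies cleanly.

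First I would invoke the defining property of the pull-back connection: for a section $\sigma$ that is $\delta_K$-related to $\widetilde\sigma$, the pulled-back covariant derivative satisfies
$$
\delta_K\bigl((\nabla_K\sigma)(X)\bigr)=(\nabla\widetilde\sigma)(T\zeta_K(X))
$$
for $X\in T_sZ$, where $\nabla$ is the universal covariant derivative. Next I would apply Proposition~\ref{univ_deriv} to $\widetilde\sigma$, writing $\widetilde\sigma(\cdot)=(\,\cdot\,,F_{\widetilde\sigma}(\cdot))$ with $F_{\widetilde\sigma}\in\Ci(\Gr_{\Sc_0}(\Hc^K),\Hc^K)$, which gives
$$
(\nabla\widetilde\sigma)(T\zeta_K(X))=\bigl(\zeta_K(s),\,p_{\zeta_K(s)}(\de F_{\widetilde\sigma}(T\zeta_K(X)))\bigr).
$$
The relation $\delta_K\circ\sigma=\widetilde\sigma\circ\zeta_K$ translates on the level of the $\Hc^K$-valued components into $\widehat K\circ\sigma=F_{\widetilde\sigma}\circ\zeta_K$, so by the chain rule $\de F_{\widetilde\sigma}(T\zeta_K(X))=\de(\widehat K\circ\sigma)(X)$. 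Substituting yields that the $\Hc^K$-component of $\delta_K((\nabla_K\sigma)(X))$ equals $p_{\zeta_K(s)}(\de(\widehat K\circ\sigma)(X))$.

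The final step is to invert $\delta_K$ fiberwise to recover $(\nabla_K\sigma)(X)$ itself as an element of $D_s$. Here I would identify the fiberwise inverse explicitly: on $D_s$ the operator $\delta_K|_{D_s}=\widehat K|_{D_s}$ has adjoint-type inverse governed by $K(s,s)$, and the pull-back of the Hermitian structure together with the reproducing property \eqref{reprokernel_eq2} shows that recovering the $D_s$-component from its image under $\widehat K$, after projecting onto $\zeta_K(s)$, is implemented by the factor $K(s,s)^{-1}$. This produces
$$
(\nabla_K\sigma)(X)=K(s,s)^{-1}\bigl(\bigl(p_{\zeta_K(s)}(\de(\widehat K\circ\sigma)(X))\bigr)(s)\bigr),
$$
as claimed. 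I expect the main obstacle to be the bookkeeping in this last inversion step: one must carefully track the identification of $D_s$ with $\widehat K(D_s)=\zeta_K(s)$ under $\widehat K$, verify that the pulled-back connection $\Phi_K$ indeed corresponds to $\Phi_{\Sc_0}$ under this identification (which is the content of Proposition~\ref{prop13} invoked in Definition~\ref{pullconn}), and confirm that the Hermitian adjoint appearing in the pull-back of the kernel (cf.\ \eqref{pullbackrep}) yields precisely $K(s,s)^{-1}$ rather than some other operator. The differential-geometric content is entirely carried by Proposition~\ref{univ_deriv}; the remaining work is the functorial transport through $\Delta_K$ and the fiberwise algebra of the invertible operator $K(s,s)$.
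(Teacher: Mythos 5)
Your proposal is correct and follows essentially the same route as the paper: Proposition~\ref{deriv_prop1} (pull-back compatibility of covariant derivatives) to transport the problem to the tautological bundle, Proposition~\ref{univ_deriv} for the universal covariant derivative, the identity $F_{\widetilde\sigma}\circ\zeta_K=\widehat K\circ\sigma$ plus the chain rule, and finally fiberwise inversion of $\widehat K\vert_{D_s}$. The only point you leave as an anticipated ``obstacle'' is resolved in the paper by a one-line observation rather than any adjoint computation: since $\widehat K(\xi)=K(\cdot,s)\xi$ for $\xi\in D_s$, the section $\widehat K((\nabla\sigma)(X))$ evaluated at $s$ equals $K(s,s)\bigl((\nabla\sigma)(X)\bigr)$, so applying $K(s,s)^{-1}$ after evaluation at $s$ gives exactly the stated formula.
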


An equivalent way of expressing the conclusion of Theorem~\ref{deriv_prop3} 
is that for $s\in Z$, $t_0>0$, $\gamma\in\Ci((-t_0,t_0),Z)$ with $\gamma(0)=s$ 
and $\sigma\in\Omega^0(Z,D)$ we have the formula 
$$
(\nabla\sigma)(\dot\gamma(0))
=K(s,s)^{-1}\bigl(p_{\zeta_K(s)}\Bigl(\frac{\de}{\de t}
\Big\vert_{t=0}\widehat{K}(\sigma(\gamma(t)))\Bigr)(s)\bigr)
$$
which only requires the derivative at $t=0$ 
of the function $\widehat{K}\circ\sigma\circ\gamma\colon(-t_0,t_0)\to\Hc^K$ 
and then to take the orthogonal projection of the derivative on the subspace $\zeta_K(s)$ of~$\Hc^K$. 

\begin{proof}[Proof of Theorem~\ref{deriv_prop3}]
Recall that for every $\xi\in D$ we have
$$\widehat{K}(\xi)=K_\xi
=K(\cdot,\Pi(\xi))\xi\text{ and }\delta_K(\xi)=(\zeta_K(\Pi(\xi)),\widehat{K}(\xi)).$$ 
Let $s\in Z$ and $X\in T_sZ$ arbitrary.

Since $\delta_K\circ\sigma=\widetilde{\sigma}\circ\zeta_K$, 
it follows by Proposition~\ref{deriv_prop1} that 
$\delta_K\circ\nabla\sigma=\widetilde{\nabla}\widetilde{\sigma}\circ T(\zeta_K)$, 
where $\widetilde{\nabla}$ denotes the covariant derivative 
for the universal connection on the tautological vector bundle 
$\Pi_{\Hc,\Sc_0}\colon\Tc_{\Sc_0}(\Hc^K)\to\Gr_{\Sc_0}(\Hc^K)$. 
In particular  
\begin{equation}\label{deriv_prop3_proof_eq1}
(\zeta_K(s),\widehat{K}((\nabla\sigma)(X)))
=\delta_K((\nabla\sigma)(X))
=\widetilde{\nabla}\widetilde{\sigma}(T(\zeta_K)X).
\end{equation}
On the other hand, since $\widetilde{\sigma}\in\Omega^0(\Gr_{\Sc_0}(\Hc^K),\Tc_{\Sc_0}(\Hc^K))$, 
there exists a uniquely determined function  $F_{\widetilde{\sigma}}\in\Ci(\Gr_{\Sc_0}(\Hc^K),\Hc^K)$ 
with $\widetilde{\sigma}(\cdot)=(\cdot,F_{\widetilde{\sigma}}(\cdot))$. 
Then by Proposition~\ref{univ_deriv} we obtain 
\begin{equation}\label{deriv_prop3_proof_eq2}
\widetilde{\nabla}\widetilde{\sigma}(T(\zeta_K)X)
=(\zeta_K(s),p_{\zeta_K(s)}(\de F_{\widetilde{\sigma}}(T(\zeta_K)X))).
\end{equation}
By using $\delta_K\circ\sigma=\widetilde{\sigma}\circ\zeta_K$ again, 
we obtain  
$F_{\widetilde{\sigma}}\circ\zeta_K=\widehat{K}\circ\sigma\colon Z\to\Hc^K$, 
hence by differentiation we obtain 
\begin{equation}\label{deriv_prop3_proof_eq3}
\de F_{\widetilde{\sigma}}\circ T(\zeta_K)=\de(\widehat{K}\circ\sigma).
\end{equation}
It now follows by \eqref{deriv_prop3_proof_eq1}--\eqref{deriv_prop3_proof_eq3} 
that 
$$
\widehat{K}((\nabla\sigma)(X))=p_{\zeta_K(s)}(\de(\widehat{K}\circ\sigma)(X))\in\Hc^K.
$$
Both sides of the above equality  are sections in the bundle $\Pi\colon D\to Z$, 
and moreover $(\nabla\sigma)X\in D_{s}$. 
By evaluating the left-hand side at the point $s\in Z$ we obtain  
the value $K(s,s)\bigl((\nabla\sigma)X)\in D_s$. 
Hence by evaluating both sides of the above equality at $s$ 
and then applying the operator $K(s,s)^{-1}$ to both sides of the equality 
obtained after the evaluation we obtain  
$(\nabla\sigma)(X)=K(s,s)^{-1}
\bigl((p_{\zeta_K(s)}(\de(\widehat{K}\circ\sigma)(X)))(s)\bigr)$,  
as we wanted to show.
\end{proof}

%%%%%%%%%%%%%%
\subsection{Categorial aspects}
%%%%%%%%%%%%%

We will now discuss the functorial features of the above correspondence between reproducing kernels and linear connections, 
and it will follow that this correspondence is unique in a quite natural way.  
The precise statement actually concerns the relationship between various categories:  
\begin{itemize}
\item $\textbf{Hilb}$ is the category whose objects are the complex Hilbert spaces and the morphisms are the linear isometries. 
\item $\textbf{Herm}$ is the category whose objects are the Hermitian vector bundles and the morphisms are the 
bundle morphisms which are fiberwise unitary operators; 
\item $\textbf{Kernh}$ is the category whose objects are the admissible reproducing kernels on Hermitian bundles. 
The morphisms of this category are defined by 
$$
\Hom_{\textbf{Kernh}}(K_1,K_2)=\{\Theta\in\Hom_{\textbf{Herm}}(\Pi_1,\Pi_2)\mid \Theta^*(K_2)=K_1\}
$$ 
whenever $K_j$ is an admissible reproducing kernel on the Hermitian vector bundle~$\Pi_j$ for $j=1,2$. 
The morphisms $\Theta=(\delta,\zeta)$ in $\Hom_{\textbf{Kernh}}(K_1,K_2)$ satisfy that 
$\delta$ is a fiberwise diffeomorphism. This follows from the identity 
$K_1(t,t)=\delta^*\circ K_2(\zeta(t),\zeta(t))\circ\delta$, $t\in Z_1$, where 
$\Pi_1\colon D_1\to Z_1$, since $K_1, K_2$ are admissible.
\item $\textbf{LinConnect}$ is the category whose objects are the linear connections on Hermitian vector bundles 
and the morphisms are defined by 
$$
\text{Hom}_{\textbf{LinConnect}}(\Phi_1,\Phi_2)
=\{\Theta\in\Hom_{\textbf{Herm}}(\Pi_1,\Pi_2)\mid \Theta^*(\Phi_2)=\Phi_1\}
$$
whenever $\Phi_j$ is a linear connection on the Hermitian vector bundle~$\Pi_j$ for $j=1,2$. 
Note that a morphism $\Theta$ in $\text{Hom}_{\textbf{LinConnect}}(\Phi_1,\Phi_2)$ 
must be fiberwise diffeomorphic for the condition $\Theta^*(\Phi_2)=\Phi_1$ to make sense.    
\item $\Qc\colon\Hilb\to\textbf{Kernh}$ is the functor that constructs the universal reproducing kernel 
on the tautological bundle for a given Hilbert space.  
\item $\Phi\colon\Hilb\to\textbf{LinConnect}$ is the functor that constructs 
the universal connection on the tautological bundle for a given Hilbert space.  
\item ${\mathbb F}$ are the forgetful functors that associate to every kernel or connection 
the bundle where these objects are living.    
\item And finally, ${\mathbb A}\colon\textbf{Kernh}\to\textbf{LinConnect}$ 
is the functor defined by means of Definition~\ref{pullconn} 
on the level of objects of these categories and which acts identically on the level of morphisms. 
\end{itemize}

Here is the categorial characterization of the functor ${\mathbb A}$ 
from the category of the admissible reproducing kernels to the one of 
linear connections on Hermitian vector bundles.

\begin{theorem}\label{canonical}
There exists a unique functor from $\textbf{Kernh}$ into $\textbf{LinConnect}$ such that 
the diagram 
$$
\xymatrix{
& \textbf{LinConnect} \ar[dr]^{\mathbb F} & \\
\textbf{Hilb} \ar[ur]^{\Phi}\ar%@<1ex>
[dr]^{\Qc}  & & \textbf{Herm} \\
& \textbf{Kernh}  \ar@{-->}[uu] \ar[ur]_{\mathbb F} &}
$$
is commutative, and that functor is ${\mathbb A}\colon\textbf{Kernh}\to\textbf{LinConnect}$. 
\end{theorem}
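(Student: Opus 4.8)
The plan is to read both the functor $\mathbb A$ and the commutativity constraints through the single structural identity $\Phi_K=\Delta_K^*\Phi_{\Hc^K}$ of Definition~\ref{pullconn}, which presents the induced connection as a pull-back of a universal connection along the classifying morphism $\Delta_K$, exactly paralleling the kernel identity $K=\Delta_K^*Q_{\Hc^K}$ from Theorem~\ref{univinvol}. Existence and uniqueness will both be consequences of this factorization together with the naturality of the universal connection.

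First I would dispatch the easy commutativities. By Theorem~\ref{isoquantum} the classifying morphism $\Delta_K$ is smooth and fiberwise invertible, so $\Phi_K$ is a genuine linear connection on the same bundle $\Pi$ that carries $K$; hence $\mathbb F\circ\mathbb A=\mathbb F$ and $\mathbb A$ acts as the identity on underlying bundle morphisms. The remaining triangle $\mathbb A\circ\Qc=\Phi$ amounts to $\Phi_{Q_{\Hc}}=\Phi_{\Hc}$, and this holds because the reproducing-kernel Hilbert space of the universal kernel $Q_{\Hc}$ is canonically $\Hc$ and its classifying morphism is (isomorphic to) the identity of $\Tc(\Hc)$, so pulling back the universal connection returns it unchanged.

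The substantive point is that $\mathbb A$ respects morphisms: if $\Theta=(\delta,\zeta)$ satisfies $\Theta^*K_2=K_1$, then $\Theta^*\Phi_{K_2}=\Phi_{K_1}$. The device is to lift $\Theta$ to the reproducing-kernel Hilbert spaces. The assignment $K_{1,\xi}\mapsto K_{2,\delta(\xi)}$ extends to a linear isometry $V\colon\Hc^{K_1}\to\Hc^{K_2}$, the verification of $(K_{1,\xi}\mid K_{1,\eta})_{\Hc^{K_1}}=(K_{2,\delta(\xi)}\mid K_{2,\delta(\eta)})_{\Hc^{K_2}}$ being exactly where $K_1=\Theta^*K_2$ enters. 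This $V$ intertwines the classifying data, $V\circ\widehat{K_1}=\widehat{K_2}\circ\delta$ and $\overline V\circ\zeta_{K_1}=\zeta_{K_2}\circ\zeta$, so the induced tautological morphism $\Tc(V)\colon(\Sc,x)\mapsto(V\Sc,Vx)$ obeys $\Tc(V)\circ\Delta_{K_1}=\Delta_{K_2}\circ\Theta$. I would then combine three facts: the naturality of the universal connection under isometries, $\Tc(V)^*\Phi_{\Hc^{K_2}}=\Phi_{\Hc^{K_1}}$; the transitivity of the pull-back of connections from Appendix~\ref{Sect2}; and the two factorizations $\Phi_{K_i}=\Delta_{K_i}^*\Phi_{\Hc^{K_i}}$. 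Reading $\Theta^*\Phi_{K_2}=\Theta^*\Delta_{K_2}^*\Phi_{\Hc^{K_2}}=(\Delta_{K_2}\circ\Theta)^*\Phi_{\Hc^{K_2}}=(\Tc(V)\circ\Delta_{K_1})^*\Phi_{\Hc^{K_2}}=\Delta_{K_1}^*\Phi_{\Hc^{K_1}}=\Phi_{K_1}$ then gives functoriality; preservation of identities and composites is automatic since $\mathbb A$ is the identity on morphisms.

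For uniqueness, let $\mathbb G$ be any functor making the diagram commute. Commuting with the forgetful functors forces $\mathbb G$ to preserve the underlying bundle and to act identically on morphisms, so only its value on objects is at issue. Here I would use the classifying morphism a final time: by Theorems~\ref{univinvol} and~\ref{isoquantum}, $\Delta_K$ is a morphism $K\to Q_{\Hc^K}$ in $\textbf{Kernh}$ with $\Delta_K^*Q_{\Hc^K}=K$, so functoriality of $\mathbb G$ together with $\mathbb G(Q_{\Hc^K})=\Phi_{\Hc^K}$ (the triangle $\mathbb G\circ\Qc=\Phi$) yields $\mathbb G(K)=\Delta_K^*\Phi_{\Hc^K}=\Phi_K=\mathbb A(K)$, whence $\mathbb G=\mathbb A$. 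I expect the genuine obstacle to be the naturality $\Tc(V)^*\Phi_{\Hc^{K_2}}=\Phi_{\Hc^{K_1}}$ for a possibly non-surjective isometry $V$: one must check that $V$ induces a bona fide morphism of the reductive structures attached to the conditional expectations $E_{p}$ so that Proposition~\ref{redumorf} applies, and that the base-points $\Sc_0$ fixed per connected component (as in Remark~\ref{unitaucon}) are matched up compatibly under $\overline V$.
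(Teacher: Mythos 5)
Your proposal follows the same architecture as the paper's proof: the two commutativity checks ($\mathbb F\circ{\mathbb A}={\mathbb F}$ trivially, ${\mathbb A}\circ\Qc=\Phi$ because the classifying morphism of $Q_{\Hc}$ is essentially the identity of $\Tc(\Hc)$), and a uniqueness argument that exploits $\Delta_K\in\Hom_{\textbf{Kernh}}(K,Q_{\Hc^K})$, the forced identity ${\mathbb B}(\Delta_K)=\Delta_K$, and ${\mathbb B}(Q_{\Hc^K})=\Phi_{\Hc^K}$ to conclude ${\mathbb B}(K)=\Delta_K^*(\Phi_{\Hc^K})={\mathbb A}(K)$ --- this is word for word the paper's reasoning. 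The one place you go beyond the paper is your middle paragraph: the paper simply declares ${\mathbb A}$ to be a functor acting identically on morphisms and never verifies that $\Theta^*K_2=K_1$ implies $\Theta^*\Phi_{K_2}=\Phi_{K_1}$, whereas you supply this via the isometry $V\colon\Hc^{K_1}\to\Hc^{K_2}$, $K_{1,\xi}\mapsto K_{2,\delta(\xi)}$, the intertwining $\Tc(V)\circ\Delta_{K_1}=\Delta_{K_2}\circ\Theta$, and transitivity of pull-backs. That extra step is needed for the existence half of the statement, so your version is the more complete one. Concerning the obstacle you flag at the end: it is real in the sense that Proposition~\ref{redumorf} does not apply directly, since a non-surjective isometry $V$ induces no homomorphism $U(\Hc^{K_1})\to U(\Hc^{K_2})$ of the ambient reductive structures; but the naturality $\Tc(V)^*\Phi_{\Hc^{K_2}}=\Phi_{\Hc^{K_1}}$ can be checked directly from Proposition~\ref{univ_deriv}, using that an isometry satisfies $p_{V\Sc}\circ V=V\circ p_{\Sc}$ for every closed subspace $\Sc$ (because $V(\Sc^\perp)\perp V\Sc$), while the base-point issue is absorbed by the $U(\Hc^{K_2})$-equivariance of Remark~\ref{unitaucon}. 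So your proof is correct and, on this point, strictly more careful than the paper's.
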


\begin{proof}
We first check that the diagram in the statement is commutative if 
the role of the dotted arrow is played by the functor~${\mathbb A}$. 
In fact, we have ${\mathbb F}\circ{\mathbb A}={\mathbb F}$ since 
the functor~${\mathbb A}$ takes an admissible reproducing kernel 
on some Hermitian vector bundle to a linear connection 
on the same Hermitian vector bundle. 
Moreover, it follows directly by Definition~\ref{pullconn} 
(see also \cite[Prop. 4.1]{BG08}, \cite[Prop. 4.5 and Ex.]{BG09}) that ${\mathbb A}\circ\Qc=\Phi$. 

To prove the uniqueness assertion, let us assume that 
${\mathbb B}\colon\textbf{Kernh}\to\textbf{LinConnect}$ 
is a functor such that ${\mathbb F}\circ{\mathbb B}={\mathbb F}$ and 
${\mathbb B}\circ\Qc=\Phi$. 
The latter equality shows that for every Hilbert space $\Hc$ 
we have ${\mathbb B}(Q_{\Hc})=\Phi_{\Hc}$, 
hence ${\mathbb B}(Q_{\Hc})={\mathbb A}(Q_{\Hc})$. 
Thus the functors ${\mathbb B}$ and ${\mathbb A}$ 
agree on the universal reproducing kernels. 

Now let $K$ be an arbitrary admissible reproducing kernel 
on a Hermitian vector bundle $\Pi$. 
The classifying morphism $\Delta_K\in\Hom_{\textbf{Herm}}(\Pi,\Tc(\Hc^K))$ 
has the property $\Delta_K^*(Q_{\Hc^K})=K$, 
hence we have $\Delta_K\in\Hom_{\textbf{Kernh}}(K,Q_{\Hc^K})$. 
By using the functor ${\mathbb B}\colon\textbf{Kernh}\to\textbf{LinConnect}$, 
it follows  
${\mathbb B}(\Delta_K)\in
\Hom_{\textbf{LinConnect}}({\mathbb B}(K),{\mathbb B}(Q_{\Hc^K}))$. 
On the other hand, by using the equality ${\mathbb F}\circ{\mathbb B}={\mathbb F}$ 
on morphisms in the category $\textbf{Kernh}$, we get ${\mathbb B}(\Delta_K)=\Delta_K$; 
moreover, we established above that 
${\mathbb B}(Q_{\Hc})=\Phi_{\Hc}$ for every 
Hilbert space $\Hc$, hence in particular ${\mathbb B}(Q_{\Hc^K})=\Phi_{\Hc^K}$. 
We thus obtain $\Delta_K\in\Hom_{\textbf{LinConnect}}({\mathbb B}(K),\Phi_{\Hc^K})$. 
By the definition of the morphisms in the category $\textbf{LinConnect}$, 
this means that ${\mathbb B}(K)=\Delta_K^*(\Phi_{\Hc^K})$, 
hence we have ${\mathbb B}(K)={\mathbb A}(K)$. 
Thus the functors ${\mathbb B}$ and ${\mathbb A}$ agree on the level of objects 
in the category $\textbf{Kernh}$. 

Furthermore, it follows by the condition ${\mathbb F}\circ{\mathbb B}={\mathbb F}$ 
that the functor~${\mathbb B}$ acts identically on the morphisms 
of the category $\textbf{Kernh}$, just as the functor ${\mathbb A}$ does. 
Thus eventually ${\mathbb B}={\mathbb A}$. 
\end{proof}

%%%%%%%%%%%%%%%%%%%%%%
\section{Examples}\label{examples}
%%%%%%%%%%%%%%%%%
%%%%%%%%%%%%%%%%%%%%%%%

We will discuss here the linear connections associated with three types of examples, namely  
the usual operator-valued reproducing kernels (Subsection~\ref{ktb}), 
then the reproducing kernels on homogeneous vector bundles 
that occur in the geometric representation theory of Banach-Lie groups (Subsection~\ref{subsect_homog}), 
and finally the reproducing kernels related to the dilation theory of completely positive mappings (Subsection~\ref{subsect_cp}). 

%%%%%%%%%%% 
\subsection{Reproducing kernels on trivial bundles}\label{ktb}
%%%%%%%%%%%%
We illustrate here the theory established in the preceding sections by giving some results involving classical reproducing kernels on trivial vector bundles. 

\medskip
(a) \textit{General case.}

Let $\Xc$ be a set and $\Vc$ be a complex Hilbert space. 
Assume that  $\kappa\colon \Xc\times \Xc\to\Bc(\Vc)$ is the reproducing kernel 
of a Hilbert space denoted by $\Hc^\kappa$. 
This means in particular that, for every $x_i\in \Xc$, $v_i\in \Vc$, $i=1,\dots,n$,
$$
\sum_{i,j=1}^n(\kappa(x_i,x_j)v_j\mid v_i)_{\Vc}\ge0,
$$
and that $\Hc^\kappa$ is the Hilbert space of $\Vc$-valued functions on $\Xc$ generated by the space 
$\hbox{span}\{\kappa_x\otimes v:x\in \Xc, v\in \Vc\}$, where
$$
\kappa_x\otimes v:=\kappa(\cdot,x)v\colon \Xc\to \Vc,
$$
with respect to the inner product given by 
$$
(\kappa_x\otimes v\mid\kappa_y\otimes w)_{\Hc^\kappa}
:=(\kappa(y,x)v\mid w)_\Vc,
$$ 
see \cite[Theorem I.1.4, (2) and (a)]{Ne00}. 

In the sequel we assume that $\Xc$ is a Banach manifold, with $\kappa$ smooth, such that $\kappa(x,x)$ is invertible in $\Bc(\Vc)$ for all $x$,  which corresponds to a reproducing kernel 
on the trivial Hermitian bundle $\Xc\times\Vc\to\Xc$.
In this special case we now compute the covariant derivative for the connection induced by the reproducing kernel $K$ 
 when moreover $\dim\Vc=1$, hence we may assume $\Vc=\C$. 
 In the following statement we use subscripts to denote the values of differential 1-forms on $\Xc$.

\begin{proposition}\label{codetri}
For every smooth section $\sigma(\cdot)=(\cdot,F_{\sigma}(\cdot))$ 
of the trivial bundle $\Xc\times\C\to\Xc$, 
where $F_{\sigma}\in\Ci(\Xc,\C)$, we have 
$$
(\forall x\in\Xc)\quad (\nabla\sigma)_x
=(x,(\de F_{\sigma})_x
+F_{\sigma}(x)\frac{\partial_2\kappa(x,x)}{\kappa(x,x)}
\in\{x\}\times\Bc(T_x\Xc,\C).
$$
Hence the covariant derivative $\nabla$ can be identified with 
the first order linear differential operator $\nabla\colon\Omega^0(\Xc,\C)\to\Omega^1(\Xc,\C)$ 
defined by 
\begin{equation}\label{explicit}
(\forall F\in\Ci(\Xc,\C))\quad \nabla F=\de F+\alpha_\kappa\cdot F 
\end{equation} 
where the differential 1-form $\alpha_\kappa\in\Omega^1(\Xc,\C)$ 
is defined by $\displaystyle(\alpha_\kappa)_x=\frac{\partial_2\kappa(x,x)}{\kappa(x,x)}$ for all $x\in\Xc$. 
\end{proposition}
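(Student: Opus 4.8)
The plan is to specialize the general covariant-derivative formula of Theorem~\ref{deriv_prop3} (equivalently, its pointwise reformulation along a curve) to the scalar kernel $\kappa$, where everything reduces to a one-dimensional projection computation. Write $\kappa_x:=\kappa(\cdot,x)\in\Hc^\kappa$, so that for the section $\sigma(\cdot)=(\cdot,F_\sigma(\cdot))$ one has $\widehat K(\sigma(x))=F_\sigma(x)\kappa_x$; since $\Vc=\C$, the subspace $\zeta_K(x)=\overline{\widehat K(D_x)}=\C\kappa_x$ is one-dimensional. Here admissibility gives $\kappa(x,x)\ne0$, so $\kappa_x\ne0$ and $K(x,x)^{-1}$ is multiplication by $\kappa(x,x)^{-1}$. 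The orthogonal projection onto this line is $v\mapsto\frac{(v\mid\kappa_x)_{\Hc^\kappa}}{(\kappa_x\mid\kappa_x)_{\Hc^\kappa}}\kappa_x$, where $(\kappa_x\mid\kappa_x)_{\Hc^\kappa}=\kappa(x,x)$ by the defining inner product of $\Hc^\kappa$.

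First I would differentiate $\widehat K\circ\sigma\colon\Xc\to\Hc^\kappa$, a smooth $\Hc^\kappa$-valued map by Lemma~\ref{smoothhat}; the Leibniz rule gives, for $X\in T_x\Xc$,
$$
\de(\widehat K\circ\sigma)(X)=(\de F_\sigma)_x(X)\,\kappa_x+F_\sigma(x)\,\partial_2\kappa(\cdot,x)(X),
$$
where $\partial_2\kappa(\cdot,x)(X)\in\Hc^\kappa$ is the derivative of $x\mapsto\kappa_x$ (it lies in $\Hc^\kappa$ because $x\mapsto\widehat K((x,1))=\kappa_x$ is smooth into $\Hc^\kappa$). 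The first summand already lies on the line $\C\kappa_x$. For the second, the key input is the reproducing property $(f\mid\kappa_x)_{\Hc^\kappa}=f(x)$, which applied to $f=\partial_2\kappa(\cdot,x)(X)$ yields $(\,\partial_2\kappa(\cdot,x)(X)\mid\kappa_x)_{\Hc^\kappa}=\partial_2\kappa(x,x)(X)$. Hence
$$
p_{\zeta_K(x)}\bigl(\de(\widehat K\circ\sigma)(X)\bigr)=\Bigl[(\de F_\sigma)_x(X)+F_\sigma(x)\,\frac{\partial_2\kappa(x,x)(X)}{\kappa(x,x)}\Bigr]\kappa_x.
$$

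To finish I would feed this into Theorem~\ref{deriv_prop3}: evaluate the section $c\,\kappa_x$ (with $c$ the bracketed scalar) at the point $x$, obtaining $(x,c\,\kappa(x,x))\in D_x$, and then apply $K(x,x)^{-1}$, i.e.\ multiplication by $\kappa(x,x)^{-1}$. The factors $\kappa(x,x)$ cancel, leaving $(\nabla\sigma)(X)=(x,c)$, which is precisely the asserted formula once $X$ is reinstated as the argument of the $1$-form $(\nabla\sigma)_x$. The final identification $\nabla F=\de F+\alpha_\kappa\cdot F$ is then a restatement under the trivialization $\Xc\times\C$, reading off $(\alpha_\kappa)_x=\partial_2\kappa(x,x)/\kappa(x,x)$.

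The main thing to be careful about—more bookkeeping than deep obstacle—is the interplay of conventions: one must check that the reproducing property produces $\partial_2\kappa(x,x)$ (derivative in the second slot, then restriction to the diagonal) with no spurious complex conjugate, which hinges on the inner product of $\Hc^\kappa$ being linear in the first variable and on $\widehat K$ being $\C$-linear on fibers. A secondary point is the applicability of Theorem~\ref{deriv_prop3}: its factorization hypothesis need not hold for an arbitrary $\sigma$, but since $\nabla_K$ is a first-order operator its value at $x$ depends only on the $1$-jet of $\sigma$ at $x$, so one may invoke the pointwise (curve) reformulation stated right after that theorem, which is what the computation above actually uses.
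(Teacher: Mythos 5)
Your proposal is correct and follows essentially the same route as the paper: identify $\zeta_K(x)=\C\kappa_x$, write the rank-one projection as $v\mapsto\kappa(x,x)^{-1}(v\mid\kappa_x)\kappa_x$, use the reproducing property to turn the inner product into $\partial_2\kappa(x,x)$, and finish by evaluating at $x$ and applying $K(x,x)^{-1}$ via Theorem~\ref{deriv_prop3}. The only (immaterial) difference is that the paper projects $\widehat K(\sigma(\gamma(t)))$ first and differentiates in $t$ afterwards, whereas you differentiate first and then project; these commute since $p_{\zeta_K(x)}$ is a fixed bounded operator.
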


\begin{proof}
For arbitrary $x\in \Xc$ we have $\zeta_K(x)=\C\cdot\kappa_x$, where $\kappa_x=\kappa(\cdot,x)\in\Hc^\kappa$ 
and $(\kappa_x\mid\kappa_x)_{\Hc^\kappa}=\kappa(x,x)$. 
Therefore 
$$
(\forall x\in\Xc)\quad p_{\zeta_K(x)}=\frac{(\cdot\mid\kappa_x)}{\kappa(x,x)}\kappa_x.
$$
Moreover 
$$
(\forall x\in\Xc)(\forall v\in\C)\quad \widehat{K}(x,v)=v\kappa_x=v\kappa(\cdot,x). $$
Now consider a smooth path $\gamma\in\Ci((-t_0,t_0),\Xc)$ with $\gamma(0)=x\in\Xc$ 
and a smooth section $\sigma(\cdot)=(\cdot,F_{\sigma}(\cdot))$, 
where $F_{\sigma}\in\Ci(\Xc,\C)$.  
For arbitrary $t\in(-t_0,t_0)$ we have 
$\widehat{K}(\sigma(\gamma(t)))
=\widehat{K}(\gamma(t),F_{\sigma}(\gamma(t)))
=F_{\sigma}(\gamma(t))\kappa(\cdot,\gamma(t))
=F_{\sigma}(\gamma(t))\kappa_{\gamma(t)}$ 
and 
$$
\begin{aligned}
p_{\zeta_K(x)}(\widehat{K}(\sigma(\gamma(t))))
&=\frac{(\widehat{K}(\sigma(\gamma(t)))\mid\kappa_x)}{\kappa(x,x)}\kappa_x
=\frac{(F_{\sigma}(\gamma(t))\kappa_{\gamma(t)}\mid\kappa_x)}{\kappa(x,x)}\kappa_x \\
&=F_{\sigma}(\gamma(t))\frac{\kappa(x,\gamma(t))}{\kappa(x,x)}\kappa_x
\end{aligned}
$$
hence 
$$
\frac{\de}{\de t}\Big\vert_{t=0}p_{\zeta_K(x)}(\widehat{K}(\sigma(\gamma(t))))
=\Bigl(\de F_{\sigma}(\dot\gamma(0))
+F_{\sigma}(x)\frac{(\partial_2\kappa(x,x))(\dot\gamma(0))}{\kappa(x,x)}
\Bigr)\kappa_x. 
$$
Therefore, by using Theorem~\ref{deriv_prop3} 
it follows that for any smooth section $\sigma(\cdot)=(\cdot,F_{\sigma}(\cdot))$, 
where $F_{\sigma}\in\Ci(Z,\C)$, we have 
$$
(\forall x\in\Xc)\quad (\nabla\sigma)_x
=(x,(\de F_{\sigma})_x
+F_{\sigma}(x)\frac{\partial_2\kappa(x,x)}{\kappa(x,x)}\in\{x\}\times\Bc(T_x\Xc,\C)
$$
as we wanted to show. 
\end{proof}

We next recall a few classical reproducing kernels on one-dimensional trivial vector bundles 
arising in function theory, 
so that their linear connections can be computed using the above formula~\eqref{explicit}.

(b)\textit{Classical (scalar-valued) reproducing kernels:} 
Bergman and Hardy spaces on the disk and the half-plane; Fock space.

(b.1) For $\nu>1$, the corresponding {\it Bergman space} is the Hilbert space 
$$
{\mathfrak B}^2_\nu(\D)=\Bigl\{f\in\Oc(\D){\Big\vert}
\frac{\nu-1}{\pi}\int_\D\vert f(z)\vert^2 (1-\vert z\vert^2)^{\nu-2}\,dz<\infty\Bigr\},
$$
where $dz$ is the Lebesgue measure on $\D$. 
Clearly, the polynomial functions belong to ${\mathfrak B}^2_\nu(\D)$.
In the \lq\lq limiting case" $\nu=1$ one obtains the {\it Hardy space}  
$$
{\mathfrak H}^2(\D)=\Bigl\{f\in\Oc(\D){\Big\vert}
\sup_{0<r<1}\frac{1}{2\pi}\int_0^{2\pi}\vert f(re^{i\theta})\vert^2\ d\theta<\infty\Bigr\}.
$$  
Let us denote for a while both the Bergman and Hardy spaces on the unit disk by  
the same symbol $\Hc_\nu(\D)$, $\nu\ge1$ (the Hardy space corresponds to $\nu=1$). 
These are reproducing kernel Hilbert spaces with kernels 
$$
K_\D^{(\nu)}(s,t)=\frac{1}{(1-\overline t s)^\nu} \quad (s,t\in\D;\nu\ge1); 
$$
see \cite{Hi08}, \cite{Ne00}, for instance.

In a similar way, for the upper halfplane $\Ub:=\{z\in\C:\Im z>0\}$, define the Bergman space 
$$
{\mathfrak B}^2_\nu(\Ub)=\Bigl\{F\in\Oc(\Ub){\Big\vert}
\frac{(\nu-1)}{\pi}\int_\D\vert F(z)\vert^2 (\Im z)^{\nu-2}\,dz<\infty\Bigr\}.
$$
It turns out that ${\mathfrak B}^2_\nu(\Ub)$ is unitarily equivalent to ${\mathfrak B}^2_\nu(\D)$ through the Cayley transform $\displaystyle\varphi(z):=\frac{z-i}{z+i}$, ($z\in\Ub$). 
The reproducing kernel of ${\mathfrak B}^2_\nu(\Ub)$ is given by
$$
K_\Ub^{(\nu)}(z,w):=\frac{1}{4}\frac{(2i)^\nu}{(z-\overline w)^\nu},  
\quad (z,w\in\Ub);
$$
see \cite[p. 16]{Hi08}. 
The Hardy space on $\Ub$,
$$ 
{\mathfrak H}^2(\Ub)=\Bigl\{F\in\Oc(\Ub){\Big\vert}
\sup_{y>0}\frac{1}{2\pi}\int_{-\infty}^\infty \vert F(x+iy)\vert^2 dx<\infty\Bigr\}, 
$$
is also a reproducing kernel Hilbert space with kernel
$$
K_\Ub^{(1)}(z,w):=\frac{i}{2(z-\overline w)},
\quad (z,w\in\Ub);
$$
see \cite[p. 19]{Hi08}.

Unlike the Bergman case, the image of ${\mathfrak H}^2(\Ub)$ via the unitary isomorphism induced by the Cayley transform $\varphi$ is not all of ${\mathfrak H}^2(\D)$, but just a closed proper subspace of it, see \cite[Ch. 8]{Ho62}. 
Despite this, we use the symbol $\Hc_\nu(\Ub)$ to refer both the Bergman and the Hardy spaces on $\Ub$. 
As in the unit disk case, the Hardy space corresponds to $\nu=1$.

We should note that in the above expressions of the kernels we omitted the number $1/\pi$ that occurs in the expressions given in \cite{Hi08}, since we preferred to keep it in the integral conditions defining the Bergman and Hardy spaces.

(b.2) Let $\Ec$ be a complex vector space and let $\beta\colon\Ec\times\Ec\to\C$ be a positive semidefinite hermitian form on $\Ec$. Then $K_{\Fc,\beta}(z,w):=e^{\beta(z,w)}$, for $z,w\in\Ec$, is a reproducing kernel which generates a Hilbert space $\Hc^{K_{\Fc,\beta}}$ denoted by  
$\Fc(\Ec,\beta):=\Hc^{K_{\Fc,\beta}}$, and which is called {\it Fock space} associated with $\Ec$ and $\beta$. 
See \cite[p. 38]{Ne00}.

When $\Ec=\C^n$ is finite-dimensional, the Fock space can be realized as 
$$
\Fc(\Ec,\beta)=\Bigl\{F\in\Oc(\Ec){\Big\vert}\frac{1}{\pi^n}\int_\Ec \vert F(z)\vert^2\, e^{-\beta(z,z)} dz<\infty\Bigr\}.
$$
See for instance \cite[pp. 25, 26]{Hi08}.

On account of the above examples, the former point (a) applies to:

(1) The set $\Xc:=\D$, the Hilbert spaces 
$\Vc:=\C$ and $\Hc^\kappa:=\Hc_\nu(\D)$, with the trivial bundle 
$\D\times\C\to\D$ and kernel $\kappa:=K_\D^{(\nu)}$.

(2) $\Xc:=\Ub$, 
$\Vc:=\C$ and $\Hc^\kappa:=\Hc_\nu(\Ub)$ , with trivial bundle 
$\Ub\times\C\to\Ub$ and kernel $\kappa:=K_\Ub^{(\nu)}$.

(3) $\Xc:=\Ec$, 
$\Vc:=\C$ and $\Hc^\kappa:=\Fc(\Ec,\beta)$ , with trivial bundle 
$\Ec\times\C\to\Ec$ and kernel $\kappa:=K_{\Fc,\beta}$.
(The prototypical example of Fock space occurs when $\Ec=\C^n$ and 
$\beta(z,w)=\sum_{j=1}^n\, z_j\overline{w_j}$, $z=(z_1,\dots,z_n), 
w=(w_1,\dots,w_n)\in\C^n$, $n\in\N$; see \cite[p. 10]{Ne00}.)

\smallbreak
 
It is straightforward to compute the first-order differential operator \eqref{explicit} 
in this case, in any of the above examples: 

(1) For $K_\D^{(\nu)}(s,t)=(1-\overline t s)^{-\nu}$ generating $\Hc_\nu(\D)$, $\nu\ge1$, we have 
$$
(\nabla\sigma)_s (z)=\de F_\sigma(z)-\frac{\nu\,s F_\sigma(s)}{(1-\vert s\vert^2)}\,\overline z
$$
for $\sigma\equiv F_\sigma\in\C^\infty(\D,\C)$, $s\in\D$, $z\in\C$.

(2) For $K_\Ub^{(\nu)}(z,w)=\frac{1}{4}(2i)^\nu(z-\overline w)^{-\nu}$ generating $\Hc_\nu(\Ub)$, $\nu\ge1$, we have 
$$
(\nabla\sigma)_z (\lambda)=\de F_\sigma(\lambda)-\frac{\nu F_\sigma(z)}{\Im z}\,\overline \lambda
$$
for $\sigma\equiv F_\sigma\in\C^\infty(\Ub,\C)$, $z\in\D$, $\lambda\in\C$.

(3) For $K_{\Fc,\beta}(z,w)=\exp({\sum_{j=1}^n\, z_j\overline{w_j}})$, $z,w\in\C^n$, generating the Fock space on $\C^n$, one gets
$$
(\nabla\sigma)_z (\lambda)=\de F_\sigma(\lambda)
+F_\sigma(z)\sum_{j=1}^n\, z_j\overline{\lambda_j}
$$
for $\sigma\equiv F_\sigma\in\C^\infty(\C^n,\C)$, 
$z=(z_j)_{j=1}^n, \lambda=(\lambda_j)_{j=1}^n\in\C^n$.

%%%%%%%%%%%%%%%%%% 
\subsection{Reproducing kernels on homogeneous vector bundles}\label{subsect_homog}
%%%%%%%%%%%%%%%%% 
 
Let $G_A$ be a Banach-Lie group with a Banach-Lie subgroup $G_B$.
Let 
$\rho_A\colon G_A\to \Bc(\Hc_A)$ and $\rho_B\colon G_B\to \Bc(\Hc_B)$ 
be uniformly continuous unitary representations 
with 
$\Hc_B\subseteq\Hc_A$, $\rho_B(g)=\rho_A(g)|_{\Hc_B}$ for
$g\in G_B$ and
$\Hc_A=\overline{\spann}\rho_A(G_A)\Hc_B$.

Let us consider the homogeneous vector bundle 
$\Pi_\rho\colon G_A\times_{G_B}\Hc_B\to G_A/G_B$, induced by the representation $\rho_B$.  
We endow $\Pi_\rho$ with 
the 
Hermitian structure given by
$$ 
\left([(u,f)],[(u,h)]\right)_s:=(f\mid h)_{\Hc}, \quad u\in G_A,s:=u G_B, f,h\in\Hc_B.
$$
Let $P\colon\Hc_A\to\Hc_B$ be the orthogonal projection. 
We define
the reproducing kernel $K_\rho$ on the vector bundle
$\Pi_\rho\colon D=G_A\times_{G_B}\Hc_B\to G_A/G_B$ by
\begin{equation}\label{homoker}
K_\rho(uG_B,vG_B)[(v,f)]=[(u,P(\rho_A(u^{-1})\rho_A(v)f))],
\end{equation}
for $uG_B,vG_B\in D$ and $f\in\Hc_B$
(see \cite {BG08}). 
There exists
a unitary operator $W\colon\Hc^{K_\rho}\to\Hc_A$ such that
$W(K_\eta)=\pi_A(v)g$ whenever $\eta=[(v,g)]\in D$; 
see the end of the proof of \cite[Proposition 4.1]{BG08}.

\begin{proposition}\label{smoothomog}
In the above setting, $K_\rho$ is an admissible reproducing kernel. 
\end{proposition}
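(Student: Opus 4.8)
The plan is to verify the three defining conditions of admissibility from Definition~\ref{adm_def} for $K_\rho$: (a) smoothness of $K_\rho$ as a section of $\Hom(p_2^*\Pi_\rho,p_1^*\Pi_\rho)$, (b) invertibility of $K_\rho(s,s)$ for every $s$, and (c) smoothness of $\zeta_{K_\rho}$. Condition~(b) is immediate: setting $v=u$ in \eqref{homoker} and using that $P$ restricts to the identity on $\Hc_B$ gives $K_\rho(uG_B,uG_B)[(u,f)]=[(u,Pf)]=[(u,f)]$, so that $K_\rho(s,s)=\id_{D_s}$ is invertible at every $s\in G_A/G_B$.

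For condition~(a) I would compute $K_\rho$ in a local trivialization. Fix a local smooth section $s\colon\Omega\to G_A$ of the bundle projection $G_A\to G_A/G_B$ over an open set $\Omega\subseteq G_A/G_B$, giving the trivialization $(z,f)\mapsto[(s(z),f)]$ of $\Pi_\rho$ over $\Omega$. Reading off \eqref{homoker}, and using $\rho_A(s(z_1)^{-1})=\rho_A(s(z_1))^*$, the local representative of $K_\rho(z_1,z_2)$ is the operator
$$
\Hc_B\to\Hc_B,\qquad f\mapsto P\,\rho_A(s(z_1))^*\rho_A(s(z_2))f.
$$
Since $\rho_A$ is uniformly continuous it is smooth, and $s$ is smooth; as composition, adjunction and multiplication of bounded operators are smooth and $P$ is fixed, the map $(z_1,z_2)\mapsto P\,\rho_A(s(z_1))^*\rho_A(s(z_2))\vert_{\Hc_B}$ is a smooth $\Bc(\Hc_B)$-valued function. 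Hence $K_\rho$ is smooth. This is precisely the argument (applicable also to $Q_{\Hc,\Sc_0}$) alluded to in the discussion after Definition~\ref{Quanmap}.

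The main point is condition~(c), which in the present infinite-dimensional situation does not follow formally from (a)--(b) as it does for finite-dimensional fibers. Here I would use the unitary operator $W\colon\Hc^{K_\rho}\to\Hc_A$ recalled just before the statement, which satisfies $W(K_{[(v,g)]})=\rho_A(v)g$. Since $W$ preserves closures of subspaces, it carries $\zeta_{K_\rho}(uG_B)=\overline{\widehat K(D_{uG_B})}$ onto $\overline{\rho_A(u)\Hc_B}=\rho_A(u)\Hc_B$, the orthogonal projection onto which is $\rho_A(u)P\rho_A(u)^*$. For $g\in G_B$ the unitary $\rho_A(g)$ preserves $\Hc_B$, hence also $\Hc_B^\perp$, so it commutes with $P$; therefore $\rho_A(ug)P\rho_A(ug)^*=\rho_A(u)P\rho_A(u)^*$ and the projection-valued map $u\mapsto\rho_A(u)P\rho_A(u)^*$ descends to $G_A/G_B$. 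It is smooth on $G_A$ by smoothness of $\rho_A$, and since $G_A\to G_A/G_B$ is a smooth submersion the induced map on $G_A/G_B$ is smooth. As a smooth field of orthogonal projections determines a smooth map into the Grassmannian, it follows that $\zeta_{K_\rho}$ is smooth, which is condition~(c). I expect the obstacle to lie exactly in this last step, namely translating the explicit projection formula through $W$ and confirming that the resulting projection-valued map is smooth and descends correctly; once this is in place the three conditions are established and $K_\rho$ is admissible.
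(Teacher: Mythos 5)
Your proposal is correct and follows essentially the same route as the paper: verify (a) via smoothness of $(u,v)\mapsto P\rho_A(u^{-1})\rho_A(v)\vert_{\Hc_B}$, note (b) since $K_\rho(s,s)=\id_{D_s}$, and for (c) transport $\zeta_{K_\rho}$ through the unitary $W$ to the map $uG_B\mapsto\rho_A(u)\Hc_B$ and use the quotient submersion $G_A\to G_A/G_B$. The only (harmless) divergence is in the last step of (c): the paper concludes smoothness by factoring through the submersion $\GL(\Hc_A)\to\Gr(\Hc_A)$, $V\mapsto V(\Hc_B)$, and citing \cite[Cor.~8.4]{Up85}, whereas you instead realize the Grassmannian via the explicit projection-valued map $u\mapsto\rho_A(u)P\rho_A(u)^*$, which is an equally valid way to finish.
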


\begin{proof} 
We set $K:=K_\rho$ and will check that the conditions of Definition~\ref{adm_def} are satisfied. 

(a) The mapping 
$$G_A\times G_A\to\Bc(\Hc_B),\quad (u,v)\mapsto P\rho_A(u^{-1})\rho_A(v)\vert_{\Hc_B} $$ 
is smooth, hence the reproducing kernel $K_\rho$ is smooth.

(b) For all $s\in G_A/G_B$ we have $K(s,s)=\id_{D_s}$, hence $K(s,s)$ is invertible. 

(c) We have to prove that the mapping
$$
\zeta_K\colon G_A/G_B\to\Gr(\Hc^K),
\quad s\mapsto \widehat{K}(D_s)
$$
is smooth.
The unitary operator $W\colon\Hc^K\to\Hc_A$
induces a diffeomorphism
$$
\widetilde{W}\colon\Gr(\Hc^K)\to\Gr(\Hc_A),
\quad \Sc\mapsto W(\Sc),
$$
hence it will be enough to show that the mapping
$\widetilde{W}\circ\zeta_K\colon G_A/G_B\to\Gr(\Hc_A)$
is smooth.
To this end, note that for every
$s=uG_B\in G_A/G_B$ we have
$$
\widetilde{W}\circ\,\zeta_K(uG_B)
=W(\{K_{[(u,f)]}\mid [(u,f)]\in D_s\})
=\{\rho_A(u)f\mid f\in \Hc_B\} 
=\rho_A(u)\Hc_B
$$
hence there exists a commutative diagram
$$
\begin{CD}
G_A @>{\rho_A}>> \GL(\Hc_A)\\
@V{u\mapsto uG_B}VV  @VV{V\mapsto V(\Hc_B)}V \\
G_A/G_B @>{\widetilde{W}\circ\zeta_K}>> \Gr(\Hc_A)
\end{CD}
$$
whose vertical arrows are submersions.
It then follows by \cite[Cor. 8.4]{Up85}
that the mapping $\widetilde{W}\circ\zeta_K$ is smooth, and we are done.
\end{proof}

Using  $W$ as in the above proof we may suppose that $\Hc^{K_\rho}=\Hc_A$. 
Then for the classifying morphism of the admissible reproducing kernel $K_\rho$ 
we have $\Delta_{K_\rho}\equiv\Delta_\rho:=(\delta_\rho,\zeta_\rho)$, 
where   
$$
\begin{aligned}
\delta_\rho \colon G_A\times_{G_B}\Hc_B\to \Tc_{\Hc_B}(\Hc_A),
\quad &[(u,f)]\mapsto (\rho_A(u)\Hc_B,\rho_A(u)f), \\
\zeta_\rho \colon G_A/G_B \to \Gr_{\Hc_B}(\Hc_A),\quad & u G_B\mapsto \rho_A(u)\Hc_B.
\end{aligned} 
$$
Since $K_\rho$ is admissible, 
both these mappings are smooth by Theorem~\ref{isoquantum}.

In the following we will describe the linear connection associated with 
the admissible reproducing kernel $K_\rho$. 
In particular, we will show  how an application of the pull-back operation to 
the universal vector bundle induces a linear connection on a homogeneous vector bundle 
that may not be endowed with a reductive structure.   
It is straighforward to check that the map introduced in the following definition 
is indeed a linear connection on the homogeneous bundle 
$\Pi_\rho\colon G_A\times_{G_B}\Hc_B\to G_A/G_B$. 

\begin{definition}\label{homoconnection}
\normalfont
The \emph{natural connection} on the homogeneous Hermitian bundle $\Pi_\rho$ is the smooth mapping 
$\Phi_{\rho}\colon T(G_A\times_{G_B}\Hc_B)\to T(G_A\times_{G_B}\Hc_B)$, given for every $g\in G_A$, $X\in\Gg_A$ and 
$f,h\in\Hc_B$ by
$$
\Phi_{\rho}\colon[((g,X),(f,h))]\mapsto[((g,0),(f,P(\de\rho(X)f)+h))].
$$
\end{definition}

The above definition can also be derived from the classifying morphism~$\Delta_\rho$ of~$\Pi_\rho$, 
as the following result shows.   

\begin{proposition}\label{pullHomon}
For $\delta_{\rho}$ and $\zeta_{\rho}$ as above, 
the connection on the homogeneous bundle 
$G_A\times_{\G_B}\Hc_B\to G_A/G_B$ obtained as 
the pull-back $(\Delta_{\rho})^*(\Phi_{E_P})$ of the universal connection $\Phi_{E_P}$ 
through the morphism $\Delta_{\rho}=(\delta_{\rho},\zeta_{\rho})$ coincides with 
the connection~$\Phi_\rho$ above, 
$$\Phi_{\rho}=(\Delta_{\rho})^*(\Phi_{E_P}).$$
\end{proposition}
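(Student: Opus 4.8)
The plan is to follow the strategy of the proof of Proposition~\ref{redumorf}: since $K_\rho$ is admissible (Proposition~\ref{smoothomog}), the first component $\delta_\rho$ of the classifying morphism is a fiberwise linear isomorphism by Theorem~\ref{isoquantum}, so the pull-back $(\Delta_\rho)^*(\Phi_{E_P})$ is well defined and, in accordance with Definition~\ref{pullcon}, is the unique linear connection $\Phi$ on $\Pi_\rho$ satisfying the intertwining relation $T\delta_\rho\circ\Phi=\Phi_{E_P}\circ T\delta_\rho$. Thus it suffices to verify that $\Phi_\rho$ itself satisfies
$$
T\delta_\rho\circ\Phi_\rho=\Phi_{E_P}\circ T\delta_\rho,
$$
and then the conclusion $\Phi_\rho=(\Delta_\rho)^*(\Phi_{E_P})$ follows at once. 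Note that, unlike in Proposition~\ref{redumorf}, no reductive structure on $G_A$ is available, so the verification must be carried out directly for the explicitly defined $\Phi_\rho$ of Definition~\ref{homoconnection}.

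First I would compute the tangent map $T\delta_\rho$. Representing a tangent vector at $[(u,f)]$ by the velocity at $t=0$ of the curve $t\mapsto[(u\exp(tX),f+th)]$, that is, by $[((u,X),(f,h))]$, and using $\rho_A(u\exp(tX))=\rho_A(u)\exp(t\,\de\rho_A(X))$ together with the realization $\Tc_{\Hc_B}(\Hc_A)\equiv\U(\Hc_A)\times_{\Uc(P)}\Hc_B$ from Example~\ref{ex4}, one obtains
$$
\delta_\rho([(u\exp(tX),f+th)])=[(\rho_A(u)\exp(t\,\de\rho_A(X)),f+th)],
$$
whence, since $\rho_A$ is unitary (so $\de\rho_A(X)\in\ug(\Hc_A)$),
$$
T\delta_\rho([((u,X),(f,h))])=[((\rho_A(u),\de\rho_A(X)),(f,h))].
$$
This is the exact analogue of the formula for $T\delta$ in the proof of Proposition~\ref{redumorf}, with $\rho_A$ and $\de\rho_A$ playing the roles of $\alpha$ and $\de\alpha$.

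With this in hand the verification rests on one algebraic identity. Applying $\Phi_\rho$ first and then $T\delta_\rho$ gives
$$
(T\delta_\rho\circ\Phi_\rho)([((u,X),(f,h))])=[((\rho_A(u),0),(f,P(\de\rho_A(X)f)+h))],
$$
while applying $T\delta_\rho$ first and then $\Phi_{E_P}$ (Definition~\ref{tautoconn}, with $p=P$) gives
$$
(\Phi_{E_P}\circ T\delta_\rho)([((u,X),(f,h))])=[((\rho_A(u),0),(f,E_P(\de\rho_A(X))f+h))].
$$
The two right-hand sides coincide because for $f\in\Hc_B=\Ran P$ we have $Pf=f$ and $(\1-P)f=0$, so that $E_P(Y)f=(PYP+(\1-P)Y(\1-P))f=P(Yf)$ for every $Y\in\ug(\Hc_A)$; taking $Y=\de\rho_A(X)$ yields $E_P(\de\rho_A(X))f=P(\de\rho_A(X)f)$. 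This establishes the intertwining relation and hence the proposition.

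I expect the main obstacle to be the correct computation of $T\delta_\rho$: one must translate between the homogeneous-bundle coordinates for $\Pi_\rho$, built from $(G_A,G_B)$, and the tautological-bundle coordinates, built from $(\U(\Hc_A),\Uc(P))$, and check that $\rho_A$ carries the former into the latter compatibly with the identifications of the tangent bundles recorded before Theorem~\ref{conred}. Once $T\delta_\rho$ is pinned down, everything reduces to the projection identity $E_P(Y)f=P(Yf)$ on $\Hc_B$, which is the precise point where the defining formula for $\Phi_\rho$ and the structure of the universal connection $\Phi_{E_P}$ match.
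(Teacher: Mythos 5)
Your proposal is correct and follows essentially the same route as the paper: compute $T\delta_\rho([((g,X),(f,h))])=[((\rho_A(g),\de\rho_A(X)),(f,h))]$, invoke the explicit formula for $\Phi_{E_P}$, and reduce everything to the identity $E_P(Y)f=P(Yf)$ for $f\in\Hc_B$. The only cosmetic difference is that the paper writes the pull-back as the conjugation $(T\delta_\rho)^{-1}\circ\Phi_{E_P}\circ T\delta_\rho$ and simplifies it, whereas you verify the intertwining relation $T\delta_\rho\circ\Phi_\rho=\Phi_{E_P}\circ T\delta_\rho$ and appeal to uniqueness of the pull-back; these amount to the same computation.
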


\begin{proof}
Recall from Theorem~\ref{conred} that $\Phi_{E_P}$ is given by 
$$[((u,Y),(f,h))]\mapsto[((u,0),(E_P(Y)f+h))]$$
for $u\in U(\Hc_A)$, $Y\in\Bc(\Hc_A)$ and $f,h\in\Hc_B$. 
On the other hand, the tangent map $T\delta_\rho$ is given by
$$
T\delta_\rho\colon[((g,X),(f,h))]\mapsto[((\rho(g),\de\rho(X)),(f,h))], 
$$
for $g\in G_A$, $X\in\Gg_A$, $f,h\in\Hc_B$. 
Thus the %fiberwise 
composition 
$\Phi=(T\delta_\rho)^{-1}\circ \Phi_{E_P}\circ T\delta_\rho$ is 
$$
\Phi\colon[((g,X),(f,h))]\mapsto[((g,0),(f,E_P(\de\rho(X))f+h))].
$$
Finally, since $E_P(\de \rho(X))f=P(\de\rho(X)f)$, we obtain that $\Phi=\Phi_{\rho}$ 
as claimed.
\end{proof}

\begin{corollary}\label{homoconinv}
For $\delta_{K_\rho}$ and $\zeta_{K_\rho}$ as above, 
the connection $\Phi_{K_\rho}$ associated with $K_\rho$ on the homogeneous vector bundle 
$\Pi\colon G_A\times_{\G_B}\Hc_B\to G_A/G_B$ in the sense of Definition~\ref{pullconn} 
coincides with the natural connection $\Phi_\rho$ on $\Pi$ given by Definition~\ref{homoconnection}. 
\end{corollary}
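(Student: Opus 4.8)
The plan is to obtain the identity $\Phi_{K_\rho}=\Phi_\rho$ by combining the very definition of $\Phi_{K_\rho}$ (Definition~\ref{pullconn}) with the explicit pull-back computation already carried out in Proposition~\ref{pullHomon}, once I have checked that both constructions refer to the same classifying morphism and to the same universal connection on the tautological bundle.

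First I would recall, from the discussion following Proposition~\ref{smoothomog}, that after identifying $\Hc^{K_\rho}$ with $\Hc_A$ through the unitary operator $W$ the classifying quasimorphism attached to $K_\rho$ is precisely $\Delta_{K_\rho}=\Delta_\rho=(\delta_\rho,\zeta_\rho)$. Since $\zeta_\rho(uG_B)=\rho_A(u)\Hc_B$ and each $\rho_A(u)$ is unitary, the image of $\zeta_\rho$ is contained in the single connected component $\Gr_{\Hc_B}(\Hc_A)$ of $\Gr(\Hc_A)$; consequently in Definition~\ref{pullconn} I am entitled to take $\Sc_0=\Hc_B$, whose associated orthogonal projection is $p=p_{\Hc_B}=P$. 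That $\delta_\rho$ is fiberwise invertible, as required for the pull-back to be defined, is guaranteed by the admissibility of $K_\rho$ established in Proposition~\ref{smoothomog}, since $K_\rho(s,s)=\id_{D_s}$.

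With these choices the universal connection $\Phi_{\Sc_0}$ occurring in Definition~\ref{pullconn} is literally the connection $\Phi_{E_P}$ appearing in Proposition~\ref{pullHomon}: both are the connection on the tautological subbundle $\Pi_{\Hc_A,\Hc_B}$ induced by the conditional expectation $E_P(X)=PXP+(\1-P)X(\1-P)$, as one reads off by comparing the formula in Definition~\ref{tautoconn} with the one recalled in the proof of Proposition~\ref{pullHomon}. Hence Definition~\ref{pullconn} gives
$$
\Phi_{K_\rho}=(\Delta_{K_\rho})^*(\Phi_{\Sc_0})=(\Delta_\rho)^*(\Phi_{E_P}),
$$
and the right-hand side is identified with $\Phi_\rho$ by Proposition~\ref{pullHomon}, which yields the claim.

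The only point demanding care—and it is purely a matter of bookkeeping rather than a genuine analytic obstacle—is this identification of the two differently named objects $\Phi_{\Sc_0}$ and $\Phi_{E_P}$ as the same universal connection on the same tautological subbundle, together with the verification that the correct base point is $\Sc_0=\Hc_B$. Once these matchings are in place the corollary is immediate, being just a juxtaposition of Definition~\ref{pullconn} and Proposition~\ref{pullHomon}.
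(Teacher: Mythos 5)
Your argument is correct and follows the same route as the paper, which likewise deduces the corollary directly from Proposition~\ref{pullHomon} by observing that both $\Phi_{K_\rho}$ and $\Phi_\rho$ arise as the pull-back $(\Delta_\rho)^*(\Phi_{E_P})$. Your additional bookkeeping (identifying $\Delta_{K_\rho}$ with $\Delta_\rho$ via $W$, fixing $\Sc_0=\Hc_B$, and matching $\Phi_{\Sc_0}$ with $\Phi_{E_P}$) simply makes explicit what the paper leaves implicit.
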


\begin{proof}
This is an immediate consequence of Proposition~\ref{pullHomon}, 
since both connections $\Phi_{K_\rho}$ and $\Phi_\rho$ are given by the fiberwise composition 
$\Phi=(T\delta_K)^{-1}\circ \Phi_{E_P}\circ T\delta_K$.
\end{proof}

We now turn to computing 
the covariant derivative associated with the preceding connection $\Phi_\rho$. 
In the case of a reductive structure one can use Remark~\ref{MC_rem}. 
For the general case note that if $\gg_A$ and $\gg_B$ are the Lie algebras of 
$G_A$ and $G_B$, respectively, then the adjoint action of $G_B$ on $\gg_A$ 
gives rise to a linear action on the quotient $\gg_A/\gg_B$ and 
we can then form the homogeneous vector bundle 
$G_A\times_{G_B}(\gg_A/\gg_B)$, 
which is isomorphic to the tangent bundle $T(G_A/G_B)$.

For any closed linear subspace $\mg$ of $\gg_A$ such that $\gg_A=\gg_B\dotplus\mg$ 
we have a linear topological isomorphism $\mg\simeq\gg_A/\gg_B$, 
which gives rise to a natural linear action of $G_B$ on $\mg$, 
hence to a homogeneous vector bundle $\G_A\times_{G_B}\mg$ which can be identified with $T(G_A/G_B)$. 
Note that such a subspace $\mg$ always exists since $G_B$ is a Banach-Lie subgroup of $G_A$, 
see \cite[Prop. 8.13]{Up85}.
In the special case $G_B=\{\1\}$ we get the identification $T(G_A)=G_A\ltimes\gg_A$ (see Remark \ref{conninduc}), 
so for every smooth function $\tilde\sigma\colon G_A\to\Hc_B$ 
we have $\de\tilde\sigma\colon G_A\ltimes\gg_A\to\Hc_B$. 

\begin{proposition}\label{homogcovderiv}
Let $\phi\colon G_A\to\Hc_B$ be smooth such that 
$\phi(uw)=\rho_A(w)^{-1}\phi(u)$ for all $u\in G_A$ and $w\in G_B$  
and define the corresponding smooth section   
$\sigma\colon G_A/G_B\to G_A\times_{G_B}\Hc_B$ by $\sigma(uG_B):=[(u,\phi(u))]$ 
for all $u\in G_A$. 
If there exists a smooth section 
$\widetilde\sigma\colon\Gr_{\Hc_B}(\Hc_A)\to\Tc_{\Hc_B}(\Hc_A)$ 
such that 
 $\widetilde\sigma(\rho_A(u)\Hc_B):=(\rho_A(u)\Hc_B,\rho_A(u)\phi(u))$ for all $u\in G_A$,  
then for every tangent vector $[(u,X)]\in G_A\times_{G_B}\mg=T(G_A/G_B)$ 
we have 
$$(\nabla\sigma)([(u,X)])=[(u,\de\phi(u,X)+P(\de\rho_A(X)\phi(u)))].$$ 
\end{proposition}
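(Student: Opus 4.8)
The plan is to reduce the statement to the covariant-derivative formula for the universal connection, which is already available in \emph{Proposition~\ref{univ_deriv}}, by exploiting the fact that the connection $\Phi_\rho$ is a pull-back of the universal connection through the classifying morphism $\Delta_\rho$. By \emph{Corollary~\ref{homoconinv}} (together with \emph{Proposition~\ref{pullHomon}}) the connection associated with $K_\rho$ coincides with $\Phi_\rho$, which is the pull-back $(\Delta_\rho)^*(\Phi_{E_P})$ of the universal connection through $\Delta_\rho=(\delta_\rho,\zeta_\rho)$. Hence I can invoke \emph{Proposition~\ref{deriv_prop1}}, the compatibility of covariant derivatives with pull-backs, to transfer the computation from the homogeneous bundle to the tautological bundle. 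Concretely, the hypothesis that $\widetilde\sigma$ exists with $\delta_\rho\circ\sigma=\widetilde\sigma\circ\zeta_\rho$ is exactly what is needed so that $\delta_\rho\circ(\nabla\sigma)=(\widetilde\nabla\widetilde\sigma)\circ T(\zeta_\rho)$, where $\widetilde\nabla$ is the covariant derivative of the universal connection.

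First I would record the explicit forms of the relevant maps under the identification $\Hc^{K_\rho}=\Hc_A$ coming from the unitary $W$: namely $\zeta_\rho(uG_B)=\rho_A(u)\Hc_B$ and $\delta_\rho([(u,f)])=(\rho_A(u)\Hc_B,\rho_A(u)f)$, both smooth by \emph{Theorem~\ref{isoquantum}}. The section $\widetilde\sigma$ has the shape $\widetilde\sigma(\cdot)=(\cdot,F_{\widetilde\sigma}(\cdot))$ with $F_{\widetilde\sigma}(\rho_A(u)\Hc_B)=\rho_A(u)\phi(u)$; note $F_{\widetilde\sigma}\circ\zeta_\rho$ equals the $\Hc_A$-valued map $uG_B\mapsto\rho_A(u)\phi(u)$, which is the transcription of $\widehat{K}\circ\sigma$ under $W$. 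Then \emph{Proposition~\ref{univ_deriv}} gives, for the tangent vector $T(\zeta_\rho)[(u,X)]\in T_{\rho_A(u)\Hc_B}\Gr_{\Hc_B}(\Hc_A)$,
$$
\widetilde\nabla\widetilde\sigma\bigl(T(\zeta_\rho)[(u,X)]\bigr)
=\bigl(\rho_A(u)\Hc_B,\;p_{\rho_A(u)\Hc_B}\bigl(\de(F_{\widetilde\sigma}\circ\zeta_\rho)([(u,X)])\bigr)\bigr).
$$

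The core computation is then to differentiate $u\mapsto\rho_A(u)\phi(u)$ in the direction $[(u,X)]$ and to apply the projection. Using the product rule for the uniformly continuous representation $\rho_A$, the derivative of $\rho_A(u)\phi(u)$ splits as $\de\rho_A(X)\rho_A(u)\phi(u)+\rho_A(u)\de\phi(u,X)$; after transporting back through $\rho_A(u)^{-1}$ and using $p_{\rho_A(u)\Hc_B}=\rho_A(u)P\rho_A(u)^{-1}$ (since $\rho_A$ is unitary and $P$ projects onto $\Hc_B$), the projected derivative becomes $\rho_A(u)\bigl(\de\phi(u,X)+P(\de\rho_A(X)\phi(u))\bigr)$, where the $\de\phi(u,X)$ term survives the projection because it already lands in $\Hc_B$. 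Finally, undoing the diffeomorphism $\check K$ and using that $K_\rho(s,s)=\id_{D_s}$ (so the $K(s,s)^{-1}$ factor in \emph{Theorem~\ref{deriv_prop3}} is trivial here) yields the claimed formula $(\nabla\sigma)([(u,X)])=[(u,\de\phi(u,X)+P(\de\rho_A(X)\phi(u)))]$.

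The main obstacle I anticipate is the careful bookkeeping of the identifications — between $\Hc^{K_\rho}$ and $\Hc_A$ via $W$, between $T(G_A/G_B)$ and $G_A\times_{G_B}\mg$, and between the abstract tangent vector $[(u,X)]$ and its image $T(\zeta_\rho)[(u,X)]$ — so that the projection $p_{\rho_A(u)\Hc_B}$ is correctly conjugated and the equivariance hypothesis $\phi(uw)=\rho_A(w)^{-1}\phi(u)$ is used to guarantee that $\de(F_{\widetilde\sigma}\circ\zeta_\rho)$ is well defined on $T(G_A/G_B)$ rather than merely on $TG_A$. Everything else is a routine application of the chain rule and of \emph{Proposition~\ref{univ_deriv}}, so the essential work lies in verifying that the projection picks out exactly the $\Hc_B$-component and collapses to the stated expression.
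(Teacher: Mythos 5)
Your proposal follows essentially the same route as the paper's proof: transfer the computation to the tautological bundle via $\delta_\rho\circ\nabla\sigma=\widetilde\nabla\widetilde\sigma\circ T(\zeta_\rho)$ (Propositions~\ref{pullHomon} and~\ref{deriv_prop1}), apply Proposition~\ref{univ_deriv} to the function $F_{\widetilde\sigma}\circ\zeta_\rho\colon uG_B\mapsto\rho_A(u)\phi(u)$, differentiate along $t\mapsto u\exp_{G_A}(tX)$, and conjugate the projection using $p_{\rho_A(u)\Hc_B}=\rho_A(u)P\rho_A(u)^{-1}$. The only blemish is the operator order in your product rule --- the first term should read $\rho_A(u)\,\de\rho_A(X)\phi(u)$ rather than $\de\rho_A(X)\rho_A(u)\phi(u)$, since the tangent vector $[(u,X)]$ is represented by the curve $u\exp_{G_A}(tX)$ --- but the projected expression you then state is the correct one, so the argument goes through as in the paper.
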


\begin{proof}
Denote $Z=G_A/G_B$, and let $s=u G_B\in Z$ and $X\in T_s Z$ arbitrary. 
Let $h\in\Hc_B$ such that 
$(\nabla\sigma)X:=(\nabla\sigma)([(u,X)])=[(u,h)]$. 
Since $\delta_\rho\circ\sigma=\widetilde{\sigma}\circ\zeta_\rho$ 
it follows by Propositions \ref{pullHomon}~and~\ref{deriv_prop1} that 
$\delta_\rho\circ\nabla\sigma=\widetilde{\nabla}\widetilde{\sigma}\circ T(\zeta_\rho)$, 
where $\widetilde{\nabla}$ denotes the covariant derivative for the universal connection on 
the tautological vector bundle 
$\Pi_{\Hc_A,\Hc_B}\colon\Tc_{\Hc_B}(\Hc_A)\to\Gr_{\Hc_B}(\Hc_A)$. 
In particular  
\begin{equation}\label{deriv_prop316_proof_eq1}
(\rho_A(u)\Hc_B,\rho_A(u)h)
=\delta_\rho((\nabla\sigma)X)
=\widetilde{\nabla}\widetilde{\sigma}(T(\zeta_\rho)X).
\end{equation}
Let $F_{\widetilde\sigma}\in\Ci(\Gr_{\Hc_B}(\Hc_A),\Hc_A)$ such that 
$F_{\widetilde\sigma}(\rho_A(u)\Hc_B)=\rho_A(u)\phi(u)$ for all $u\in G_A$, so that 
$\widetilde{\sigma}(\cdot)=(\cdot,F_{\widetilde{\sigma}}(\cdot))$. 
Then by Proposition~\ref{univ_deriv} we obtain  
\begin{equation}\label{deriv_prop316_proof_eq2}
\widetilde{\nabla}\widetilde{\sigma}(T(\zeta_\rho)X)
=(\rho_A(u)\Hc_B,p_{\rho_A(u)\Hc_B}(\de F_{\widetilde{\sigma}}(T(\zeta_\rho)X))).
\end{equation}

Let $\Rc$ denote  the second component of $\delta_\rho$ (as in \cite{BG11}).
By using the equality $\delta_\rho\circ\sigma=\widetilde{\sigma}\circ\zeta_\rho$ 
again, we get 
$F_{\widetilde{\sigma}}\circ\zeta_\rho=\Rc\circ\sigma\colon Z\to\Hc_A$, 
hence by differentiation we obtain 
\begin{equation}\label{deriv_prop316_proof_eq3}
\de F_{\widetilde{\sigma}}\circ T(\zeta_\rho)=\de(\Rc\circ\sigma).
\end{equation}

It now follows by 
\eqref{deriv_prop316_proof_eq1}--\eqref{deriv_prop316_proof_eq3} 
that, for all $u\in G_A$ and $X\in\mg$,
\begin{equation}\label{deriv_prop316_proof_eq4}
\rho_A(u)h=p_{\rho_A(u)\Hc_B}(\de(\Rc\circ\sigma)(X))\in\Hc_A. 
\end{equation}

Now pick any $[(u,X)]\in G_A\times_{G_B}\mg=T(G_A/G_B)$ and set 
$u(t):=u\exp_{G_A}(tX)$, for all $t\in\R$.
Then 
$$
\de(\Rc\circ\sigma)(X)
=\frac{\de}{\de t}\Big\vert_{t=0}\Rc(\sigma(u(t))
=\rho_A(u)(\de\rho_A(X)\phi(u)+\de\phi(u,X)),
$$
and therefore, by using \eqref{deriv_prop316_proof_eq4}, we obtain 
$$
\begin{aligned}
h&=\rho_A(u)^{-1}p_{\rho_A(u)\Hc_B}\rho_A(u)(\de\rho_A(X)\phi(u)+\de\phi(u,X))\\
&=p_{\Hc_B}(\de\rho_A(X)\phi(u)+\de\phi(u,X))\\
&
=\de\phi(u,X)+p_{\Hc_B}(\de\rho_A(X)\phi(u))
\end{aligned}
$$
since $\phi\colon G_A\to\Hc_B$. 
%$\de\tilde\sigma(u,X)\in\Hc_B$. 
We also used the fact that  $p_{\rho_A(u)\Hc_B}
=\rho_A(u)p_{\Hc_B}\rho_A(u)^{-1}$ for all $u\in G_A$, since $\rho_A$ is a unitary representation.
Then, because of the way $h\in\Hc_B$ was chosen, we have 
$(\nabla\sigma)([(u,X)])
=[(u,\de\phi(u,X)+p_{\Hc_B}(\de\rho_A(X)\phi(u))]$, 
as asserted.
\end{proof}

\begin{remark}
\normalfont
Since the tautological bundle $\Pi_{\Hc,\Sc_0}\colon\Tc_{\Sc_0}(\Hc)\to\Gr_{\Sc_0}(\Hc)$ is diffeomorphic to its homogeneous version 
$\U (\Hc)\times_{\U(p_{\Sc_0})}\Sc_0\to\U (\Hc)/\U(p_{\Sc_0})$, the two expressions of the covariant derivative, associated with the natural connection on $\Pi_{\Hc,\Sc_0}$, given in Proposition \ref{univ_deriv} and Proposition \ref{homogcovderiv} must coincide.

In fact, using the notations of those propositions and identifying $u\Sc_0=u\U(p_{\Sc_0})$ we have that 
$F_\sigma(u\Sc_0)\equiv u\phi(u\Sc_0)$, where we are considering 
$\phi\colon\U (\Hc)/\U(p_{\Sc_0})\to\Sc_0$ rather than its $\U(p_{\Sc_0})$-equivariant version from $\U (\Hc)$ into $\Sc_0$. Then, for 
$X=uY\equiv[(u,Y)]\in T_{u\Sc_0}(\Gr_{\Sc_0}(\Hc))$ with $Y\in \Ker E_{p_{\Sc_0}}$, by differentiating in 
$F_\sigma(u\Sc_0)=u\phi(u\Sc_0)$ we obtain
$\de F_\sigma(X)=u(Y\tilde\phi(u)+\de\phi(u,Y))$, hence 
$$
\begin{aligned}
P_{u\Sc_0}(\de F_\sigma(X))&=P_{u\Sc_0}(u(Y\phi(u)+\de\phi(u,Y)))\\
&=uP_{\Sc_0}(Y\phi(u)+\de\phi(u,Y))\\
&\equiv[(u,P_{\Sc_0}(Y\phi(u))+\de\phi(u,Y))]
\end{aligned}
$$
as it was claimed.
\end{remark}

%%%%%%%%%%
\subsection{Differential geometric aspects of completely positive mappings}\label{subsect_cp}
%%%%%%%%%%

In this final part of the paper we will discuss 
%the geometry that underlies the completely positive maps. 
%We will begin this discussion by 
%showing 
some geometric interpretations of the completely positive mappings. 
%can be interpreted as a geometric object in at least two related but different ways. 
%(a) \textit{Completely positive mappings} (the fundamental dilation equation). 
More specifically, we will take a fresh look at the Stinespring dilations of completely positive maps 
from the perspective of the reproducing kernels and the corresponding covariant derivatives, 
as set forth in the preceding sections.
To this end, let $\Psi\colon A\to\Bc(\Hc_0)$ be a completely positive map with the Stinespring dilation $\lambda\colon A\to\Bc(\Hc)$ given by the equation
\begin{equation}\label{dilation}
\Psi(a)=V^*\lambda(a)V \quad (a\in A),
\end{equation}
and satisfying the minimality condition $\Hc=\overline{\spann}(\lambda(A)\Hc_0)$, 
where $V\colon\Hc_0\to\Hc$ is an isometry.

First of all, it is clear that setting $K^\Psi(s,t):=\Psi(s^{-1}t)$ and 
$K^\lambda(s,t):=\lambda(s^{-1}t)$, for all $s,t\in\U_A$, we get $K^\Psi$ and $K^\lambda$ two reproducing kernels for the trivial vector bundles $\U_A\times\Hc_0\to\U_A$ and 
$\U_A\times\Hc\to\U_A$, respectively. 
Moreover, the mapping 
$\Theta_V=(\id_{\U_A}\times V,\id_{\U_A})$ is a morphism between the two preceding vector bundles, for which the equality 
$\Psi(a)=V^*\lambda(a)V$ ($a\in A$) is equivalent to the fact that $K^\Psi$ 
is the pullback kernel of $K^\lambda$ through $\Theta_V$. 
That is,
$$  
\Theta_V^*K^\lambda=K^\Psi.
$$
As regards classifying morphisms, first recall that 
$V^*V=\id_\Hc$, $VV^*=P_{V(\Hc_0)}$. 
Put $\Sc_0:=V(\Hc_0)$. 
Then the natural kernel, associated with $\lambda$, for the bundle $\U_A\times \Sc_0\to\U_A$, is
$$  
K_0^\lambda(s,t):=P_{\Sc_0}\lambda(s^{-1}t)\iota_{\Sc_0} \quad (s,t\in\U_A).
$$
Incidentally, note that the equality (\ref{dilation}) can be written alternatively as
\begin{equation}\label{compodilat}
\Psi(a)=V^*P_{\Sc_0}\lambda(a)\iota_{\Sc_0}V  \quad (a\in A),
\end{equation}
since 
$$
\Psi(a)=V^*\lambda(a)V
=(V^*V)V^*\lambda(a)V(V^*V)
=V^*P_{\Sc_0}\lambda(a)\iota_{\Sc_0}V
$$
for $a\in A$.
Let $\xi=(s,h)$ be in the fiber $\{s\}\times \Sc_0$. Then for all $t\in\U_A$,
$$
(K_0^\lambda)_\xi(t)
=K_0^\lambda(t,s)(s,h)\equiv(P_{\Sc_0}\lambda(t^{-1}s)\iota_{\Sc_0})(s,h)
=P_{\Sc_0}\lambda(t^{-1})\lambda(s)h
$$
whence it follows that $(K_0^\lambda)_\xi$ can be identified with $\lambda(s)h$ as the function acting on $t$ as above. Thus the classifying morphism $\Delta_\lambda$ for the kernel 
$K_0^\lambda$ is 
$$
\begin{CD}
(s,h)\in \U_A\times \Sc_0 @>{\delta_\lambda}>> (\lambda(s)\Sc_0,
\lambda(s)h)\in\Tc_{\Sc_0}(\Hc) \\
@V{p_{\Sc_0}}VV @VV{\Pi_{\Hc,\Sc_0}}V \\
s\in \U_A @> {\zeta_\lambda}>> \lambda(s)\Sc_0\in\Gr_{\Sc_0}(\Hc)\,.
\end{CD}
$$

As a matter of fact, on account that the transpose mapping of 
$(\delta_\lambda)_s\equiv\lambda(s)$ is equal to $\lambda(s^{-1})$ for each $s\in\U_A$, one obtains that
$\Delta_\lambda^*Q_{\Hc,\Sc_0}=K_0^\lambda$, as it had to be from the universal theorem for kernels.   

Thus the classifying morphism $\Delta_\Psi$ for $K^\Psi$ is
$$
\begin{CD}
\U_A\times\Hc_0 @>{\id_{\U_A}\times V}>> \U_A\times\Sc_0 @>{\delta_\lambda}>> \Tc_{\Sc_0}(\Hc) \\
@V{P_{\U_A}}VV @VV{P_{\U_A}}V @VV{\Pi_{\Hc,\Sc_0}}V \\
\U_A @>{\id_{\U_A}}>> \U_A @>{\zeta_\lambda}>> \Gr_{\Sc_0}(\Hc)\,,
\end{CD}
$$
and the universal theorem tells us that, for $s,t\in\U_A$,
$$ 
\begin{aligned}
\Psi(s^{-1}t)=K^\Psi(s,t)=\Delta_\Psi^*Q_{\Hc,\Sc_0}(s,t)
&
=(\Delta_\lambda\Theta_V)^*Q_{\Hc,\Sc_0}(s,t)\\
=\Theta_V^*\Delta_\lambda^*Q_{\Hc,\Sc_0}(s,t)
&
=\Theta_V^*K_0^\lambda(s,t)
=V^*P_{\Sc_0}\lambda(s^{-1}t)\iota_{\Sc_0}V.
\end{aligned}
$$
On the other  hand $A={\rm span}\,\U_A$, hence the latter equality is equivalent to \eqref{compodilat}. 
In other words, the Stinespring dilation theorem, summarized in the formula \eqref{dilation}, 
can be regarded as an instance of the universality theorem for reproducing kernels of vector bundles, 
in the sense of \cite{BG11}. 

We have shown that a completely positive map $\Psi$ can be viewed as a reproducing kernel, under the form 
$(s,t)\mapsto\Psi(s^{-1}t)$. 
Let us compute the connection and covariant derivative associated with the above interpretation.

For $f\in\Hc_0$, $a\in\Ug_A$ and $\Sc_0=V(\Hc_0)$,
$$
E_{p_{\Sc_0}}(\de\lambda(a))Vf=E_{p_{\Sc_0}}(\lambda(a))Vf
=p_{\Sc_0}(\lambda(a)Vf)=VV^*\lambda(a)Vf=V\Psi(a)f.
$$

Then, using the classifying quasimorphism $\Delta_\Psi$ and the corresponding 
pull-back operation for connections, we have that the natural connection on the bundle    
$\U_A\times\Hc_0\to\U_A$ for the kernel $\Psi$ is obtained as the composition
$$ 
\begin{aligned}
((s,a),(f,h))&\mapsto ((\lambda(s),\de\lambda(a)),(Vf,Vh))\\
&\mapsto((\lambda(s),0),(Vf,Vh+E_{p_{\Sc_0}}(\de\lambda(a)Vf))\\
&=((\lambda(s),0),(Vf,Vh+V\Psi(a)f))
\mapsto((s,0),(f,h+\Psi(a)f)).
\end{aligned}
$$
for every $s\in\U_A$, $a\in\Ug_A$, $f,h\in\Hc_0$. 
In other words, the completely positive map $\Psi$ can be regarded as a connection $\Phi_\Psi$ on the trivial bundle in the form of the correspondence $\Phi_\Psi\colon (f,h)\mapsto h+\Psi(a)f$.

%\smallbreak
To compute the covariant derivative $\nabla_\Psi$ of the connection $\Phi_\Psi$, 
note that there exists a surjective isometry 
$\iota_\lambda\colon\Hc\to\Hc^{K_0^\lambda}$ defined by $\iota_\lambda(h)=P_{\Sc_0}\lambda(\,\cdot\,)^{-1}h$. 
Then, using an argument similar to that one of Propositions \ref{homoconinv}~and~\ref{homogcovderiv}, 
we find that the covariant derivative associated to the kernel $K_0^\lambda$ is given by
$$
\nabla_\lambda\sigma_0(u,a)=\de\sigma_0(u,a)
+P_{\Sc_0}(\lambda(a)\sigma_0(u))
$$
for all $u\in\U_A$, $a\in\Ug_A$ and every section $\sigma_0\colon\U_A\to\Sc_0$ of the bundle 
$\U_A\times\Sc_0\to\U_A$.

Take now any section $\sigma\colon\U_A\to\Hc_0$ of the bundle $\U_A\times\Hc_0\to\U_A$ and put 
$\sigma_0:=V\sigma\colon\U_A\to\Sc_0$. Since $\de\sigma_0=V\de\sigma$ and $P_{\Sc_0}=VV^*$, 
by using Corollary \ref{deriv_cor} we obtain for $u\in\U_A$ and $a\in\Ug_A$, that
$$
\nabla_\Psi\sigma((u,a))=\de\sigma(u,a)+\Psi(a)\sigma(u). 
$$
The completely positive map $\Psi$ can thus be interpreted in terms of covariant derivatives.

\appendix

%%%%%%%%%%%%%%%%%%%%%%%%%
\section{On linear connections and their pull-backs}\label{Sect2}
%%%%%%%%%%%%%%%%%%%%%%%%%%
%%%%%%%%%%%%%%%%%%%%%%%%%%

For the reader's convenience, we record here some general facts on connections on 
Banach fiber bundles that are needed in the present paper. 
We use \cite{KM97a} and \cite{La01} as the main references, 
but we will also provide proofs for some results where 
we were unable to find convenient references in the literature.

%%%%%%%%%%%%%%%%%%%%%%
\subsection{Connections on fiber bundles}%\label{sect1}
%%%%%%%%%%%%%%%%%%%%%
%%%%%%%%%%%%%%%%%%%%

\begin{definition}\label{def11}
\normalfont
Let $\varphi\colon M\to Z$ a fiber bundle and consider both vector bundle structures of the tangent space $TM$:
\begin{itemize}
\item[$\bullet$] $\tau_M\colon TM\to M$, the tangent bundle of the total space $M$.
\item[$\bullet$] $T\varphi\colon TM\to TZ$, the tangent map of $\varphi$.
\end{itemize}
A {\it connection} on the bundle $\varphi\colon M\to Z$ is a smooth map
$\Phi\colon TM\to TM$ with the following properties:
\begin{itemize}
\item[{\rm(i)}] $\Phi\circ\Phi=\Phi$;
\item[{\rm(ii)}] the pair $(\Phi,\id_M)$ is an endomorphism of the bundle
$\tau_M\colon TM\to M$;
\item[{\rm(iii)}] for every $x\in M$, if we denote $\Phi_x:=\Phi|_{T_xM}\colon
T_xM\to T_xM$, then we have $\Ran(\Phi_x)=\Ker(T_x\varphi)$,
so that we get an exact sequence
$$
0\rightarrow H_xM\hookrightarrow T_xM\mathop{\longrightarrow}\limits^{\Phi_x}
T_xM
\mathop{\longrightarrow}\limits^{T_x\varphi} T_{\varphi(x)}Z
\rightarrow 0.
$$
\end{itemize}
Here $H_xM:=\Ker(\Phi_x)$ is a closed linear subspace of $T_xM$ called the space of {\it horizontal vectors} at
$x\in M$. Similarly, the space of {\it vertical vectors} at $x\in M$ is $\Vc_xM:=\Ker(T_x\varphi)$. Then we have the direct sum decomposition $T_xM=H_xM\oplus\Vc_xM$, for every $x\in M$ (cf. \cite[subsect. 37.2]{KM97a}).
\end{definition}

We consider in this paper two special types of connections.
\begin{enumerate}
\item If $\varphi\colon M\to Z$ is a principal bundle with structure group $G$ acting to the right on $M$ by
$$
(x,g)\mapsto \mu_g(x)=\mu(x,g),\ M\times G\to M
$$
then a connection $\Phi$ on $\varphi\colon M\to Z$ is called {\it principal} whenever it is $G$-equivariant, that is,
$$
T( \mu_g)\circ\Phi=\Phi\circ T( \mu_g)
$$
for all $g\in G$ (cf. \cite[subsect. 37.19]{KM97a}).

\item
If $\varphi\colon M\to Z$ is a vector bundle then a connection $\Phi$ on $\varphi\colon M\to Z$ is called {\it linear} if the pair $(\Phi, \id_{TZ})$ is an endomorphism of the vector bundle
$T\varphi\colon TM\to TZ$ (i.e., if $\Phi$ is linear on the fibers of the bundle $T\varphi$); see \cite[subsect. 37.27]{KM97a}.
\end{enumerate}
We are interested in particular in vector bundles constructed out of principal ones. 
Recall how they appear:
Let $\pi\colon\Pc\to Z$ be a principal Banach bundle
with the structure Banach-Lie group $G$ and
the action $\mu\colon \Pc\times G\to \Pc$.
Assume that $\rho\colon G\to\Bc(\textbf{E})$ is
a smooth representation of $G$ by linear operators on
a Banach space $\textbf{E}$, and denote by
$$
[(p,e)]\mapsto\pi(p),\quad\Pi\colon D=\Pc\times_G\textbf{E}\to Z 
$$
the \textit{associated vector bundle}
(see \cite[subsect. 6.5]{Bo67} and \cite[subsect. 37.12]{KM97a}).
Here $\Pc\times_G\textbf{E}$ denotes the quotient of
$\Pc\times\textbf{E}$ with respect to the equivalence relation defined by
$$
(\forall g\in G)\quad (p,e)\sim(\mu(p,g),\rho(g^{-1})e)=:\bar{\mu}(g)(p,e)
$$
whenever $(p,e)\in \Pc\times\textbf{E}$,
and we denote by $[(p,e)]$ the equivalence class of any pair $(p,e)$.

In this way, $\Pi\colon\Pc\times_G\textbf{E}\to Z$ is a vector $G$-bundle.

\begin{remark}\label{conninduc}
\normalfont
Every connection on a principal bundle $\pi$ induces a linear connection on any vector bundle associated to $\pi$.   
A good reference for that induction procedure in infinite dimensions is \cite{KM97a}. 
We will recall here the corresponding construction 
since we will need it in order to describe specific induced connections 
later on (see for instance the comment prior to Theorem~\ref{conred} below).

For a Banach-Lie group $G$ with the Lie algebra $\gg=T_{\1}G$ 
let $\lambda_g\colon G\to G$, $\lambda_g(h)=gh$ for all $g,h\in G$. 
Then the mapping $(g,X)\mapsto T_{\1}(\lambda_g)X$ 
is a diffeomorphism $G\times\gg\to TG$, 
and thus 
the tangent manifold $TG$ is endowed with structure of a semidirect product of groups 
$TG\equiv G \ltimes_{\Ad_{G}}\Gg$ defined by the adjoint action of $G$ on $\Gg$; 
see \cite[Cor. 38.10]{KM97a}. 
The
multiplication in the group $TG$ is given by 
$$
(g_1,X_1)(g_2,X_2)=(g_1 g_2,\Ad_{G}(g_2^{-1})X_1+X_2),
\quad (g_1,g_2\in G; X_1,X_2\in\Gg).
$$
Let $\pi\colon \Pc\to Z$ be a principal bundle with the structure group $G$ acting to the right by  
$\mu\colon \Pc\times G\to \Pc$. 
If $\rho\colon G\to \Bc(\bf E)$ is a smooth representation as above, 
then we can form the associated vector bundle $\Pi\colon D=\Pc\times_{G}\textbf{E}\to Z$. 

For describing a connection induced on $\Pi$, 
one needs a specific description of the tangent space 
of the total space $\Pc\times_{G}\textbf{E}$, and to this end one uses the fact that 
 the tangent functor commutes with the construction of associated bundles. 
In fact, the tangent bundle $T\pi\colon T\Pc\to TZ$ is a principal bundle with the structure 
group $TG=G\ltimes_{\Ad_{G}}\Gg$ and right action $T\mu\colon T\Pc\times TG\to T\Pc$  
(\cite[Th. 37.18(1)]{KM97a}). 
The representation $\rho$ gives a linear action $G\times\textbf{E}\to\textbf{E}$, 
and by computing the tangent map of that action it follows that 
the tangent map of the above representation can be viewed as the smooth representation
$T\rho\colon G\ltimes_{\Ad_{G}}\Gg\to\Bc(\textbf{E}\oplus\textbf{E})$, 
which is easily computed as 
$$(g,X)\mapsto 
\begin{pmatrix}
\rho(g)&0\cr 0 & \rho(g)\end{pmatrix}
\begin{pmatrix} 1& 0 \cr d\rho(X)  & 1\end{pmatrix}
=\begin{pmatrix} \rho(g)& 0 \cr \rho(g) d\rho(X) & \rho(g)
\end{pmatrix},
$$
where the resulting matrix is to be understood as acting on vectors of $\textbf{E}\oplus\textbf{E}$ written in column form.
Using the representation $T\rho$, the tangent bundle  
of the vector bundle $\Pi\colon D=\Pc\times_{G}\textbf{E}\to Z$  
can be described as the vector bundle 
$$\tau_D\colon TD=T\Pc\times_{G\ltimes_{\Ad_{G}}\Gg}(\textbf{E}\oplus\textbf{E})
\to \Pc\times_{G}\textbf{E}=D,$$
which is associated to the principal bundle $T\pi\colon T\Pc\to TZ$ and is defined by 
$$
\tau_D\colon [(v_p,(f,h))]\mapsto [(p,f)] \qquad (v_p\in T_p\Pc; f,h\in\bf E)
$$ 
(\cite[Th. 37.18(4)]{KM97a}).

If now $\Phi\colon T\Pc\to T\Pc$ is a principal connection on the principal bundle 
$\pi\colon \Pc\to Z$,   
%For 
%$v=(x,v_x)\in TD$, 
%$g\in G$ and $X\in\Gg$ we have 
%that $(g,X)=T_1(g\,\cdot\,)X$ in $TG$, and
%$$T_g\mu(x,\cdot)T_1(\lambda_g)X=T_1\mu(\mu(x,g),\cdot)X.$$
%Moreover, the fundamental vector field $T_1\mu(\mu(x,g),\cdot)X$ 
%defined by $X$ at $\mu(x,g)$ belongs to the vertical subbundle of $T\Pc$ (see 
%\cite[Th. 37.18(5)]{KM97a}), hence $\Phi(T_1\mu(\mu(x,g),\cdot)X)
%=T_1\mu(\mu(x,g),\cdot)X$.
then the mapping
$\Phi\times\id_{T\textbf{E}}\colon T\Pc\times T\textbf{E}\to T\Pc\times T\textbf{E}$ is
$TG$-equivariant and 
%, moreover, for every $f,h\in\textbf{E}$, one gets 
%$$\begin{aligned}
%(\Phi\times\id_{T\textbf{E}})&
%(T\mu(v,T_1(g\ \cdot\ )X,T\rho(g,X)^{-1})(f,h)),\cr
%&=(\Phi(T_g\mu(x,\cdot)T_1(g\ \cdot\ )X+T\mu_g(v_x)),T\rho(g,X)^{-1}(f,h))\cr
%&=(\Phi(T_1\mu(x\cdot g,\ \cdot\ )X)+\Phi(T\mu_g(v_x),T\rho(g,X)^{-1})(f,h))\cr
%&=(T_1\mu(x\cdot g,\ \cdot\ )X+T\mu_g(\Phi(v_x)),T\rho(g,X)^{-1})(f,h)).
%\end{aligned} $$
%This implies that $\Phi\times\id_{T\textbf{E}}$ 
factorizes through a map
$\bar\Phi\colon T(\Pc\times_G\textbf{E})\to T(\Pc\times_G\textbf{E})
=T\Pc\times_{TG}T\textbf{E}$. 
That is, there exists the commutative diagram 
$$
\begin{CD}
T\Pc\times T\textbf{E} @>{\Phi\times\id_{T\textbf{E}}}>> T\Pc\times T\textbf{E} \\
@V{ }VV @VV{ }V \\
T\Pc\times_{TG}T\textbf{E} @>{\bar\Phi}>> T\Pc\times_{TG}T\textbf{E}
\end{CD}
$$
and $\bar\Phi$ is the connection \textit{induced}  by $\Phi$ on $T(\Pc\times_G\textbf{E})$;   
see \cite[subsect. 37.24]{KM97a}.
\end{remark}

%For specific purposes it is often useful to be able to compute the covariant derivative associated with 
%a linear connection on a vector bundle. 
We now briefly recall the covariant derivatives (as in \cite{Vi67}) and we then provide a proposition 
needed in the specific computations carried out in the present paper.

Let $\Pi\colon D\to Z$ be a vector bundle with a linear connection $\Phi\colon TD\to TD$. 
Let $\Vc D=\Ker(T\Pi)$ ($\subseteq TD$) be the vertical part of the tangent bundle $\tau_D\colon TD\to D$. 
A useful description of $\Vc D$ can be obtained by considering the fibered product 
$D\fimes_Z D:=\{(x_1,x_2)\in D\times D\mid\Pi(x_1)=\Pi(x_2)\}$ 
along with the natural maps $r_j\colon D\fimes_Z D\to D$, $r_j(x_1,x_2)=x_j$ for $j=1,2$. 
Define for every $(x_1,x_2)\in D\fimes_Z D$ the path 
$c_{x_1,x_2}\colon\R\to D$, $c_{x_1,x_2}(t)=x_1+tx_2$. 
Then it is easily seen that we have a well-defined diffeomorphism 
$\varepsilon\colon D\fimes_Z D\to \Vc D,\quad \varepsilon(x_1,x_2)=\dot c_{x_1,x_2}(0)\in T_{x_1} D$, 
which is in fact an isomorphism between the vector bundles 
$r_1\colon D\fimes_Z D\to D$ and $\tau_D\vert_{\Vc D}\colon \Vc D\to D$. 
We then get a natural mapping 
$r:=r_2\circ\varepsilon^{-1} \colon\Vc D\to D$ and the pair 
$(r,\Pi)$ is a homomorphism of vector bundles from $\tau_D\vert_{\Vc D}\colon \Vc D\to D$ 
to $\Pi\colon D\to Z$. 

Next let $\Omega^1(Z,D)$ the space of locally defined 
smooth differential 1-forms on $Z$ with values in the bundle $\Pi\colon D\to Z$,  
hence the set of smooth mappings $\eta\colon\tau_Z^{-1}(Z_\eta)\to D$, 
where $\tau_Z\colon TZ\to Z$ is the tangent bundle and  $Z_\eta$ is a suitable open subset of $Z$,  
such that for every $z\in Z_\eta$ we have a bounded linear operator 
$\eta_z:=\eta\vert_{T_zZ}\colon T_zZ\to D_z=\Pi^{-1}(z)$. 
(So the pair $(\eta,\id_Z)$ is a homomorphism of vector bundles 
from the tangent bundle $\tau_D\vert_{Z_\eta}$ to the bundle~$\Pi$.)
For the sake of simplicity we actually omit the subscript $\eta$ 
in $Z_\eta$, as if the forms were always defined throughout $Z$; 
in fact, the algebraic operations are performed on the intersections of the domains, 
and so on.  
Similarly, we let $\Omega^0(Z,D)$ be the space of locally defined smooth sections of the vector bundle~$\Pi$. 

\begin{definition}\label{deriv_def}
\normalfont
The \emph{covariant derivative} for the linear connection $\Phi$ 
is the linear mapping  
$\nabla\colon\Omega^0(Z,D)\to\Omega^1(Z,D)$, defined for every $\sigma\in\Omega^0(Z,D)$ 
by the composition 
$$\nabla\sigma\colon  
TZ\mathop{\longrightarrow}\limits^{T\sigma} TD\mathop{\longrightarrow}\limits^{\Phi} 
\Vc D\mathop{\longrightarrow}\limits^{r}D$$
that is, $\nabla\sigma=(r\circ\Phi)\circ T\sigma$. 
(The composition $r\circ\Phi$ is the so-called \emph{connection map}.)
\end{definition}

\begin{proposition}\label{deriv_prop1}
Let $\Pi\colon D\to Z$ and $\widetilde{\Pi}\colon\widetilde{D}\to\widetilde{Z}$ be vector bundles 
endowed with the linear connections $\Phi$ and $\widetilde{\Phi}$, 
with the corresponding covariant derivatives $\nabla$ and $\widetilde{\nabla}$, 
respectively. 
Assume that $\Theta=(\delta,\zeta)$ is a homomorphism of vector bundles from $\Pi$ into $\widetilde{\Pi}$ 
such that $T\delta\circ\Phi=\widetilde{\Phi}\circ T\delta$. 
If $\sigma\in\Omega^0(Z,D)$ and $\widetilde{\sigma}\in\Omega^0(\widetilde{Z},\widetilde{D})$ are 
such that $\delta\circ\sigma=\widetilde{\sigma}\circ\zeta$, 
then  $\delta\circ\nabla\sigma=\widetilde{\nabla}\widetilde{\sigma}\circ T\zeta$. 
\end{proposition}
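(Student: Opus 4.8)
The plan is to unwind both covariant derivatives by means of Definition~\ref{deriv_def}, writing $\nabla\sigma=r\circ\Phi\circ T\sigma$ and $\wt\nabla\wt\sigma=\wt r\circ\wt\Phi\circ T\wt\sigma$, where $r\colon\Vc D\to D$ and $\wt r\colon\Vc\wt D\to\wt D$ are the natural maps recalled before Definition~\ref{deriv_def}. The identity to be proved, $\delta\circ\nabla\sigma=\wt\nabla\wt\sigma\circ T\zeta$, then splits into three independent ingredients: (i) the functoriality of the tangent operator together with the hypothesis $\delta\circ\sigma=\wt\sigma\circ\zeta$, which upon applying $T$ gives $T\delta\circ T\sigma=T\wt\sigma\circ T\zeta$; (ii) the compatibility hypothesis $T\delta\circ\Phi=\wt\Phi\circ T\delta$; and (iii) the intertwining of the canonical vertical identifications by $\delta$, namely $\wt r\circ(T\delta)\vert_{\Vc D}=\delta\circ r$. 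I expect ingredient (iii) to be the only non-formal point.

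To establish (iii), I would return to the explicit construction $r=r_2\circ\varepsilon^{-1}$. Fix $v\in\Vc_{x_1}D$ and write $\varepsilon^{-1}(v)=(x_1,x_2)$ with $x_1,x_2\in D_z$, $z=\Pi(x_1)$, so that $v=\dot c_{x_1,x_2}(0)$ for the path $c_{x_1,x_2}(t)=x_1+tx_2$ and $r(v)=x_2$. Since $\delta$ is a vector bundle homomorphism, the fiber map $\delta_z\colon D_z\to\wt D_{\zeta(z)}$ is linear, hence $\delta\circ c_{x_1,x_2}=c_{\delta_z(x_1),\delta_z(x_2)}$ is again a straight-line path inside the fiber $\wt D_{\zeta(z)}$. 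Differentiating at $t=0$ yields $T\delta(v)=\wt\varepsilon(\delta_z(x_1),\delta_z(x_2))$, which in particular shows that $T\delta$ carries $\Vc D$ into $\Vc\wt D$; applying $\wt r$ then gives $\wt r(T\delta(v))=\delta_z(x_2)=\delta(r(v))$, which is exactly (iii).

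With these three facts in hand, the assembly is a short chain of substitutions:
\begin{align*}
\wt\nabla\wt\sigma\circ T\zeta
&=\wt r\circ\wt\Phi\circ T\wt\sigma\circ T\zeta
=\wt r\circ\wt\Phi\circ T\delta\circ T\sigma \\
&=\wt r\circ T\delta\circ\Phi\circ T\sigma
=\delta\circ r\circ\Phi\circ T\sigma
=\delta\circ\nabla\sigma,
\end{align*}
where the first line uses (i), the passage to the second line uses (ii), and the penultimate equality uses (iii) together with the fact that $\Phi\circ T\sigma$ takes values in $\Vc D$ (because $\Ran\Phi_x=\Vc_x D$ by Definition~\ref{def11}(iii), so that the restriction $(T\delta)\vert_{\Vc D}$ may legitimately be invoked). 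The main obstacle is thus concentrated entirely in step (iii); once the compatibility of $\delta$ with the canonical vertical identification $\varepsilon$ is recorded, the remainder is formal manipulation of the tangent functor.
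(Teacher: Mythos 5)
Your proposal is correct and follows essentially the same route as the paper's proof: the only non-formal step is the intertwining $\widetilde{r}\circ T\delta=\delta\circ r$ on vertical vectors, which you establish exactly as the paper does, via the fiberwise linearity of $\delta$ applied to the straight-line paths $c_{x_1,x_2}$ defining $\varepsilon$, followed by the same chain of substitutions. No gaps.
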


\begin{proof}
First let $r\colon \Vc D\to D$ and $\widetilde{r}\colon\Vc\widetilde{D}\to\widetilde{D}$ be the natural mappings 
and note that 
\begin{equation}\label{deriv_prop1_proof_eq1}
\delta\circ r=\widetilde{r}\circ T\delta.
\end{equation} 
In order to see why this equality holds true we need the mapping 
$\delta\fimes_Z\delta\colon D\fimes_Z D\to\widetilde{D}\fimes_{\widetilde{Z}}\widetilde{D}$ 
given by $(\delta\fimes_Z\delta)(x_1,x_2)=(\delta(x_1),\delta(x_2))$, 
which is well defined since $\widetilde{\Pi}\circ\delta=\zeta\circ\Pi$.  
Since $\delta$ is fiberwise linear, it follows that with the notation of Definition~\ref{deriv_def} 
we have $\delta\circ c_{x_1,x_2}=c_{\delta(x_1),\delta(x_2)}\colon\R\to\widetilde{D}$ 
for all $(x_1,x_2)\in D\fimes_Z D$. 
By taking the velocity vectors at $0\in\R$ for these paths we get 
$T\delta\circ\varepsilon=\widetilde{\varepsilon}\circ(\delta\fimes_Z\delta)\colon D\fimes_Z D\to T\widetilde{D}$. 
Therefore 
$\widetilde{\varepsilon}^{-1}\circ T\delta=(\delta\fimes_Z\delta)\circ\varepsilon^{-1}$ 
and then, by using the obvious equality 
$\widetilde{r}_2\circ(\delta\fimes_Z\delta)=\delta\circ r_2\colon D\fimes_Z D\to\widetilde{D}$, 
we get 
$$\widetilde{r}\circ T\delta
=\widetilde{r}_2\circ \widetilde{\varepsilon}^{-1}\circ T\delta
=\widetilde{r}_2\circ(\delta\fimes_Z\delta)\circ\varepsilon^{-1}
=\delta\circ r_2\circ\varepsilon^{-1}
=\delta\circ r $$
hence \eqref{deriv_prop1_proof_eq1} holds true. 

We now come back to the proof of the assertion. 
By using \eqref{deriv_prop1_proof_eq1} 
and the equality $T\delta\circ\Phi=\widetilde{\Phi}\circ T\delta$ 
we get 
\begin{equation}\label{deriv_prop1_proof_eq2}
\delta\circ(r\circ\Phi)=\widetilde{r}\circ T\delta\circ\Phi =(\widetilde{r}\circ\widetilde{\Phi})\circ T\delta. 
\end{equation}
On the other hand we have $\delta\circ\sigma=\widetilde{\sigma}\circ\zeta$, 
and therefore $T\delta\circ T\sigma=T\widetilde{\sigma}\circ T\zeta$. 
We then get 
$$\widetilde{\nabla}\widetilde{\sigma}\circ T\zeta
=(\widetilde{r}\circ\widetilde{\Phi})\circ T\widetilde{\sigma}\circ T\zeta
=(\widetilde{r}\circ\widetilde{\Phi})\circ T\delta\circ T\sigma
=\delta\circ(r\circ\Phi)\circ T\sigma
=\delta\circ\nabla\sigma
$$
where the next-to-last equality follows by \eqref{deriv_prop1_proof_eq1}, and this completes the proof. 
\end{proof}

%%%%%%%%%%%%%%%%%%%%%%%%%%%%%%%%%
\subsection{Pull-backs of connections}\label{sect2}
%%%%%%%%%%%%%%%%%
%%%%%%%%%%%%%%%%

Pull-backs of connections on various types of finite-dimensional bundles
have been studied in several papers; see for instance
\cite{NR61}, \cite{NR63},
\cite{Le68},
\cite{Sch80}, 
\cite{PR86}.
%, \cite{Am87}, \cite{Am93}, \cite{AL03}.
We now establish a result (Proposition~\ref{prop13}) that belongs to that circle of ideas
and is appropriate for the applications we want to make
in infinite dimensions. 
Unlike the descriptions of the pull-backs of connections that we were able to find in the literature, 
the method provided here is more direct in the sense that it requires 
neither the connection map, nor any connection forms, nor the covariant derivative, 
but rather the connection itself. 
The intertwining property of the covariant derivatives follows at once 
(Corollary~\ref{deriv_cor}).

We will need the following simple lemma.

\begin{lemma}\label{lemacon}
Let $T\colon{\Ec}\to\widetilde{\Ec}$ be a continuous (conjugate-)linear operator between two Banach spaces $\Ec$ and $\widetilde{\Ec}$. 
Let us assume that there are two closed linear subspaces $\Fc\subset\Ec$ and $\widetilde{\Fc}\subset\widetilde{\Ec}$ such that:
\begin{itemize}
\item[{\rm(i)}]  the operator $T$ induces a (conjugate-)linear isomorphism $T|_{\Fc}\colon\Fc\to\widetilde{\Fc}$;
\item[{\rm(ii)}] $\Ran\widetilde{P}=\widetilde{\Fc}$, for some projection
$\widetilde P\colon\widetilde{\Ec}\to\widetilde{\Ec}$.
\end{itemize}
Then there exists a unique projection $P\in\End(\Ec)$ such that
$\Ran P=\Fc$ and ${\widetilde P}\circ T=T\circ P$.
%the diagram
%$$\begin{CD}
%\Ec @>{T}>> \widetilde{\Ec} \\
%@V{P}VV @VV{\widetilde P}V \\
%\Ec @>{T}>> \widetilde{\Ec}
%\end{CD}$$
%is commutative.
\end{lemma}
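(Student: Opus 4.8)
The plan is to write down an explicit formula for $P$ and then verify each required property directly. Since $T|_{\Fc}\colon\Fc\to\widetilde{\Fc}$ is by hypothesis~(i) a continuous bijection between Banach spaces (both $\Fc$ and $\widetilde{\Fc}$ being closed subspaces of Banach spaces, hence Banach in their own right), the open mapping theorem guarantees that its inverse $(T|_{\Fc})^{-1}\colon\widetilde{\Fc}\to\Fc$ is continuous. I would then set
$$
P:=(T|_{\Fc})^{-1}\circ\widetilde{P}\circ T.
$$
Because $\Ran\widetilde{P}=\widetilde{\Fc}$ by~(ii), the map $\widetilde{P}\circ T$ takes values in $\widetilde{\Fc}$, so the composition is well defined and $P$ sends $\Ec$ into $\Fc$. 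Continuity of $P$ is immediate from continuity of its three factors, and a short bookkeeping shows $P$ is genuinely linear: whether $T$ is linear or conjugate-linear, the conjugation carried by $T$ is matched by the conjugation carried by $(T|_{\Fc})^{-1}$, so the two cancel and $P$ is linear in either case.

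Next I would check that $P$ restricts to the identity on $\Fc$: for $x\in\Fc$ one has $Tx=(T|_{\Fc})x\in\widetilde{\Fc}$, hence $\widetilde{P}(Tx)=Tx$ since $\widetilde{P}$ fixes its range, and applying $(T|_{\Fc})^{-1}$ returns $x$. This single computation yields at once that $\Ran P=\Fc$ and that $P$ is idempotent (every $Px$ lies in $\Fc$, hence is fixed by $P$), so $P$ is a projection with range $\Fc$. For the intertwining relation I would invoke the complementary identity $T\circ(T|_{\Fc})^{-1}=\id_{\widetilde{\Fc}}$: as $\widetilde{P}\circ T$ already lands in $\widetilde{\Fc}$, it follows that
$$
T\circ P=T\circ(T|_{\Fc})^{-1}\circ\widetilde{P}\circ T=\widetilde{P}\circ T,
$$
which is precisely the desired relation $\widetilde{P}\circ T=T\circ P$.

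Finally, for uniqueness I would suppose $P'$ is any projection with $\Ran P'=\Fc$ satisfying $\widetilde{P}\circ T=T\circ P'$. Then $T\circ P=T\circ P'$, and for every $x\in\Ec$ both $Px$ and $P'x$ lie in $\Fc$, where $T$ coincides with the injective map $T|_{\Fc}$; injectivity then forces $Px=P'x$, so $P'=P$. I do not expect a genuine obstacle here: the only point demanding care is that $T$ need not be injective on all of $\Ec$, so the uniqueness argument must exploit that the relevant vectors already sit in $\Fc$, where $T$ is one-to-one. The open mapping theorem (to secure a bounded inverse) and the linear/conjugate-linear bookkeeping are the only places where slight attention is needed.
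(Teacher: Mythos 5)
Your proposal is correct and follows essentially the same route as the paper: the same explicit formula $P=(T|_{\Fc})^{-1}\circ\widetilde{P}\circ T$, the same verification that $P|_{\Fc}=\id_{\Fc}$ yields idempotence and the range condition, and the same uniqueness argument via injectivity of $T|_{\Fc}$. Your added remarks on the open mapping theorem and the conjugate-linear bookkeeping are sensible elaborations of details the paper leaves implicit.
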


\begin{proof}
{\it Existence:} Define
\begin{equation}\label{defproy}
P:=(T|_{\Fc})^{-1}\circ\widetilde P\circ T\in\End(\Ec)
\end{equation}
It is clear that $\Ran P=\Fc$ and moreover $P|_{\Fc}=\hbox{id}_{\Fc}$, hence
$P\circ P=P$. Then the
commutativity of the diagram is satisfied by the construction of $P$.

{\it Uniqueness:} Assume that $P_1\in\End(\Ec)$ is another operator satisfying the properties of the statement. Then for arbitrary $x\in\Ec$ we have
$T(P_1x)=\widetilde{P} Tx=T(Px)$. 
Since $P_1x,Px\in\Fc$ and $T|_{\Fc}\colon\Fc\to\widetilde{\Fc}$ is an isomorphism, it then follows that
$P_1x=Px$. Thus $P_1=P$ and we are done.
\end{proof}

\begin{proposition}\label{prop13}
Let $\varphi\colon M\to Z$ and $\widetilde\varphi\colon \widetilde M\to \widetilde Z$ 
be fiber bundles modeled on Banach spaces, and let $\Theta=(\delta,\zeta)$ be a bundle homomorphism, that is, the diagram
$$
\begin{CD}
M @>{\delta}>> \widetilde M \\
@V{\varphi}VV @VV{\widetilde\varphi}V \\
Z @>{\zeta}>> \widetilde Z
\end{CD}
$$
is commutative and both $\delta$ and $\zeta$ are smooth.
In addition, assume that for every $s\in Z$ the mapping $\delta$ induces a diffeomorphism of the fiber
$M_s:=\varphi^{-1}(\{s\})$ onto the fiber $\widetilde{M}_{\zeta(s)}
:=\widetilde\varphi^{-1}(\zeta(s))$.

Then for every connection $\widetilde\Phi$ on the bundle $\widetilde\varphi\colon\widetilde M\to\widetilde Z$ 
there exists a unique connection $\Phi$ on the bundle
$\varphi\colon M\to Z$ such that the diagram
$$
\begin{CD}
TM @>{T\delta}>> T\widetilde M \\
@V{\Phi}VV @VV{\widetilde\Phi}V \\
TM @>{T\delta}>> T\widetilde M
\end{CD}
$$
is commutative.

Moreover, if both $\varphi\colon M\to Z$ and $\widetilde\varphi\colon\widetilde M\to\widetilde Z$ are principal (vector) bundles, 
the pair $\Theta=(\delta,\zeta)$ is a homomorphism of principal bundles (or of vector bundles, and in this case $\delta$ can be linear) bundles, and $\widetilde\Phi$ is a principal (linear or conjugate-linear) connection, then so is $\Phi$.
\end{proposition}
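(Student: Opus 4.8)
The plan is to build $\Phi$ fiberwise by applying Lemma~\ref{lemacon} at each point and then to check that the resulting family is a smooth connection with the required properties. Fix $x\in M$ and set $s=\varphi(x)$. I would apply Lemma~\ref{lemacon} with $\Ec=T_xM$, $\widetilde\Ec=T_{\delta(x)}\widetilde M$, operator $T=T_x\delta$, subspaces $\Fc=\Ker(T_x\varphi)=\Vc_xM$ and $\widetilde\Fc=\Ker(T_{\delta(x)}\widetilde\varphi)=\Vc_{\delta(x)}\widetilde M$, and projection $\widetilde P=\widetilde\Phi_{\delta(x)}$. Hypothesis~(i) of that lemma holds because $\delta$ restricts to a diffeomorphism of the fiber $M_s$ onto $\widetilde M_{\zeta(s)}$, so its tangent map restricts to a linear isomorphism from $T_xM_s=\Vc_xM$ onto $T_{\delta(x)}\widetilde M_{\zeta(s)}=\Vc_{\delta(x)}\widetilde M$; hypothesis~(ii) is precisely axiom~(iii) of Definition~\ref{def11} applied to $\widetilde\Phi$, namely $\Ran\widetilde\Phi_{\delta(x)}=\Vc_{\delta(x)}\widetilde M$. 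Lemma~\ref{lemacon} then produces a unique projection $\Phi_x\in\End(T_xM)$ with $\Ran\Phi_x=\Vc_xM$ and $\widetilde\Phi_{\delta(x)}\circ T_x\delta=T_x\delta\circ\Phi_x$, which read fiberwise is exactly the commutativity of the square in the statement.

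Setting $\Phi:=(\Phi_x)_{x\in M}$, axioms (i) and (iii) of Definition~\ref{def11} are immediate from Lemma~\ref{lemacon} (each $\Phi_x$ is idempotent with range the vertical space), while axiom~(ii) holds because $\Phi$ covers $\id_M$ and is fiberwise linear. The one genuinely analytic point is smoothness. Here I would pass to local trivializations in which $\delta$ reads $(s,f)\mapsto(\zeta(s),\delta_s(f))$ with $\delta_s$ a diffeomorphism of the model fibers depending smoothly on $s$; the explicit formula~\eqref{defproy} exhibits $\Phi_x$ as $(T_x\delta|_{\Vc_xM})^{-1}\circ\widetilde\Phi_{\delta(x)}\circ T_x\delta$, and the first factor is the inverse of the smoothly varying fiber isomorphism $T_f\delta_s$, so smoothness of $\Phi$ follows from smoothness of $\delta$, of $\widetilde\Phi$, and of operator inversion on the open set of invertible operators between Banach spaces. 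This is the main obstacle, precisely because in the Banach setting one must keep the inverse $(T_x\delta|_{\Vc_xM})^{-1}$ under control; everything else is algebraic. Uniqueness of $\Phi$ as a connection is then immediate, since any connection making the square commute satisfies the hypotheses of Lemma~\ref{lemacon} at each point, and that lemma pins down $\Phi_x$ uniquely.

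For the final ``moreover'' assertion I would argue from the uniqueness just established. In the principal case, let $G$ act by $\mu$ and let $\Theta$ intertwine the actions through the underlying group homomorphism $\phi$, so that $\delta\circ\mu_g=\widetilde\mu_{\phi(g)}\circ\delta$. I would check that the conjugate $T\mu_g^{-1}\circ\Phi\circ T\mu_g$ is again a connection satisfying the same pull-back identity: using $T\delta\circ T\mu_g=T\widetilde\mu_{\phi(g)}\circ T\delta$ together with the $\widetilde G$-equivariance of $\widetilde\Phi$, one computes $T\delta\circ(T\mu_g^{-1}\circ\Phi\circ T\mu_g)=\widetilde\Phi\circ T\delta$, whence $T\mu_g^{-1}\circ\Phi\circ T\mu_g=\Phi$ by uniqueness, i.e.\ $\Phi$ is principal. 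In the vector bundle case, the (conjugate-)linearity of $\Phi$ with respect to the second vector bundle structure $T\varphi\colon TM\to TZ$ is read off directly from~\eqref{defproy}: when $\delta$ is fiberwise linear the map $T\delta$ is a linear morphism for that structure, and $\widetilde\Phi$ is linear or conjugate-linear by hypothesis, so the composition defining $\Phi_x$ inherits the same type; the identical conclusion also follows from uniqueness by conjugating $\Phi$ with the fiberwise scalar multiplications, exactly as in the principal case.
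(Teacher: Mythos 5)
Your proposal is correct and follows essentially the same route as the paper: apply Lemma~\ref{lemacon} fiberwise with $T=T_x\delta$, $\Fc=\Vc_xM$, $\widetilde\Fc=\Vc_{\delta(x)}\widetilde M$ and $\widetilde P=\widetilde\Phi_{\delta(x)}$ to get the unique idempotent $\Phi_x$, then verify smoothness in local trivializations via the explicit formula $\Phi_x=(T_x\delta|_{\Vc_xM})^{-1}\circ\widetilde\Phi_{\delta(x)}\circ T_x\delta$, invoking smoothness of operator inversion in the Banach setting. The paper dismisses the ``moreover'' clause as straightforward, whereas you supply a clean uniqueness-by-conjugation argument for it, but this is a matter of detail rather than of method.
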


\begin{proof}
We have for every $x\in M$ the continuous operator 
$T_x\delta\colon T_xM\to T_{\delta(x)}\widetilde M$ 
(which is either linear or conjugate-linear), and also the relations
$T_x(M_{\varphi(x)}) = \Vc_x\hookrightarrow T_xM$ and
$$
T_{\delta(x)}(\widetilde M_{\zeta(\varphi(x))})=T_{\delta(x)}(\widetilde M_{\widetilde\varphi(\delta(x))})
=\Vc_{\delta(x)}\widetilde M\hookrightarrow T_{\delta(x)}\widetilde M.
$$
Since $\delta |_{M_{\Pi(x)}}\colon M_{\Pi(x)}\to \widetilde M_{\zeta(\Pi(x))}$ is a diffeomorphism by hypothesis, 
it thus follows that the operator $T_x\delta$ induces a (conjugate-)linear isomorphism
$\Vc_xM\to \Vc_{\delta(x)}\widetilde M$. 
Now Lemma~\ref{lemacon} shows that 
there exists a unique idempotent operator
$\Phi_x\colon T_xM\to T_xM$ such that $\Ran\Phi_x=\Vc_xM$ and
$(T_x\delta)\circ\Phi_x=\widetilde\Phi_{\delta(x)}\circ(T_x\delta)$. 
In fact it is defined by
$$
\Phi_x:=(T_x\delta |_{\Vc_x M})^{-1}\circ\widetilde\Phi_{\delta(x)}\circ T_x\delta \qquad (x\in M).
$$
If we put together the operators $\Phi_x$ with $x\in M$, we get the map $\Phi\colon TM\to TM$ we were looking for. 
What still remains to be done is to check that $\Phi$ is smooth. 
Since this is a local property, we may assume that both bundles $\Pi$ and $\widetilde\Pi$ are trivial.
 Let $S$ and $\widetilde S$ be their typical fibers, respectively. 
 Then $M=Z\times S$ and $\widetilde M=\widetilde Z\times\widetilde S$, hence $TM=TM\times TS$ and
$T\widetilde M=T\widetilde Z\times T\widetilde S$. 
The fact that $\widetilde\Phi$ is a connection means that for every
$(\widetilde z,\widetilde k)\in\widetilde Z\times\widetilde S$ we have an idempotent operator
$\widetilde\Phi_{(\widetilde z, \widetilde k)}$ on
$T_{\widetilde z}\widetilde Z\times T_{\widetilde k}\widetilde S$ with
$\Ran\widetilde\Phi_{(\widetilde z, \widetilde k)}=\{0\}\times T_{\widetilde k}\widetilde S$.

Moreover, we have the smooth map $\delta\colon Z\times S\to \widetilde Z\times\widetilde S$ 
for which there exists a smooth map $d\colon Z\times S\to\widetilde S$ such that
$\delta(z, k)=(\zeta(z),d(z,k))$ for all $z\in Z$ and $k\in S$.
The hypothesis that $\delta$ is a fiberwise diffeomorphism is equivalent to the fact that 
for every $z\in Z$ we have the diffeomorphism $d(z,\ \cdot\ )\colon S\to \widetilde S$. 
It follows by (\ref{defproy}) that, for arbitrary $(z,k)\in Z\times S$,
$$
\Phi_{(z,k)}=T_k(d(z,\ \cdot\ ))^{-1}
\circ
\widetilde\Phi_{\delta(z,k)}
\circ T_{(z,k)}\delta
\in\End(T_z Z\times T_k S)
$$
which clearly shows that $\Phi\colon TZ\times TS\to TZ\times TS$ is smooth. 
(Note that the smoothness of the mapping $(z,k)\mapsto T_k(d(z,\ \cdot\ ))^{-1}$ 
is ensured by the fact that we are working with Banach manifolds.)

The remainder of the proof is straightforward.
\end{proof}

\begin{definition}\label{pullcon}
\normalfont
In the setting of Proposition \ref{prop13} we say that the connection $\Phi$ is the 
{\it pull-back of the connection} $\widetilde\Phi$ and we denote $\Phi=\Theta^*(\widetilde\Phi)$.
\end{definition}

\begin{corollary}\label{deriv_cor}
Let $\Pi\colon D\to Z$ and $\widetilde{\Pi}\colon\widetilde{D}\to\widetilde{Z}$ be vector bundles. 
Assume that $\Theta=(\delta,\zeta)$ is a homomorphism of vector bundles from $\Pi$ into $\widetilde{\Pi}$ 
such that  for every $s\in Z$ the mapping $\delta$ induces an isomorphism of the fiber
$D_s:=\Pi^{-1}(\{s\})$ onto the fiber 
$\widetilde{D}_{\zeta(s)}:=\widetilde\Pi^{-1}(\zeta(s))$. 
Consider any linear connection $\widetilde\Phi$ on the vector bundle $\Pi$ and 
its pull-back $\Phi=\Theta^*(\widetilde\Phi)$ on the vector bundle~$\widetilde{\Pi}$, 
with the corresponding covariant derivatives $\nabla$ and $\widetilde{\nabla}$, 
respectively. 
If we have $\sigma\in\Omega^0(Z,D)$ and $\widetilde{\sigma}\in\Omega^0(\widetilde{Z},\widetilde{D})$ 
such that $\delta\circ\sigma=\widetilde{\sigma}\circ\zeta$, 
then  $\delta\circ\nabla\sigma=\widetilde{\nabla}\widetilde{\sigma}\circ T\zeta$. 
\end{corollary}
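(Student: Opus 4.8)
The plan is to recognize that this corollary is nothing more than a packaging of the two immediately preceding results: the defining property of the pull-back connection supplied by Proposition~\ref{prop13} (recorded in Definition~\ref{pullcon}) furnishes exactly the intertwining hypothesis required by Proposition~\ref{deriv_prop1}. Recall that Proposition~\ref{deriv_prop1} asserts that whenever a bundle homomorphism $\Theta=(\delta,\zeta)$ satisfies $T\delta\circ\Phi=\widetilde\Phi\circ T\delta$ and two sections satisfy $\delta\circ\sigma=\widetilde\sigma\circ\zeta$, then $\delta\circ\nabla\sigma=\widetilde\nabla\widetilde\sigma\circ T\zeta$. Thus the only thing to verify is that the pull-back $\Phi=\Theta^*(\widetilde\Phi)$ does satisfy the intertwining identity $T\delta\circ\Phi=\widetilde\Phi\circ T\delta$.

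First I would observe that the standing hypothesis of the corollary --- namely that $\delta$ restricts to an isomorphism of each fiber $D_s$ onto $\widetilde D_{\zeta(s)}$ --- is precisely the hypothesis under which Proposition~\ref{prop13} applies. Hence that proposition produces the pull-back connection $\Phi=\Theta^*(\widetilde\Phi)$ on $\Pi$, and by its very construction (the commutative square of tangent maps appearing in the statement of Proposition~\ref{prop13}) this $\Phi$ obeys $T\delta\circ\Phi=\widetilde\Phi\circ T\delta$.

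With that identity in hand, I would simply invoke Proposition~\ref{deriv_prop1}. Since the sections are assumed to satisfy $\delta\circ\sigma=\widetilde\sigma\circ\zeta$, every hypothesis of that proposition is met, and its conclusion yields $\delta\circ\nabla\sigma=\widetilde\nabla\widetilde\sigma\circ T\zeta$, which is exactly the asserted equality. The argument is therefore two sentences long once the references are set up.

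There is essentially no genuine mathematical obstacle; the one point deserving care is purely notational bookkeeping. In the corollary the symbols are so labeled that $\widetilde\Phi$ is the given connection and $\Phi$ is its pull-back, and one must match these (together with the roles of the base and total spaces) against the conventions of Propositions~\ref{prop13} and~\ref{deriv_prop1} so that the intertwining square is oriented in the direction Proposition~\ref{deriv_prop1} expects. Once this identification is checked, the conclusion is immediate.
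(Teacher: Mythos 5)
Your proof is correct and is exactly the paper's own argument, which simply reads ``Use Propositions~\ref{prop13} and~\ref{deriv_prop1}''; you have merely spelled out the (correct) observation that the commutative square defining the pull-back connection is precisely the intertwining hypothesis of Proposition~\ref{deriv_prop1}.
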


\begin{proof}
Use Propositions  \ref{prop13} and \ref{deriv_prop1}. 
\end{proof}

\subsection*{Acknowledgment} 
We wish to thank Professor Joachim Hilgert for kindly sending over one of his papers upon our request, 
and Professor Radu Pantilie for pointing out useful references and facts on linear connecctions.

%%%%%%%%%%%%%%%%%%%%%%%%%%%%%%
%%%%%%%%%%%%%%%%%%%%%%

%\bigskip

\end{document}